\documentclass[]{siamltex}

\usepackage{graphicx}
\usepackage{amsmath}
\usepackage{amsfonts}
\usepackage[caption=false]{subfig}
\usepackage{multirow}

\newcommand{\pderiv}[3]{\frac{\partial^{#2} #1}{\partial #3^{#2}}}

\newcommand{\ppderiv}[6]{\frac{\partial^{#2} #1}{\partial #3^{#4} \partial #5^{#6}}}

\newcommand{\pppderiv}[8]{\frac{\partial^{#2} #1}{\partial #3^{#4} \partial #5^{#6} \partial #7^{#8}}}

\newcommand{\tpderiv}[3]{\tfrac{\partial^{#2} #1}{\partial #3^{#2}}}
\newcommand{\tppderiv}[6]{\tfrac{\partial^{#2} #1}{\partial #3^{#4} \partial #5^{#6}}}
\newcommand{\tpppderiv}[8]{\tfrac{\partial^{#2} #1}{\partial #3^{#4} \partial #5^{#6} \partial #7^{#8}}}

\newcommand{\pderivelip}[6]{\frac{\partial^{#2} {#1}} {\partial {#3}^{#4} \cdots \partial {#5}^{#6}}}

\newcommand{\abs}[1]{\lvert#1\rvert}

\newcommand{\Bigabs}[1]{\Bigl\lvert#1\Bigr\rvert}
\newcommand{\Biggabs}[1]{\Biggl\lvert#1\Biggr\rvert}

\providecommand{\norm}[1]{\lVert#1\rVert}
\newcommand{\Bignorm}[1]{\Bigl\lvert \Bigl\lvert #1 \Bigr\rvert \Bigr\rvert}
\newcommand{\Biggnorm}[1]{\Biggl\lvert \Biggl\lvert #1 \Biggr\rvert \Biggr\rvert}

\newcommand{\bigpars}[1]{\bigl(#1\bigr)}
\newcommand{\Bigpars}[1]{\Bigl(#1\Bigr)}
\newcommand{\Biggpars}[1]{\Biggl(#1\Biggr)}

\newcommand{\bigbracks}[1]{\bigl[#1\bigr]}
\newcommand{\Bigbracks}[1]{\Bigl[#1\Bigr]}
\newcommand{\Biggbracks}[1]{\Biggl[#1\Biggr]}

\providecommand{\norm}[1]{\lVert#1\rVert}

\newcommand{\innerprod}[2]{\langle #1, #2 \rangle}

\newcommand{\kronprodelip}[2]{{#1} \otimes \cdots \otimes {#2}}

\newcommand{\kronderivelip}[2]{ \kronprodelip{\pderiv{}{}{#1}}{\pderiv{}{}{#2}} }
\newcommand{\tkronderivelip}[2]{ \kronprodelip{\tpderiv{}{}{#1}} {\tpderiv{}{}{#2}}}

\newcommand{\Mmax}{M_{\text{max}}}
\newcommand{\Linh}{L_{\text{inh}}}

\newcommand{\compdom}{X_c}
\newcommand{\albpatch}{\mathcal{A}}
\newcommand{\closedcompdomwoalb}{\overline{X_c \setminus \mathcal{A}}}
\newcommand{\compdomwoalb}{X_c \setminus \mathcal{A}}
\newcommand{\lyupderiv}[1]{\pderiv{\pi}{}{x} #1 \dot{x}}
\newcommand{\tlyupderiv}[1]{\tpderiv{\pi}{}{x} #1 \dot{x}}
\newcommand{\dpidx}{\pderiv{\pi}{}{x}}
\newcommand{\tdpidx}{\tpderiv{\pi}{}{x}}

\newcommand{\dpibardx}{\pderiv{\bar{\pi}}{}{x}}

\newcommand{\tdpibardx}{\tpderiv{\bar{\pi}}{}{x}}

\newcommand{\dzdw}{\pderiv{z}{}{w}}

\newcommand{\tildepi}{\tilde{\pi}}
\newcommand{\tildekappa}{\tilde{\kappa}}
\newcommand{\tildexi}{\tilde{\xi}}
\newcommand{\tildef}{\tilde{f}}
\newcommand{\tildeg}{\tilde{g}}
\newcommand{\tildeq}{\tilde{q}}
\newcommand{\tilder}{\tilde{r}}

\newcommand{\hatpi}{\hat{\pi}}
\newcommand{\hatkappa}{\hat{\kappa}}
\newcommand{\hatxi}{\hat{\xi}}
\newcommand{\hatf}{\hat{f}}
\newcommand{\hatg}{\hat{g}}
\newcommand{\hatq}{\hat{q}}
\newcommand{\hatr}{\hat{r}}

\newcommand{\ith}[1]{{#1}^{\text{th}}}

\newcommand{\xdot}[2]{\dot{x}_{#1}^{#2}}

\title{The patchy Method for the Infinite Horizon Hamilton-Jacobi-Bellman Equation and its Accuracy \thanks{This work was partially supported by AFOSR}}

\author{Thomas Hunt\footnotemark[2]  \and Arthur J. Krener\footnotemark[2]}

\begin{document}
\maketitle

\renewcommand{\thefootnote}{\fnsymbol{footnote}}
\footnotetext[2]{Naval Postgraduate School, 833 Dyer Road
Monterey, CA 93943-5216 (twhunt@nps.edu, ajkrener@nps.edu)}

\renewcommand{\thefootnote}{\arabic{footnote}}

\begin{abstract}
We introduce a modification to the patchy method of Navasca and Krener for solving the stationary Hamilton Jacobi Bellman equation.
The numerical solution that we generate is a set of polynomials that approximate the optimal cost and optimal control on a partition of the state space.
We derive an error bound for our numerical method under the assumption that the optimal cost is a smooth strict Lyupanov function.
The error bound is valid when the number of subsets in the partition is not too large.
\end{abstract}

\begin{keywords}
stationary Hamilton Jacobi Bellman equation, infinite horizon optimal control problem, patchy solutions
\end{keywords}

\begin{AMS}
49-04, 49J15, 49J20, 49L99, 49M37
\end{AMS}

\section{Introduction} \label{sec:background:opt_cntrl_prob}
In this paper, we introduce a numerical scheme for solving the infinite horizon optimal control problem of minimizing the integral
\begin{subequations} \label{eqn:optml_cntrl_prblm}
\begin{equation} \label{eqn:lagrangian_integral}
\int_0^{\infty} l(x,u) dt
\end{equation}
of a Lagrangian $l(x,u)$ subject to the controlled dynamics
\begin{align}\label{eqn:dynamics}
\dot{x} = f(x,u) & & x(0) = x^0
\end{align}
\end{subequations}
where $f$ and $l$ are both smooth, and $l$ is strictly convex in the control $u \in \mathbb{R}^m$ for all states $x \in \mathbb{R}^n$.
The solution that we are interested in is an optimal cost $\pi : \mathbb{R}^n \rightarrow \mathbb{R}$, which is the minimum value of \eqref{eqn:lagrangian_integral} incurred by driving the state from an initial value of $x$ to the origin.
The optimal control that drives the state to the origin is given in feedback form by $u=\kappa(x)$.
Our solution is a set of polynomials that approximate the optimal cost and optimal control on a partition of a subset of the state space.

Our method is a modification of the original patchy method \cite{krener_navasca_patchy}, which we altered to obtain the error bound in Theorem \ref{thm:cmptd_exct_err_bnd}.
Like the original patchy method, it is an extension of the Cauchy-Kovalevskaya method \cite{pde_text2}, the fast marching method \cite{412624, sethian_text}, the patchy technique of Ancona and Bressan \cite{Ancona2007279}, and Al'brekht's power series method \cite{albrekht}.

Al'brekht's method is an algorithm for computing a series solution to the optimal cost that must be centered at the origin of the state space.
A drawback to the requirement that the series solution be centered at the origin is, even when the optimal cost and optimal control are smooth over the entire domain of interest, their power series solutions may be local in nature.
This means that outside of some region containing the origin, it may be impractical to compute the number of terms necessary to achieve a desired accuracy.
The patchy method that we present uses Al'brekht's series solution as an initialization step, and generates a new series solution centered at a point away from the origin.
Doing this enlarges the region where we have a valid approximation to the optimal cost and optimal control.

\subsection{The Hamilton-Jacobi-Bellman PDE}
It is widely known that if the optimal control problem \eqref{eqn:optml_cntrl_prblm} has a smooth optimal cost $\pi(x)$, and the optimal control can be put in feedback form $u=\kappa(x)$, then the optimal control and optimal cost satisfy the Hamilton-Jacobi-Bellman (HJB) equation
\begin{align} \label{eqn:HJB_PDE0}
\begin{split}
0 &= \min_u \Bigl( \frac{\partial \pi}{\partial x}(x) f(x,u) + l(x,u) \Bigr) \\
\kappa(x) &= \text{arg} \min_u{\Bigl(\frac{\partial \pi}{\partial x}(x) 
                               \frac{\partial f} {\partial u}(x,u) + 
                               \frac{\partial l} {\partial u}(x,u) \Bigr)}
\end{split}
\end{align}
locally around the origin.
Furthermore, if $\tfrac{\partial \pi}{\partial x}(x) f(x,u) + l(x,u)$ is strictly convex in
$u$ locally around $x=0$, $u=0$, then the HJB PDEs \eqref{eqn:HJB_PDE0} simplify to
\begin{subequations} \label{eqn:HJB_PDE}
\begin{align} 
0 &= \frac{\partial \pi}{\partial x}(x) f(x,\kappa(x)) + l(x,\kappa(x)) 
\label{eqn:HJB_PDE1} \\
0 &= \frac{\partial \pi}{\partial x}(x) 
                               \frac{\partial f} {\partial u}(x, \kappa(x)) + 
                               \frac{\partial l} {\partial u}(x,\kappa(x)).
\label{eqn:HJB_PDE2}
\end{align}
\end{subequations}

\subsection{Solutions to the Hamilton-Jacobi-Bellman PDE}
The existence of a solution of the optimal control problem \eqref{eqn:optml_cntrl_prblm} around the origin is determined by the leading order terms of the problem data $f$ and $l$.
Suppose the dynamics and Lagrangian have Taylor expansions
\begin{equation} \label{eqn:lagrangian_dynamics_series}
\begin{gathered}
\dot{x} = F x + G u + \sum_{k=2}^{\infty}f^{[k]}(x,u) \\
l(x,u) = \frac{1}{2}\bigl( x^T Q x + u^T R u \bigr) 
          + \sum_{k=3}^{\infty}l^{[k]}(x,u) \\
\end{gathered}
\end{equation}
where $^{[k]}$ denotes degree $k$ terms in the power series, $R$ is positive definite, and $Q$ is positive semidefinite. 
The optimal control problem is said to be \emph{nice} if $(F,G)$ is stabilizable and $(Q^{1/2},F)$ is controllable.

Al'brekht \cite{albrekht} showed that when the optimal control problem \eqref{eqn:HJB_PDE} is nice, the optimal cost and optimal control have series solution centered at the origin of the state space.
Lukes \cite{doi:10.1137/0307007} showed that both these series solutions converge to the optimal cost and control under suitable conditions. 
Al'brekht's method is an algorithm for computing both these series.
It works by computing the degree $j$ and $j-1$ terms as a pair in the expansions of the optimal cost and optimal control, starting at $j=2$ and terminating at some user specified degree.
The original description of the algorithm is in Al'brekht's original work \cite{albrekht}, and a generalization with notation consistent with this paper is in \cite{krener_navasca_patchy}.
The Matlab implementation of Al'brekht's method that we use in the patchy method is available by request \cite{art_code}.

\section{Derivation of the patchy method} \label{sec:drvtn}
\subsection{Optimal control problem assumptions}
\label{sec:drvtn:opt_cntrl_ass}
We will derive the patchy method for the special case of the optimal control problem where the control is scalar ($m=1$), and the dynamics and lagrangian have the form
\begin{align}
\label{eqn:drvtn_dynamics_lagrangian_form}
\dot{x} = f(x) + g(x) u
& &
l(x,u) = q(x) + \frac{1}{2} r(x) u^2
\end{align}
where
\begin{align*}
f,g : \mathbb{R}^n \rightarrow \mathbb{R}^n
& &
q,r : \mathbb{R}^n \rightarrow \mathbb{R}
& &
u \in \mathbb{R}
.
\end{align*}
The patchy method does not require these assumptions, but they do  significantly simplify its derivation.
Furthermore, we assume
\begin{enumerate}
\item
\label{ass:smooth_prob_data}
$f$, $g$, $q$, $r$ have continuous partial derivatives of to degree $d+2$ on $X_c$.
\item
\label{ass:nice_optml_cntrl_prob}
The optimal control problem is ``nice", meaning it can be expressed in the form of \eqref{eqn:lagrangian_dynamics_series}, and $(F,G)$ is stabilizable and $(F, Q^{1/2})$ is detectable.
We also assume that $Q$ is strictly positive definite.%
\item
\label{ass:smooth_pi}
$\pi$ has continuous partial derivatives up to order $d+2$ on $X_c$
\item
\label{ass:qr_pos_def}
$q \ge 0$ and $r>0$ on $X_c$
\item
\label{ass:strict_lyup_pi}
The optimal cost is a strict Lyapunov function on $\compdom$, meaning
\begin{equation}
\pderiv{\pi}{}{x}(x) \dot{x} = \pderiv{\pi}{}{x}(x) \bigpars{f(x) + g(x) \kappa(x)  } < 0
\end{equation}
for all $x$ in $X_c \setminus \{0\}$.
\end{enumerate}
Assumptions \ref{ass:smooth_prob_data} and \ref{ass:nice_optml_cntrl_prob} guarantee that assumption \ref{ass:smooth_pi} holds in some neighborhood of the origin.
The Taylor series of $\pi$ in this neighborhood can be computed by Al'brekht's method.
The fifth assumption is essential in both the derivation of the patchy method and the proof of its error bound in Theorem \ref{thm:cmptd_exct_err_bnd}.

\subsection{Overview of the patchy method} 

Although the method works in higher dimensions, we will assume in the overview that the state space is two dimensional.
This allows us to illustrate how we partition the state space.
We first compute the degree $d+1$ Taylor approximation to the optimal cost at the origin by Al'brekht's method, which we denote $\pi^0$.
We assume that all truncated series solutions to the optimal cost and optimal control are polynomials of degree $d+1$ and $d$. 
We then pick a sublevel set $\pi^0 \le c$ on which $\pi^0$ has an acceptable level of error.
We refer to this sublevel set as the \emph{Al'brekht patch} and denote it as $\albpatch$.
The patchy method expands the domain of the numerical solution to a superset of the Al'brekht patch, which we refer to as the computational domain and denote it as $\compdom$.
The patchy method picks a \emph{patch point}, denoted $x^i$, on the boundary of the Al'brekht patch and then computes an approximating polynomial to the optimal cost that is centered at the new patch point, which we denote as $\pi^i$.
The core of the patchy algorithm is a method to compute the approximate partial derivatives of the optimal cost at $x^i$, thus computing an approximation to the Taylor polynomial centered at $x^i$ of the optimal cost.
The patchy method repeats this process until the origin is surrounded with a number of patch points.

The computational domain is partitioned into a set of \emph{patches} during the course of the patchy algorithm.
Each patch point is associated with a single patch, and the optimal cost and optimal control at any state that falls inside in the patch are computed by the approximating polynomials centered at the associated patch point.
Every point in the computational domain is associated with exactly one patch, and we compute the approximate optimal cost and optimal control at the point by the series solutions centered at the associated patch point.
In two dimensions, the boundaries of the patch are the boundary of the Al'brekht patch, a level curve of the optimal cost centered at the patch point, and two \emph{lateral boundaries} that intersect the other two boundaries.
Illustrations of the Al'brekht patch and a typical patch in a two dimensional state space are included in Figure \ref{fig:albrekht_typical_patch}.

Once the Patchy method has encircled the Al'brekht patch with new patches, it generates a new ring of patches and associated optimal cost polynomials, where the new patches form a ring that encloses the old set of patches.

As we will see, under some standard assumptions on the underlying optimal control problem, as well as the assumptions that the true solution is both sufficiently smooth and a strict Lyapunov function, we can derive error estimates on the patchy method.
Unfortunately, these error estimates do not go to zero as the density of patches on the computational domain goes to infinity.
To our knowledge, the only algorithm that provably solves the nonlinear Hamilton-Jacobi-Bellman equations in multiple dimensions with higher order accuracy is the finite difference scheme due to Szpiro and Dupuis \cite{szpiro_dupuis}.
Their method is provably second order accurate on subsets of the computational domain where the true solution is smooth.

\subsection{Placing a new patch point}
Given the \emph{previous} patch point $x^i$ and its associated approximate optimal cost $\pi^i$, the patchy method must determine where in the state space to place the \emph{new} patch point $x^{i+1}$.
If the computed optimal cost $\pi^i$ is sufficiently close to the true optimal cost in a neighborhood of $x^i$, then by the assumption that the optimal cost is a strict Lyapunov function, the closed loop dynamics is not tangent to a level curve of $\pi^i$ in the neighborhood.
Thus the partial derivatives of the optimal cost can be calculated by the technique of this section at a point on an appropriately chosen level curve of $\pi^i$.
The patchy method places $x^{i+1}$ on a level curve of $\pi^i$ under the constraint that the distance between $x^i$ and $x^{i+1}$ is less than $h$, the \emph{maximum consecutive patch point distance}.
We derive the maximum consecutive patch point distance in chapters \ref{sec:lte_1} and \ref{sec:phi-1_lip}.

\subsection{Computing the optimal cost at a new patch point}
Away from the origin, the HJB equations alone do not fully specify all the partial derivatives of the optimal cost at a given patch point.
As a consequence, the patchy algorithm computes some partial derivatives of the optimal cost from the HJB equations, and computes the remaining partial derivatives from the previous optimal cost.

The computed optimal cost at the new patch point $x^{i+1}$ is calculated by \emph{inheritance}, meaning we set
\begin{equation*}
\pi^{i+1}(x^{i+1}) = \pi^i(x^{i+1})
\end{equation*}

\subsection{Computing the first order partial derivatives of the optimal cost away from the origin}
First, we compute $\mathbf{n}$, the gradient direction of $\pi^{i+1}$ at $x^{i+1}$ from $\pi^i$ by
\begin{equation}
\label{eqn:drvtn_nrmlzd_opt_cost_grdnt}
\mathbf{n} \equiv \frac{1}{\norm{\pderiv{\pi^i}{}{x}(x^{i+1})}} \pderiv{\pi^i}{}{x}(x^{i+1})
.
\end{equation}
If the level sets of the previous and new optimal costs are tangent, then the gradients of the previous and new optimal cost are collinear at the current patch point.
In this case, there is some positive scalar $z$ such that
\begin{equation}
\label{eqn:drvtn_crrnt_opt_cost_grad}
\pderiv{\pi^{i+1}}{}{x}(x^{i+1}) = z \mathbf{n}
\end{equation}
To compute $z$, we first define the scalars
\begin{align}
\label{eqn:drvtn_fn_gn_defn}
f_n \equiv \mathbf{n} \cdot f(x^{i+1}) & &\text{and} & & g_n \equiv \mathbf{n} \cdot g(x^{i+1})
.
\end{align}
We can derive a formula for the optimal control in terms of the gradient of the optimal cost by solving \eqref{eqn:HJB_PDE2} under the assumption \eqref{eqn:drvtn_dynamics_lagrangian_form}.
If we substitute this formula for the optimal control into \eqref{eqn:HJB_PDE1}, then it reduces to the scalar quadratic equation
\begin{equation}
\label{eqn:drvtn_sclr_hjb_eqn}
0 = -\frac{g_n^2}{2 r(x^{i+1})} z^2  + f_n z + q(x^{i+1})
\end{equation}
under \eqref{eqn:drvtn_crrnt_opt_cost_grad} and \eqref{eqn:drvtn_fn_gn_defn}.
If the computed and exact gradients of the optimal cost are sufficiently close, then it follows from the assumption that the exact optimal cost is a strict Lyapunov function that the quadratic equation \eqref{eqn:drvtn_sclr_hjb_eqn} has exactly one strictly positive root.
We denote this positive root as $z_+$, so the computed gradient of the new optimal cost and the computed new optimal control at the new patch point are
\begin{align} \label{eqn:drvtn_crnt_optml_cost_grdnt_optml_cntrl}
\pderiv{\pi^{i+1}}{}{x}(x^{i+1})
=
z_+ \mathbf{n} 
& &
\kappa^{i+1}(x^{i+1})
=
-\frac{1}{r(x^{i+1})} \pderiv{\pi^{i+1}}{}{x}(x^{i+1}) g(x^{i+1})
\end{align}

\subsection{Computing higher order partial derivatives of the optimal cost away from the origin}
\label{sec:hghr_ordr_prtls_off_alb}
Let $\dot{x}^{i+1}$ denote the computed \emph{optimal direction} at the new patch point calculated from the first order partial derivatives of the computed optimal cost.
If the exact and computed gradients of the optimal cost are sufficiently close at the new patch point, then it follows from the assumption that the optimal cost is a strict Lyapunov function that the computed optimal direction is nonzero.
The computed optimal direction is calculated from the formula
\begin{equation*}
\dot{x}^{i+1} = f(x^{i+1}) + g(x^{i+1}) \kappa^{i+1}(x^{i+1})
.
\end{equation*}
We set 
\begin{align*}
\hat{V}^1 = \frac{1}{\norm{\dot{x}^{i+1}}} \dot{x}^{i+1}
& &
\hat{V} \equiv \begin{bmatrix} \hat{V}^1 \cdots \hat{V}^n \end{bmatrix}
\end{align*}
where $\hat{V}^1, \hat{V}^2, \dotsc, \hat{V}^n $ form an orthonormal basis of $\mathbb{R}^n$.
This can be done with a Householder reflector \cite[ch.~5.1.2]{golub_van_loan:mat_comps}.
We will compute the higher order partial derivatives of the optimal cost under the change of variables
\begin{equation*}
x = x^{i+1} + \hat{V} \hatxi
\end{equation*}
and then recover the partial derivatives with respect to the original state space variables.
In the new variables, the HJB equations \eqref{eqn:HJB_PDE} become
\begin{subequations} \label{eqn:drvtn_trnsfrms_HJB}
\begin{align}
\label{eqn:drvtn_trnsfrms_HJBa}
0
&=
\pderiv{\hatpi}{}{\hatxi}(\hatxi)
\bigpars{\hatf(\hatxi) + \hatg(\hatxi) \hatkappa(\hatxi)}
+ \hatq(\hatxi) + \frac{1}{2}\hatr(\hatxi) \bigpars{\hatkappa(\hatxi)}^2 \\
\label{eqn:drvtn_trnsfrms_HJBb}
0
&=
\pderiv{\hatpi}{}{\hatxi}(\hatxi) \hatg(\hatxi) + \hatr(\hatxi) \hatkappa(\hatxi)
\end{align}
\end{subequations}
where
\begin{align}
\begin{aligned}
\hatpi(\hatxi) &\equiv \pi(x^{i+1} + \hat{V} \hatxi) \\
\hatf(\hatxi) &\equiv \hat{V}^T f(x^{i+1} + \hat{V} \hatxi) \\
\hatq(\hatxi) &\equiv q(x^{i+1} + \hat{V} \hatxi) 
\end{aligned}
& &
\begin{aligned}
\hatkappa(\hatxi) &\equiv \kappa(x^{i+1} + \hat{V} \hatxi) \\
\hatg(\hatxi) &\equiv \hat{V}^T g(x^{i+1} + \hat{V} \hatxi) \\
\hatr(\hatxi) &\equiv r(x^{i+1} + \hat{V} \hatxi)
\end{aligned}
\end{align}
To derive a formula for the \emph{characteristic} second order partial derivatives of the optimal cost, we evaluate the derivative of the first HJB equation in \eqref{eqn:drvtn_trnsfrms_HJBa} with respect to $\hatxi_j$ at $\hatxi=0$, and arrive at
\begin{equation} \label{eqn:scnd_drvtv_optml_cst_exct}
0
=
\norm{\dot{x}} \ppderiv{\hatpi}{2}{\hatxi_j}{}{\hatxi_1}{}(0)
+
\pderiv{\hatpi}{}{\hatxi}{}(0)
\Bigpars{
\pderiv{\hatf}{}{\hatxi_j}(0) + \pderiv{\hatg}{}{\hatxi_j}(0)\hatkappa(0)
}
+
\pderiv{\hatq}{}{\hatxi_j}(0)
+
\frac{1}{2} \pderiv{\hatr}{}{\hatxi_j}(0) \bigpars{\hatkappa(0)}^2
\end{equation}
where $\dot{x}$ denotes the optimal direction at $x^{i+1}$.
The terms involving the partial derivative of $\hatkappa$ with respect to $\hatxi_j$ drop out due to  \eqref{eqn:drvtn_trnsfrms_HJBb}.
We compute the \emph{characteristic} second order partial derivatives of $\hatpi^{i+1}$ by substituting in the computed approximations to the optimal direction and gradient of the optimal cost into \eqref{eqn:scnd_drvtv_optml_cst_exct}, and solving for the unknown second order partial derivatives of the optimal cost.
This yields the formula
\begin{multline} \label{eqn:ddpiddxi_cmptd}
\ppderiv{\hatpi^{i+1}}{2}{\hatxi_j}{}{\hatxi_1}{}(0)
=
-\frac{1}{\norm{\dot{x}^{i+1}} }
\Biggbracks{
 \pderiv{\hatpi^{i+1}}{}{\hatxi}{}(0)
\Bigpars{
\pderiv{\hatf}{}{\hatxi_j}(0) + \pderiv{\hatg}{}{\hatxi_j}(0)\hatkappa^{i+1}(0)
}
+
\pderiv{\hatq}{}{\hatxi_j}(0)
\\
+
\frac{1}{2} \pderiv{\hatr}{}{\hatxi_j}(0) \bigpars{\hatkappa^{i+1}(0)}^2
}
.
\end{multline}
The characteristic second order partial derivatives are a strict subset of all the second order partial derivatives, since the HJB equations do not specify all the second order partial derivatives.
We call the remaining second order partial derivatives \emph{non characteristic}.
We compute them by inheritance, meaning for $2 \le j_1 \le j_2 \le n$ we set
\begin{equation}
\ppderiv{\hatpi^{i+1}}{2}{\hatxi_{j_1}}{}{\hatxi_{j_2}}{}(0)
=
\ppderiv{}{2}{\hatxi_{j_1}}{}{\hatxi_{j_2}}{}
\bigbracks{\pi^i(x^{i+1} + \hat{V} \hatxi)}_{\hatxi=0}
.
\end{equation}
To get the approximate first order partial derivatives of the optimal control, we follow the same process.
We differentiate the second HJB equation\eqref{eqn:drvtn_trnsfrms_HJBb} with respect to $\hatxi_j$ at $\hatxi=0$, and after substituting in the computed partial derivatives of the optimal control and cost, we obtain the formula

\begin{equation} \label{eqn:dkappa_dxi_cmptd}
\pderiv{\hatkappa^{i+1}}{}{\hatxi_j}(0)
=
-\frac{1}{\hatr(0)}
\Biggbracks{
\ppderiv{\hatpi^{i+1}}{2}{\hatxi_j}{}{\hatxi}{}(0) \hatg(0)
+
\pderiv{\hatpi^{i+1}}{}{\hatxi}(0) \pderiv{\hatg}{}{\hatxi_j}(0)\hatkappa^{i+1}(0)
+
\pderiv{\hatr}{}{\hatxi_j}(0)\hatkappa^{i+1}(0)
}
.
\end{equation}
We compute the remaining higher order partial derivatives in an analogous fashion.
Their formulas are in \S \ref{sec:deriv_formulas}.

We now introduce new notation to describe how to  recover the partial derivatives with respect to the original state space variables from the partial derivatives with respect to the new variables.
They are computed by
\begin{align} \label{eqn:opt_cost_pderiv_rcvry_frmls}
\begin{split}
\ppderiv{\pi^{i+1}}{2}{x_{j_1}}{}{x_{j_2}}{}(x^{i+1})
&=
\pderiv{}{}{\hatxi} \otimes \pderiv{}{}{\hatxi}
\Bigbracks{\hatpi^{i+1}(\hatxi)}_{\hatxi=0}
(\hat{V}^T e_{j_1} \otimes \hat{V}^T e_{j_2})
\\
& \mspace{11 mu} \vdots 
\\
\pderivelip{\pi^{i+1}}{k}{x_{j_1}}{}{x_{j_k}}{}(x^{i+1})
&=
\kronprodelip{\pderiv{}{}{\hatxi}}{\pderiv{}{}{\hatxi}}
\Bigbracks{\hatpi^{i+1}(\hatxi)}_{\hatxi=0}
(\kronprodelip{\hat{V}^T e_{j_1}}{\hat{V}^T e_{j_k}})
\end{split}
\end{align}
where $e_j$ is the $\ith{j}$ column of the identity matrix.
Each formula in \eqref{eqn:opt_cost_pderiv_rcvry_frmls} is a standard dot product, written as the product of a row and column vector.
The symbol $\otimes$ has two meanings in \eqref{eqn:opt_cost_pderiv_rcvry_frmls}.
It is the standard Kronecker product if it appears between two matrices.
The row vector $\kronprodelip{\tpderiv{}{}{\hatxi}}{\pderiv{}{}{\hatxi}} [\hatpi^{i+1}(\hatxi)]$ is new notation for the \emph{Kronecker derivative}, which is a bookkeeping mechanism.
In the first equation in \eqref{eqn:opt_cost_pderiv_rcvry_frmls}, if the state space is two dimensional then the Kronecker derivative is defined as
\begin{equation*}
\pderiv{}{}{\hatxi} \otimes \pderiv{}{}{\hatxi}
\Bigbracks{\hatpi^{i+1}(\hatxi)}_{\hatxi=0}
=
\begin{bmatrix}
\pderiv{\hatpi}{2}{\hatxi_1}(\hatxi)
&
\ppderiv{\hatpi}{2}{\hatxi_1}{}{\hatxi_2}{}(\hatxi)
&
\ppderiv{\hatpi}{2}{\hatxi_1}{}{\hatxi_2}{}(\hatxi)
&
\pderiv{\hatpi}{2}{\hatxi_2}(\hatxi)
\end{bmatrix}_{\hat{\xi}=0}
.
\end{equation*}
We give the full definition of the Kronecker derivative in the appendix.

\subsection{Computing the optimal cost and optimal control away from a patch point}
To compute the optimal cost or optimal control at a point in the state space that is not a patch point, we need a means to determine which patch the point belongs to.
In other words, we need to know which polynomials $\pi^i$ and $\kappa^i$ to use to calculate the optimal cost and optimal control at some $\bar{x}$ in the state space.
There are two types of patches, \emph{typical} patches, and a single Al'brekht patch.
The boundary of the Al'brekht patch is a level curve of $\pi^0$ that encloses the entire patch.
The optimal cost and optimal control at a point inside the Al'brekht patch are computed from $\pi^0$ and $\kappa^0$.
A typical patch is associated with the patch point $x^{i+1} \ne 0$, and the optimal cost and optimal control at every point in the patch is computed from $\pi^{i+1}$ and $\kappa^{i+1}$.
When the state space is two dimensional, a typical patch has four intersecting boundaries that enclose the patch.
Two of the boundaries are level curves of the optimal cost, and the other two boundaries are \emph{lateral} boundaries.
The patch point $x^{i+1}$ lies on the optimal cost level curve of $\pi^i$ that defines one of the patch boundaries.
The other optimal cost level curve patch boundary is a level curve of $\pi^{i+1}$.
Each lateral boundary is a straight line segment that intersects the level curve of $\pi^i$ at a point $x_b$, and is oriented so that it points in the opposite direction of the optimal direction $f(x_b) + g(x_b) \kappa^i(x_b)$.
When the state space is two dimensional, the \emph{patch boundary point} $x_b$ is placed on a level curve of the optimal cost so that it is halfway between two adjacent patch points that lie on the same level curve.
The optimal direction at $x_b$ may be computed from the approximate optimal cost associated with either adjacent patch point.
Illustrations of the two types of patches are in Figure \ref{fig:albrekht_typical_patch}.
\begin{figure}[h]
\fbox{\subfloat[Al'brekht patch]{\includegraphics[keepaspectratio, height=1.4in]{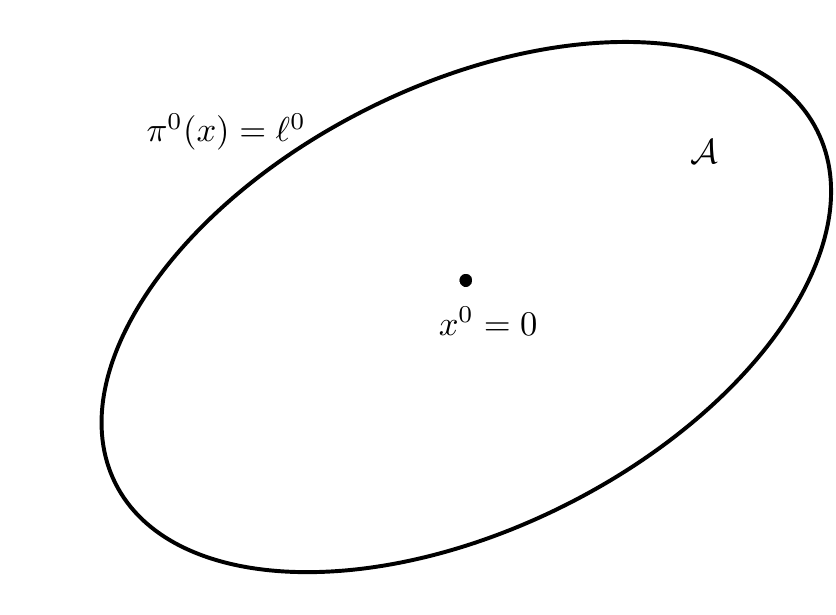}}}
\fbox{\subfloat[Typical patch]{\includegraphics[keepaspectratio, height=1.4in]{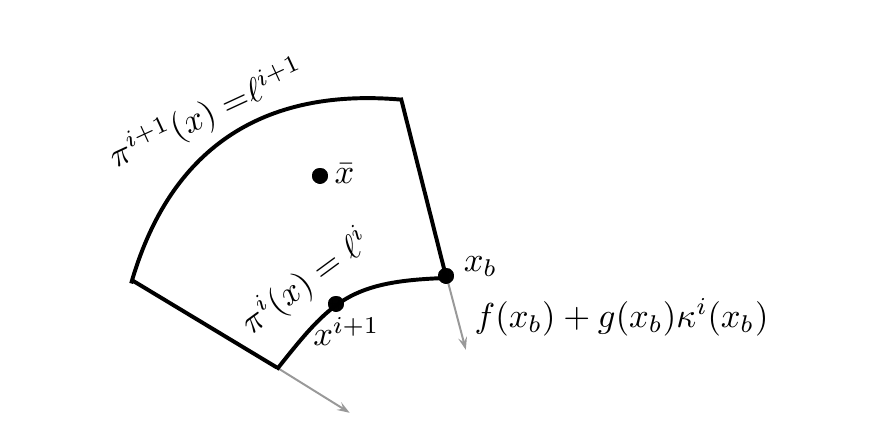}}}
\caption{Al'brekht patch and typical patch}
\label{fig:albrekht_typical_patch}
\end{figure}

\section{Analytical error bound for the patchy method}
\label{sec:anyl_err_bnd}
The difference between the exact and computed optimal cost grows as a power of the maximum consecutive patch point distance times a term that grows exponentially as the patchy method moves along a sequence of consecutive patch points.
We first develop some concepts and lemmas that we will need before making the more rigorous statement of the error bound in Theorem \ref{thm:cmptd_exct_err_bnd} at the end of this section.

\subsection{Sequence of consecutive patch points}
The error bound  is predicated on the notion of a \emph{sequence of consecutive patch points}.
\begin{definition}
\label{defn:cnsctv_ppt_sqnc}
The patch points $x^i$ and $x^{i+1}$ are said to be \emph{consecutive} if the computed optimal cost polynomial centered at $x^{i+1}$ was computed from information contained in the computed optimal cost polynomial centered at $x^i$.
The origin is always the first patch point in a sequence of consecutive patch points.
A sequence of three consecutive patch points is illustrated in Figure \ref{fig:cnsctv_ppts}.
\begin{figure}[h]
\centering
\fbox{\includegraphics[width=5.0in]{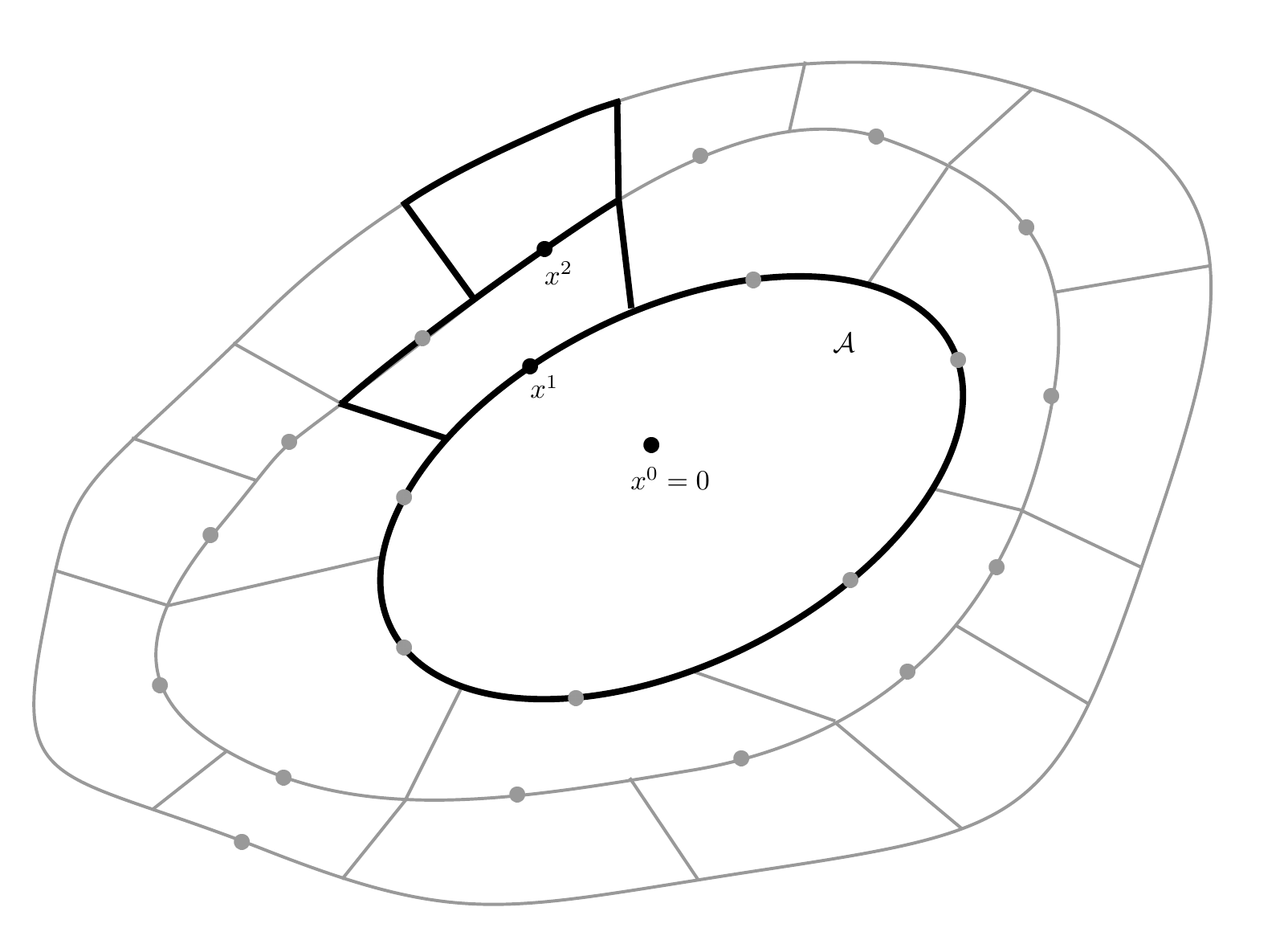}}
\caption{A sequence of three consecutive patch points $x^0, x^1, x^2$}
\label{fig:cnsctv_ppts}
\end{figure}
\end{definition}

\subsection{Notation} \label{sec:notation}
The patchy algorithm takes a truncated series solution to the optimal cost centered at the previous patch point, and generates a new series solution to the optimal cost centered at the next patch point.
Thus, it can be interpreted as a mapping from a set of polynomial coefficients associated with the previous patch point to a set of polynomial coefficients associated with the new patch point.
These coefficients are approximate partial derivatives of the optimal cost evaluated at the appropriate patch point, so we introduce notation to reflect this.

$\hat{C}_j^i$ denotes the vector of computed $\ith{j}$ partial derivatives of the optimal cost at $x^i$.
For example, if the state space is two dimensional
\begin{equation*}
\hat{C}_2^i = 
\begin{bmatrix}
\pderiv{\hat{\pi}}{2}{x_1}(x^i) &
\ppderiv{\hat{\pi}}{2}{x_1}{}{x_2}{}(x^i) &
\pderiv{\hat{\pi}}{2}{x_2}(x^i) 
\end{bmatrix}
\end{equation*}
$\hat{C}^i$ is the vector of all partial derivatives of the computed optimal cost at $x^i$ up to order $d+1$, that is
\begin{equation*}
\hat{C}^i = 
\begin{bmatrix}
\hat{C}_0^i & \hat{C}_1^i & \cdots & \hat{C}_{d+1}^i
\end{bmatrix}
\end{equation*}

We denote with $\phi_j$ the part of the patchy method that computes the $\ith{j}$ order partial derivatives of the optimal cost at the current patch point from the partial derivatives of the optimal cost evaluated at the previous patch point, meaning
\begin{equation*}
\hat{C}_j^{i+1} = \phi_j(\hat{C}^i)
\end{equation*}

$C_j^i$ denotes the vector of exact $\ith{j}$ partial derivatives of the optimal cost $\pi$ at $x^i$, and $C^i$ has a definition that is analogous to $\hat{C}^i$.

\subsection{Local truncation vector}
\begin{definition}
\label{defn:loc_trunc_vec}
$C^i$ denotes the vector of exact partial derivatives of the optimal cost at $x^i$, and $C_j^{i+1}$ denotes the exact partial derivatives of order $j$ of the optimal cost at $x^{i+1}$, where $x^i$ and $x^{i+1}$ are consecutive patch points.
The local truncation vector $\tau_j^i$ is
\begin{equation*}
\tau_j^i \equiv C^{i+1}_j - \phi_j(C^i)
\end{equation*}
which is the difference between the exact $\ith{j}$ order partial derivatives of $\pi$  at $x^{i+1}$, and the computed partial derivatives of $\pi$ at $x^{i+1}$ that are generated by feeding $\phi_j$ the exact partial derivatives of of $\pi$ centered at $x^i$.
\end{definition}

\subsection{The Lipschitz condition}
\begin{definition}
\label{defn:lip_cond}
Suppose $C^i_j$ and $\hat{C}^i_j$ are vectors of exact and approximate partial derivatives of order $j$ of the optimal cost $\pi$ at $x^i$.
The function $\phi$ is said to satisfy the Lipschitz condition if there exists a \emph{maximum multiplier} $\Mmax > 0$ and a maximum consecutive patch point distance $h>0$, such that for all $M \le \Mmax$ and all $\norm{x^{i+1} - x^i} \le h$, then
\begin{gather*}
\norm{\phi_k(C^i) - \phi_k(\hat{C}^i)}_2 \le L M \norm{x^{i+1}-x^i}_2^{d+2-k}
\text{ for } 
0 \le k \le d+1
\intertext{whenever}
\norm{C^i_j - \hat{C}^i_j}_2 \le M \norm{x^{i+1}-x^i}^{d+2-j}_2
\text{ for } 
0 \le j \le d+1
\end{gather*}
\end{definition}

The following lemma will help us establish that the patchy method satisfies the Lipschitz condition.
\begin{lemma}
\label{lem:poly_lip}
Polynomials satisfy the Lipschitz condition of Definition \ref{defn:lip_cond} with $\Mmax \le \infty$.
We may write any degree $d+1$ polynomial as
\begin{equation*}
P(C,x) \equiv \sum_{\ell =0}^{d+1} C_{\ell} \cdot m_{\ell}(x).
\end{equation*}
where $C_{\ell}$ is a coefficient vector as defined in \S \ref{sec:notation} and $m_{\ell}(x)$ is a vector of $\ith{\ell}$ degree monomials so that all the degree $\ell$ terms of the polynomial are $C_{\ell} \cdot m_{\ell}(x)$.
Thus, if $\norm{C_{\ell} - \hat{C}_{\ell}}_2 \le M \norm{x}_2^{d+2-\ell}$, then there exists $\Linh < \infty$ such that
\begin{equation*}
\Bigabs{\frac{\partial^k P}{\partial x_{j_1} \partial x_{j_2} \cdots \partial x_{j_k}}(C,x) 
- \frac{\partial^k P}{\partial x_{j_1} \partial x_{j_2} \cdots \partial x_{j_k}}(\hat{C},x) }
\le 
\Linh M \norm{x}_2^{d+2-k}
\end{equation*}

\begin{proof}
Since $P(C,x)-P(\hat{C},x)=P(C-\hat{C},x)$, we may assume without a loss of generality that $\hat{C}=0$, and $\norm{C_{\ell}}_2 \le M \norm{x}_2^{d+2-\ell}$.
By Cauchy-Schwarz, 
\begin{equation*}
\Bigabs{\frac{\partial^k P}{\partial x_{j_1} \partial x_{j_2} \cdots \partial x_{j_k}}(C,x) }
\le \sum_{\ell=k}^{d+1} \norm{C_{\ell}} \Bignorm{\frac{\partial^k m_{\ell}}{\partial x_{j_1} \partial x_{j_2} \cdots \partial x_{j_k}}(x)}.
\end{equation*}
Each entry in the $\ith{k}$ order partial derivative of $m_{\ell}$ is $\mathcal{O}(\norm{x}^{\ell-k})$, and by assumption $\norm{C_{\ell}}_2 \le M \norm{x}_2^{d+2-\ell}$, so each term in the sum is $\mathcal{O}(\norm{x}^{d+2-k})$.
\end{proof}
\end{lemma}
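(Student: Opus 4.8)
The plan is to exploit the two linearities built into the object $P$. First, the map $C \mapsto P(C,x)$ is affine-linear in the coefficient vector, so $P(C,x)-P(\hat{C},x)=P(C-\hat{C},x)$; as the statement already notes, this reduces everything to the case $\hat{C}=0$ under the single hypothesis $\norm{C_{\ell}}_2 \le M \norm{x}_2^{d+2-\ell}$. Second, differentiation is linear, so the same reduction passes through any partial derivative operator, and it suffices to bound a single $k$-th order derivative $\pderivelip{P}{k}{x_{j_1}}{}{x_{j_k}}{}(C,x)$ directly. Because $M$ enters only through the coefficient bound, nothing constrains its size, which is exactly what $\Mmax \le \infty$ records.

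Next I would differentiate block by block. Since the degree-$\ell$ block $C_{\ell}\cdot m_{\ell}(x)$ is homogeneous of degree $\ell$, applying $k$ partial derivatives annihilates every block with $\ell<k$, leaving
\[
\pderivelip{P}{k}{x_{j_1}}{}{x_{j_k}}{}(C,x)
=
\sum_{\ell=k}^{d+1} C_{\ell}\cdot \pderivelip{m_{\ell}}{k}{x_{j_1}}{}{x_{j_k}}{}(x).
\]
Applying Cauchy--Schwarz to each summand produces the bound already displayed in the statement, a finite sum of $\norm{C_{\ell}}_2$ against $\norm{\pderivelip{m_{\ell}}{k}{x_{j_1}}{}{x_{j_k}}{}(x)}_2$.

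The heart of the argument is then a purely monomial estimate. Each entry of $\pderivelip{m_{\ell}}{k}{x_{j_1}}{}{x_{j_k}}{}(x)$ is a constant multiple of a monomial of total degree exactly $\ell-k$, and any monomial of degree $p$ is bounded in absolute value by $\norm{x}_2^{\,p}$ since $\abs{x_i}\le\norm{x}_2$ for every $i$. As $m_{\ell}$ has a fixed finite number of entries depending only on $n$ and $\ell$, I obtain $\norm{\pderivelip{m_{\ell}}{k}{x_{j_1}}{}{x_{j_k}}{}(x)}_2 \le c_{\ell,k}\,\norm{x}_2^{\,\ell-k}$ for a constant $c_{\ell,k}$ absorbing the multinomial factors from differentiation and the entry count. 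Combining this with $\norm{C_{\ell}}_2 \le M \norm{x}_2^{d+2-\ell}$ bounds the $\ell$-th term by $c_{\ell,k}\,M\,\norm{x}_2^{\,d+2-k}$, the key point being that the exponents $d+2-\ell$ and $\ell-k$ add to $d+2-k$ \emph{independently of} $\ell$.

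Finally I would sum over $\ell$ from $k$ to $d+1$ and set $\Linh \equiv \max_{0\le k\le d+1}\sum_{\ell=k}^{d+1} c_{\ell,k}$, which is finite as a finite sum of finite constants and is valid simultaneously for every order $k$ and for arbitrarily large $M$. I do not anticipate a genuine obstacle: the only step requiring care is the homogeneity bookkeeping, namely verifying that differentiating a degree-$\ell$ monomial $k$ times yields degree exactly $\ell-k$ so that the coefficient bound and the monomial bound combine to the $\ell$-independent exponent $d+2-k$; everything else is a finite sum of homogeneous estimates.
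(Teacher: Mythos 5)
Your proof takes essentially the same route as the paper's: reduce to $\hat{C}=0$ by linearity in the coefficients, apply Cauchy--Schwarz term by term, and use the homogeneity of the degree-$\ell$ monomial block to get the $\ell$-independent exponent $d+2-k$. You simply fill in the bookkeeping (the annihilation of blocks with $\ell<k$, the explicit monomial bound $\abs{x_i}\le\norm{x}_2$, and the final maximization over $k$) that the paper leaves implicit; the argument is correct.
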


The next lemma establishes a bound on the difference between the Taylor polynomial coefficients of the optimal cost $\pi$, and the coefficients of the computed optimal cost $\pi^i$.

\begin{lemma}
\label{lem:opt_cost_coef_diff_bnd}
Suppose there exists a maximum consecutive patch point distance $h>0$ such that $\phi$ satisfies the Lipschitz condition with constants $L$ and $\Mmax$.
Also, suppose there exists a local truncation error constant $T$ such that for any patch points $x^i$ and $x^{i+1}$ in $\closedcompdomwoalb$ within a radius of $h$ of each other,
\begin{equation*}
\norm{\tau_j^i} \le T h^{d+2-j}
\end{equation*}
where the approximating polynomials to the optimal cost are degree $d+1$.
Let $\hat{C}_j^i$ denote the vector of $\ith{j}$ order partial derivatives of the optimal cost computed by the patchy method, where $i$ is the sequence index for a sequence of consecutive patch points.
Let $C_j^i$ denote the corresponding vector of exact partial derivatives.
The growth in the difference between the true and computed $\ith{j}$ partial derivatives of the optimal cost is bounded by
\begin{gather*}
\norm{C_j^i - \hat{C}_j^i} \le \frac{L^i - 1}{L-1} T h^{d+2-j}
\text{ for } 1 \le i \le N
\intertext{where $N$ is the smallest integer such that}
\frac{L^N - 1}{L-1} T \le \Mmax
\end{gather*}
\begin{proof}
The proof proceeds by induction on the sequence index $i$, and is similar in spirit to the construction of an error bound between the true solution of an ODE and a solution computed by a one step method \cite[ch.~7.2]{stoer_bulirsch:nmrcl_anly}.
We assume that all arithmetic is exact.
It follows from the definition of the local truncation vector that the difference between the true and computed $\ith{j}$ order partial derivatives is bounded by
\begin{equation*}
\norm{C_j^{i+1} - \hat{C}_j^{i+1}} \le \norm{\phi_j(C^i) - \phi_j(\hat{C}^i)} + \norm{ \tau_j^i} 
\end{equation*}
At the origin, the patchy algorithm computes the exact partial derivatives of the optimal cost by Al'brekht's method, so $\hat{C}_j^0 = C_j^0$ for $0 \le j \le d+1$.
Therefore the theorem holds in the base case of the induction proof.
Now, suppose that at the $\ith{i}$ patch point in the sequence of consecutive patch points,
\begin{align*}
\norm{C_j^i - \hat{C}_j^i} \le \frac{L^i - 1}{L-1} T h^{d+2-j}
& & \text{and} & &
\frac{L^i - 1}{L-1} T  \le \Mmax
\end{align*}
for $0 \le j \le d+1$.
Then
\begin{align*}
\norm{\phi_j(C^i) - \phi_j(\hat{C}^i)} \le L \frac{L^i - 1}{L-1} T h^{d+2-j}
\end{align*}
for $0 \le j \le d+1$ since $\phi$ satisfies the Lipschitz condition.
Therefore, at the next patch point in the sequence, the difference between the partial derivatives is bounded by
\begin{equation*}
\norm{C_j^{i+1} - \hat{C}_j^{i+1}}
\le
L \frac{L^i - 1}{L-1} T h^{d+2-j} + T h^{d+2-j} \\
=
\frac{L^{i+1}-1}{L-1} T h^{d+2-j}
.
\end{equation*}
Thus, the conclusion of the lemma holds.
\end{proof}
\end{lemma}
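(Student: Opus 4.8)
The plan is to treat this as a discrete error-accumulation estimate, in direct analogy with the classical bound on the global error of a one-step ODE integrator: the error freshly committed at each patch point is amplified by a Lipschitz factor and then summed geometrically over the sequence of consecutive patch points. The argument is an induction on the sequence index $i$, and the first task is to isolate a clean one-step recursion for $\norm{C_j^{i+1} - \hat{C}_j^{i+1}}$.

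To obtain this recursion I would combine the two defining identities directly. By Definition~\ref{defn:loc_trunc_vec} the local truncation vector is $\tau_j^i = C_j^{i+1} - \phi_j(C^i)$, while by construction the patchy method produces $\hat{C}_j^{i+1} = \phi_j(\hat{C}^i)$. Subtracting gives $C_j^{i+1} - \hat{C}_j^{i+1} = \tau_j^i + \bigpars{\phi_j(C^i) - \phi_j(\hat{C}^i)}$, so the triangle inequality yields
\[
\norm{C_j^{i+1} - \hat{C}_j^{i+1}} \le \norm{\phi_j(C^i) - \phi_j(\hat{C}^i)} + \norm{\tau_j^i}.
\]
This separates the error newly introduced at $x^{i+1}$ (the truncation term, controlled by the hypothesis $\norm{\tau_j^i} \le T h^{d+2-j}$) from the error inherited from $x^i$ (the $\phi$ term, to be controlled by the Lipschitz condition).

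For the induction the base case $i=0$ is immediate: at the origin Al'brekht's method returns the exact Taylor coefficients, so $\hat{C}_j^0 = C_j^0$, the error vanishes, and this matches $\tfrac{L^0-1}{L-1}T h^{d+2-j}=0$. For the inductive step I would assume the stated bound at index $i$, which provides $\norm{C_j^i - \hat{C}_j^i} \le M h^{d+2-j}$ with multiplier $M = \tfrac{L^i-1}{L-1}T$ simultaneously for every order $j$. Because $M \le \Mmax$ and consecutive patch points lie within $h$, Definition~\ref{defn:lip_cond} applies and bounds the inherited term by $L M h^{d+2-j}$. Adding the truncation contribution $T h^{d+2-j}$ and using the algebraic identity $L\tfrac{L^i-1}{L-1}T + T = \tfrac{L^{i+1}-1}{L-1}T$ advances the bound to index $i+1$, closing the induction.

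The one genuine obstacle is ensuring that the Lipschitz condition may legitimately be invoked at every step, i.e., that the accumulated multiplier $\tfrac{L^i-1}{L-1}T$ never exceeds $\Mmax$ over the range of indices considered. Since this quantity is monotone in $i$, the condition defining $N$ is precisely what limits the number of admissible consecutive steps and keeps $M \le \Mmax$ throughout, which is why the conclusion is claimed only for $i$ up to $N$. A secondary bookkeeping point is the uniform use of $h$ in place of the actual step length $\norm{x^{i+1}-x^i}$: since every pair of consecutive patch points in the sequence lies within $h$ of one another and each exponent $d+2-j$ is positive, $h$ functions as a common worst-case radius in both the truncation hypothesis and the Lipschitz estimate.
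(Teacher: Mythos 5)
Your proposal is correct and follows essentially the same argument as the paper: the same triangle-inequality recursion from the definition of the local truncation vector, the same base case via exactness of Al'brekht's method at the origin, and the same inductive application of the Lipschitz condition with multiplier $\tfrac{L^i-1}{L-1}T$ capped by $\Mmax$. Your explicit remarks on why the index range stops at $N$ and on the uniform use of $h$ are sound and merely make explicit what the paper leaves implicit.
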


\subsection{Analytical bound on absolute error of the optimal cost}
\begin{theorem}
\label{thm:cmptd_exct_err_bnd}
Suppose there exists a maximum consecutive patch point distance $h>0$ such that $\phi$ satisfies the Lipschitz condition.
Also, suppose there exists a local truncation error constant $T$ such that for any consecutive patch points $x^i$ and $x^{i+1}$ in $\closedcompdomwoalb$ within a radius $h$ of each other, $\norm{\tau_j^i} \le T h^{d+2-j}$, where the degree of all the optimal cost approximating polynomials is $d+1$.
Finally, assume that the maximum patch diameter is proportional to the maximum consecutive patch point distance.
Then there exist constants $L$ and $K$ such that for any $x$ in the $\ith{i}$ patch in a sequence, the error between the exact and computed optimal cost is bounded by
\begin{equation*}
\abs{\pi(x) - \pi^i(x)} \le K \Bigl( \frac{L^i-1}{L-1} + 1 \Bigr) h^{d+2}
\end{equation*}
and the difference between corresponding $\ith{k}$ order partial derivatives is bounded by
\begin{equation*}
\Bigabs{
\pderivelip{\pi}{k}{x_{j_1}}{}{x_{j_k}}{}(x) 
-
\pderivelip{\pi^i}{k}{x_{j_1}}{}{x_{j_k}}{}(x)
}
\le K \Bigl( \frac{L^i-1}{L-1} + 1 \Bigr) h^{d+2-k}
.
\end{equation*}
\begin{proof}
The basic idea behind the proof is to write the true optimal cost as the sum of its Taylor polynomial and a remainder term, then apply Lemma \ref{lem:opt_cost_coef_diff_bnd} to bound the difference between the coefficients of $\pi^i$ and the coefficients of the Taylor polynomial.
By Lemma \ref{lem:opt_cost_coef_diff_bnd}, there is some Lipschitz constant $L$ such that the exact and computed $\ith{k}$ order partial derivatives of the optimal cost at $x^i$ are bounded by
\begin{equation*}
\norm{ C_k^i - \hat{C}_k^i }
\le
\frac{L^i-1}{L-1} T h^{d+2-k}
\end{equation*}
where $T$ is the local truncation constant from the theorem statement.
Let $\bar{x}$ be any point in the $\ith{i}$ patch in a sequence of consecutive patches, and let $D$ denote the maximum patch diameter for all the patches in the computational domain $\compdom$. 
By Lemma \ref{lem:poly_lip}, there exists $\Linh$ such that
\begin{equation*}
\Bigabs{
\pderivelip{}{k}{x_{j_1}}{}{x_{j_k}}{} \bigbracks{P(C^i - \hat{C}^i, x-x^i) }_{x=\bar{x}}
}
\le
\Linh \frac{L^i-1}{L-1} T D^{d+2-k}
\end{equation*}
By assumption, the optimal cost is smooth on $\compdom$, so it follows from the definition of the Taylor remainder constant $R_T$ that
\begin{equation*}
\Bigabs{
\pderivelip{}{k}{x_{j_1}}{}{x_{j_k}}{} \bigbracks{\pi(x-x^i) - P(C^i, x-x^i)}_{x=\bar{x}}
}
\le R_T D^{d+2-k}
.
\end{equation*}
Therefore
\begin{multline*}
\Bigabs{
\pderivelip{\pi}{k}{x_{j_1}}{}{x_{j_k}}{}(x)
-
\pderivelip{\pi^i}{k}{x_{j_1}}{}{x_{j_k}}{}(x)
} \\
\begin{aligned}
&\le
\Bigabs{
\pderivelip{}{k}{x_{j_1}}{}{x_{j_k}}{} \bigbracks{P(C^i - \hat{C}^i, x-x^i) }_{x=\bar{x}}
}
\\
&\phantom{\le \vert}
+ \Bigabs{
\pderivelip{}{k}{x_{j_1}}{}{x_{j_k}}{} \bigbracks{\pi(x-x^i) - P(C^i, x-x^i)}_{x=\bar{x}}
} \\
&\le
\Bigpars{\Linh \frac{L^i-1}{L-1} T + R_T} \Bigpars{\frac{D}{h}}^{d+2-k} h^{d+2-k}
.
\end{aligned}
\end{multline*}
The error bound from the theorem statement follows by setting $K =  (D/h)^{d+2} \max(T \Linh, R_T)$.
Our assumption that the patch diameter $D$ is proportional to the maximum consecutive patch point distance $h$ guarantees that $D/h$ is bounded from above by a constant.
\end{proof}
\end{theorem}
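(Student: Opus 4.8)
The plan is to write the exact optimal cost as its degree $d+1$ Taylor polynomial centered at the patch point $x^i$ plus a higher-order remainder, and to compare this decomposition term by term against the computed polynomial $\pi^i$. In the notation $P(C,x)$ of Lemma \ref{lem:poly_lip}, the computed cost on the $\ith{i}$ patch is $P(\hat{C}^i, x - x^i)$ and the exact Taylor polynomial of $\pi$ at $x^i$ is $P(C^i, x - x^i)$. For any $\bar{x}$ in the patch I would add and subtract $P(C^i, x - x^i)$ and apply the triangle inequality to each $\ith{k}$ order partial derivative, splitting the error into a \emph{coefficient} piece built from $P(C^i, x-x^i) - P(\hat{C}^i, x-x^i)$ and a \emph{truncation} piece $\pi(x) - P(C^i, x - x^i)$. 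Because $P$ is linear in its coefficient argument, the coefficient piece is exactly $P(C^i - \hat{C}^i, x - x^i)$, and the truncation piece is the Taylor remainder of $\pi$.

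For the coefficient piece I would feed the recursive estimate of Lemma \ref{lem:opt_cost_coef_diff_bnd}, namely $\norm{C_\ell^i - \hat{C}_\ell^i} \le \frac{L^i-1}{L-1} T h^{d+2-\ell}$, into Lemma \ref{lem:poly_lip}. The subtlety is the relevant length scale: $\bar{x}$ may lie anywhere in the patch, so only $\norm{\bar{x} - x^i} \le D$ is available, where $D$ is the maximum patch diameter. Since $h \le D$, the coefficient estimate implies $\norm{C_\ell^i - \hat{C}_\ell^i} \le \frac{L^i-1}{L-1} T D^{d+2-\ell}$, which is the hypothesis of Lemma \ref{lem:poly_lip} with multiplier $M = \frac{L^i-1}{L-1}T$ at scale $D$; the lemma then bounds the $\ith{k}$ derivative of the coefficient piece by $\Linh \frac{L^i-1}{L-1} T\, D^{d+2-k}$. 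For the truncation piece I would invoke the smoothness of $\pi$ on $\compdom$ (Assumption \ref{ass:smooth_pi}) to define a uniform Taylor-remainder constant $R_T$ with the property that the $\ith{k}$ derivative of $\pi(x) - P(C^i, x-x^i)$ is bounded by $R_T D^{d+2-k}$.

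Adding the two pieces yields $\bigl(\Linh \frac{L^i-1}{L-1}T + R_T\bigr) D^{d+2-k}$. The final step is algebraic regrouping: I would factor out $\max(T\Linh, R_T)$ to recognize the factor $\frac{L^i-1}{L-1} + 1$, and rewrite $D^{d+2-k}$ as $(D/h)^{d+2-k} h^{d+2-k}$. Here the proportionality hypothesis enters decisively: since $D$ is proportional to $h$, the ratio $D/h$ is bounded above by a constant independent of $i$ and of the patch, so the dimensionless factor can be absorbed, giving the stated bound with $K = (D/h)^{d+2}\max(T\Linh, R_T)$. Taking $k=0$ recovers the cost estimate.

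I expect the main obstacle to be the consistent bookkeeping of the two distinct length scales. The recursive coefficient bound of Lemma \ref{lem:opt_cost_coef_diff_bnd} is expressed in the maximum consecutive patch point distance $h$, whereas the evaluation point $\bar{x}$ ranges over an entire patch and is controlled only by the diameter $D$; keeping the exponents $d+2-k$ aligned across the two lemmas and Taylor's theorem, and ensuring the $h$-to-$D$ conversion introduces only a patch-independent constant, is the delicate part. Once the scales are reconciled, the estimate is a direct assembly of the two preceding lemmas with the standard Taylor remainder bound.
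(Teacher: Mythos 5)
Your proposal is correct and follows essentially the same route as the paper's proof: the same decomposition into the coefficient piece $P(C^i-\hat{C}^i,x-x^i)$ and the Taylor remainder, the same use of Lemmas \ref{lem:opt_cost_coef_diff_bnd} and \ref{lem:poly_lip}, and the same final constant $K=(D/h)^{d+2}\max(T\Linh,R_T)$. Your explicit remark on converting the coefficient bound from scale $h$ to scale $D$ via $h\le D$ is a point the paper leaves implicit, but it is the same argument.
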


In sections \ref{sec:lte_1}-\ref{sec:lip_k}, we list sufficient conditions that guarantee that the local truncation error of the patchy algorithm has the desired order, and the patchy algorithm satisfies the Lipschitz condition, thus guaranteeing the error bound in the previous theorem.

\section{Local truncation error for first order partial derivatives of the optimal cost} \label{sec:lte_1}
We state Theorem \ref{thm:phi1_lte_bound} whose content implies the first order local truncation vector is bounded by $\norm{\tau_1^i} \le T h^{d+1}$, and defer its proof until after we have established some supporting lemmas.
In this section, we always assume that the conditions on the optimal control problem in \S \ref{sec:drvtn:opt_cntrl_ass} hold.
\begin{theorem}
\label{thm:phi1_lte_bound}
Suppose $\pi$ solves the HJB equations \eqref{eqn:HJB_PDE} under the assumptions in \S \ref{sec:drvtn:opt_cntrl_ass}.
Let $\bar{\pi}$ denote the Taylor polynomial of the optimal cost centered at the previous patch point, and let $\tpderiv{\pi^{i+1}}{}{x}(x^{i+1})$ denote the computed gradient of the optimal cost at $x^{i+1}$, computed from the coefficients of $\bar{\pi}$.
There exists a ball centered at the origin in which the Al'brekht patch must lie, a maximum consecutive patch point distance, and a constant $T$ such that
\begin{equation}
\label{eqn:lip_stmnt}
\Bignorm{\pderiv{\pi}{}{x}(x^{i+1}) - \pderiv{\pi^{i+1}}{}{x}(x^{i+1}) } %
\le
T \Bignorm{\pderiv{\pi}{}{x}(x^{i+1}) - \pderiv{\bar{\pi}}{}{x}(x^{i+1}) }.
\end{equation}
whenever the distance from $x^{i+1}$ to the previous patch point is less than the maximum consecutive patch point distance
. 
The maximum consecutive patch point distance $h_T$ must be less than the consecutive patch point distance that we derive in Lemma \ref{lem:dRdT_term1}, and  small enough to ensure $\norm{\tdpidx(x^{i+1}) - \tdpibardx(x^{i+1})} \le M_{\Delta}$, where
\begin{align*}
M_{\Delta} &< \inf_{\compdomwoalb}\Bignorm{\pderiv{\pi}{}{x}(x)}
\\
M_{\Delta} &< \frac{\inf_{\compdomwoalb} \abs{\pderiv{\pi}{}{x}(x) f(x)}}{\sup_{\compdomwoalb} \norm{f(x)}}
\\
M_{\Delta} &< \frac{\inf_{\compdomwoalb} \abs{\pderiv{\pi}{}{x}(x) g(x)}}{\sup_{\compdomwoalb} \norm{g(x)}}
\\
M_{\Delta}
&<
\frac{1}{2}
\sup_{\closedcompdomwoalb}
\Bigpars{
\Bignorm{\dpidx(x)}\bigpars{\norm{f(x)}^2 + 2 \frac{q(x)}{r(x)}\norm{g(x)}^2} 
\\
&\phantom{<}+ \norm{f(x)}^2 + \norm{g(x)}^2
}
\inf_{\closedcompdomwoalb} \Bigpars{\dpidx(x)\dot{x}}^2
\end{align*}
The assumptions in \S \ref{sec:drvtn:opt_cntrl_ass} guarantee that each right hand side of the previous four inequalities is strictly positive.
\end{theorem}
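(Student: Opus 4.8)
The plan is to realize the computed gradient $\pderiv{\pi^{i+1}}{}{x}(x^{i+1})$ as the image of the input gradient $\pderiv{\bar{\pi}}{}{x}(x^{i+1})$ under a single \emph{gradient-update map}, and then show that map is Lipschitz near the true gradient with the constant $T$. Concretely, for a nonzero $p$ define $\mathcal{G}(p)$ by the recipe of \eqref{eqn:drvtn_nrmlzd_opt_cost_grdnt}--\eqref{eqn:drvtn_crnt_optml_cost_grdnt_optml_cntrl}: set $\mathbf{n} = p/\norm{p}$, form $f_n = \mathbf{n}\cdot f(x^{i+1})$ and $g_n = \mathbf{n}\cdot g(x^{i+1})$, let $z_+$ be the positive root of the scalar HJB quadratic \eqref{eqn:drvtn_sclr_hjb_eqn}, and put $\mathcal{G}(p) = z_+\,\mathbf{n}$; then $\pderiv{\pi^{i+1}}{}{x}(x^{i+1}) = \mathcal{G}\bigl(\pderiv{\bar{\pi}}{}{x}(x^{i+1})\bigr)$ by construction. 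The first step is to check that $\mathcal{G}$ fixes the true gradient $p^\ast \equiv \pderiv{\pi}{}{x}(x^{i+1})$: substituting the feedback law \eqref{eqn:HJB_PDE2} into \eqref{eqn:HJB_PDE1} exactly as in the derivation shows that $\norm{p^\ast}$ is a root of the quadratic built from the true direction $p^\ast/\norm{p^\ast}$, and the strict Lyapunov hypothesis forces this root to be the unique positive one $z_+$, so $\mathcal{G}(p^\ast) = p^\ast$. Consequently the left side of \eqref{eqn:lip_stmnt} equals $\norm{\mathcal{G}(p^\ast) - \mathcal{G}(p)}$ with $p \equiv \pderiv{\bar{\pi}}{}{x}(x^{i+1})$, and \eqref{eqn:lip_stmnt} reduces to a local Lipschitz estimate for $\mathcal{G}$.

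By the mean value inequality it then suffices to bound the Jacobian: $\norm{\mathcal{G}(p^\ast) - \mathcal{G}(p)} \le \bigl(\sup \norm{D\mathcal{G}}\bigr)\,\norm{p^\ast - p}$, the supremum being over the segment joining $p$ and $p^\ast$, and I would take $T$ to be such a bound. The constant $M_{\Delta}$ serves to confine this segment to a region on which $D\mathcal{G}$ is controlled: requiring $\norm{\tdpidx(x^{i+1}) - \tdpibardx(x^{i+1})} \le M_{\Delta}$ keeps $p$ within $M_{\Delta}$ of $p^\ast$, and the maximum consecutive patch point distance $h_T$ is chosen small enough that this holds whenever $x^{i+1}$ lies within $h_T$ of the previous patch point, since $\bar{\pi}$ is the Taylor polynomial of $\pi$ centered there and its gradient error is $\mathcal{O}(\norm{x^{i+1}-x^i}^{d+1})$. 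The ball in which the Al'brekht patch must lie, together with the standing assumptions, makes the infima and suprema in the four $M_{\Delta}$ bounds finite and the infima strictly positive; in particular $\compdomwoalb$ excludes a neighborhood of the origin, where $\dpidx$ vanishes, because $\mathcal{A}$ is a sublevel set containing the origin.

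I would bound $D\mathcal{G}$ by the chain rule, splitting it into the contributions the four inequalities on $M_{\Delta}$ are designed to control. Differentiating the normalization $p \mapsto \mathbf{n}$ produces a factor $1/\norm{p}$, bounded because $M_{\Delta} < \inf_{\compdomwoalb}\norm{\dpidx}$ keeps $\norm{p}$ away from zero; differentiating $f_n$ and $g_n$ contributes $\norm{f(x^{i+1})}$ and $\norm{g(x^{i+1})}$, whose control against $\abs{\dpidx f}$ and $\abs{\dpidx g}$ is the content of the second and third inequalities. The delicate contribution is $\partial z_+/\partial\mathbf{n}$: differentiating the positive root of \eqref{eqn:drvtn_sclr_hjb_eqn} introduces a factor $1/\sqrt{\Delta}$, where $\Delta = f_n^2 + 2 g_n^2 q(x^{i+1})/r(x^{i+1})$ is the discriminant. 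Here the strict Lyapunov hypothesis does the essential work: a direct computation at the true gradient gives $\sqrt{\Delta} = \abs{\dpidx(x^{i+1})\,\dot{x}}/\norm{\dpidx(x^{i+1})}$, which is strictly positive precisely because $\dpidx\,\dot{x} < 0$. This lower bound is the estimate packaged in Lemma \ref{lem:dRdT_term1}, and the fourth (most involved) inequality on $M_{\Delta}$, with its reserve $\inf_{\closedcompdomwoalb}(\dpidx\,\dot{x})^2$, is calibrated so that $\Delta$, hence $1/\sqrt{\Delta}$, stays bounded along the whole segment when the direction is perturbed by at most $M_{\Delta}$.

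The main obstacle is exactly this last term: keeping the discriminant $\Delta$ bounded away from zero uniformly over the perturbed directions, so that $\partial z_+/\partial\mathbf{n}$ stays finite. Everything else is a routine, if tedious, differentiation, but the $1/\sqrt{\Delta}$ factor is where a method lacking the strict Lyapunov property would break down, and matching the perturbation allowance $M_{\Delta}$ to the discriminant reserve is the crux of the fourth inequality. Finally, strict positivity of the four right-hand sides follows from compactness of $\closedcompdomwoalb$ together with the assumptions $r > 0$, $q \ge 0$, and the strict Lyapunov property, which force the relevant continuous functions to attain positive minima on a set that excludes the origin.
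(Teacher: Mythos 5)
Your proposal is correct and follows essentially the same route as the paper: the paper also exploits the fixed-point property $z\bigl(\dpidx(x^{i+1})\bigr)/\norm{\dpidx(x^{i+1})}=1$, applies the mean value theorem along the segment joining $\dpibardx(x^{i+1})$ to $\dpidx(x^{i+1})$ (via the scalar helper $R(t)$ of \eqref{eqn:R_defn} rather than your vector-valued map $\mathcal{G}$, a cosmetic difference), and controls the derivative by exactly the three ingredients you identify --- the denominator bounded away from zero, the ratio $z(w)/\norm{w}$, and $\dzdw$ via the discriminant identity $f_n^2+2\tfrac{q}{r}g_n^2=\bigl(\dpidx(x)\dot{x}\bigr)^2/\norm{\dpidx(x)+\Delta}^2+\mathcal{O}(\norm{\Delta})$. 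The only slip is attributional: the discriminant lower bound is the content of Lemmas \ref{lem:dzdw_exists} and \ref{lem:dzdw_bounded}, not Lemma \ref{lem:dRdT_term1}, which instead supplies the lower bound on $\norm{\dpidx(x)+t(\dpidx(x)-\dpibardx(x))}$ along the segment.
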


\begin{corollary}
\label{cor:phi1_lte_bound}
There exists a maximum consecutive patch point distance $h_T$ and local truncation constant $T$ such that for any two consecutive patch points $x^i$ and $x^{i+1}$ in the computational domain $\compdom$ within a distance of $h_T$ of each other,
\begin{equation}
\Bignorm{\pderiv{\pi}{}{x}(x^{i+1}) - \pderiv{\pi^{i+1}}{}{x}(x^{i+1}) } %
\le
T \norm{x^{i+1} - x^i}^{d+1}
.
\end{equation}
where $\tpderiv{\pi^{i+1}}{}{x}(x^{i+1})$ denotes the computed gradient of the optimal cost at $x^{i+1}$, computed from the coefficients of the Taylor polynomial centered at the previous patch point.
Equivalently
\begin{equation*}
\norm{\tau_1^i} = 
\Bignorm{C_1^{i+1} - \phi_1(C^i) }
\le
T \norm{x^{i+1} - x^i}^{d+1}
.
\end{equation*}
\end{corollary}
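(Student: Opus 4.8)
The plan is to obtain the corollary as an essentially immediate consequence of Theorem~\ref{thm:phi1_lte_bound}, whose conclusion \eqref{eqn:lip_stmnt} already bounds the quantity of interest in terms of $\Bignorm{\pderiv{\pi}{}{x}(x^{i+1}) - \pderiv{\bar{\pi}}{}{x}(x^{i+1})}$. The only additional ingredient is a Taylor remainder estimate that converts this right-hand side into a power of the patch spacing $\norm{x^{i+1}-x^i}$. First I would recall that $\bar{\pi}$ is the degree $d+1$ Taylor polynomial of $\pi$ centered at the previous patch point $x^i$; differentiating, $\pderiv{\bar{\pi}}{}{x}$ is exactly the degree $d$ Taylor polynomial of the true gradient $\pderiv{\pi}{}{x}$ centered at $x^i$. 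Hence $\pderiv{\pi}{}{x}(x^{i+1}) - \pderiv{\bar{\pi}}{}{x}(x^{i+1})$ is precisely the Taylor remainder of the gradient evaluated at $x^{i+1}$.

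Next I would invoke Assumption~\ref{ass:smooth_pi}, which guarantees that $\pi$ has continuous partial derivatives up to order $d+2$ on $\compdom$. The entries of the order $d+1$ partial derivatives of $\pderiv{\pi}{}{x}$ are order $d+2$ partial derivatives of $\pi$, hence continuous and therefore bounded on the compact set $\compdom$. Provided the maximum consecutive patch point distance $h_T$ is small enough that the segment joining $x^i$ and $x^{i+1}$ stays inside $\compdom$, Taylor's theorem with remainder yields a constant $R_1$, depending only on this supremum and on $n$ and $d$, with
\begin{equation*}
\Bignorm{\pderiv{\pi}{}{x}(x^{i+1}) - \pderiv{\bar{\pi}}{}{x}(x^{i+1})} \le R_1 \norm{x^{i+1}-x^i}^{d+1}.
\end{equation*}
Combining this with \eqref{eqn:lip_stmnt} of Theorem~\ref{thm:phi1_lte_bound} gives
\begin{equation*}
\Bignorm{\pderiv{\pi}{}{x}(x^{i+1}) - \pderiv{\pi^{i+1}}{}{x}(x^{i+1})} \le T R_1 \norm{x^{i+1}-x^i}^{d+1},
\end{equation*}
so relabeling the product $T R_1$ as the local truncation constant $T$ of the corollary proves the first displayed inequality, valid whenever $\norm{x^{i+1}-x^i}\le h_T$.

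Finally I would reconcile this with the local truncation vector form by unwinding the definitions of \S\ref{sec:notation}. The vector $C_1^{i+1}$ of exact first order partial derivatives at $x^{i+1}$ is $\pderiv{\pi}{}{x}(x^{i+1})$, while $\phi_1(C^i)$ is by construction the computed gradient at $x^{i+1}$ generated from the \emph{exact} coefficients at $x^i$; since those exact coefficients are precisely the coefficients of the Taylor polynomial $\bar{\pi}$, we have $\phi_1(C^i)=\pderiv{\pi^{i+1}}{}{x}(x^{i+1})$ for the $\pi^{i+1}$ computed from $\bar{\pi}$. Thus $\norm{\tau_1^i}=\Bignorm{\pderiv{\pi}{}{x}(x^{i+1})-\pderiv{\pi^{i+1}}{}{x}(x^{i+1})}$, and the second display follows from the first. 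I expect the only delicate point to be the uniformity of the remainder constant $R_1$: it requires the line segment between consecutive patch points to lie in the region of smoothness $\compdom$, which is what forces the restriction to patch points within $\compdom$ and a sufficiently small $h_T$. All the genuine analytic content is already carried by Theorem~\ref{thm:phi1_lte_bound}.
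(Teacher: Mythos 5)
Your proposal is correct and follows the route the paper intends: the corollary is stated without a separate proof precisely because it is Theorem~\ref{thm:phi1_lte_bound} combined with the standard Taylor remainder bound $\Bignorm{\tpderiv{\pi}{}{x}(x^{i+1}) - \tpderiv{\bar{\pi}}{}{x}(x^{i+1})} = \mathcal{O}(\norm{x^{i+1}-x^i}^{d+1})$, which holds by Assumption~\ref{ass:smooth_pi} on the compact computational domain. Your identification of $\phi_1(C^i)$ with the gradient computed from the exact Taylor coefficients, and hence of the two displayed inequalities, is exactly the reading of Definitions~\ref{defn:loc_trunc_vec} and the notation of \S\ref{sec:notation} that the paper relies on.
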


It is helpful to think of the quantities $f_n$ and $g_n$ from \eqref{eqn:drvtn_fn_gn_defn}, and the solution to the quadratic equation $z$ from \eqref{eqn:drvtn_sclr_hjb_eqn} as functions of a nonzero vector $w$, so we define
\begin{align}
\label{eqn:dpi_dx_ptchy_defns}
\begin{split}
f_n(w) &\equiv \frac{w}{\norm{w}} \cdot f(x^{i+1}) \\
g_n(w) &\equiv \frac{w}{\norm{w}} \cdot g(x^{i+1}) \\
z(w) &\equiv
\begin{cases}
-\frac{q(x^{i+1})}{f_n(w)} & g_n(w) = 0 \\
\frac{f_n(w) + \sqrt{f_n(w)^2 + 2\frac{q(x^{i+1})}{r(x^{i+1})} g_n(w)^2} }{g_n(w)^2} r(x^{i+1}) & g_n(w) \ne 0
\end{cases}
\end{split}
\end{align}

As a consequence of the assumption that $\tlyupderiv{(x)} < 0$, $f_n(w)$ and $g_n(w)$ cannot both be zero away from the origin if $w$ is sufficiently close to $\tpderiv{\pi}{}{x}(x^{i+1})$.
Therefore, $z(w)$ is well defined.
The difference between the true and computed first order partial derivatives of $\pi$ can be rewritten as
\begin{equation*} %
\pderiv{\pi}{}{x}(x^{i+1}) - \pderiv{\pi^{i+1}}{}{x}(x^{i+1}) 
= 
\pderiv{\pi}{}{x}(x^{i+1})
- \frac{z(\pderiv{\bar{\pi}}{}{x}(x^{i+1}))}{\norm{\pderiv{\bar{\pi}}{}{x}(x^{i+1})}} \pderiv{\bar{\pi}}{}{x}(x^{i+1})
\end{equation*}
and the main inequality \eqref{eqn:lip_stmnt} from the statement of Theorem \ref{thm:phi1_lte_bound} will follow from the fact that
\begin{equation} \label{eqn:z_over_w_bound} %
\frac{z(\pderiv{\bar{\pi}}{}{x}(x^{i+1}))}{\norm{\pderiv{\bar{\pi}}{}{x}(x^{i+1})}} = 
1 + \mathcal{O} \Bigpars{ \Bignorm{ \pderiv{\pi}{}{x}(x^{i+1}) - \pderiv{\bar{\pi}}{}{x}(x^{i+1}) } }
\end{equation}
as long as the distance from $x^{i+1}$ to the previous patch point is less than the maximum consecutive patch point distance.

To prove \eqref{eqn:z_over_w_bound}, we define the helper function
\begin{equation} \label{eqn:R_defn}
R(t)
\equiv
\frac{
z \Bigpars{\pderiv{\pi}{}{x}(x^{i+1}) + t \Bigpars{ \pderiv{\bar{\pi}}{}{x}(x^{i+1}) - \pderiv{\pi}{}{x}(x^{i+1})}}
}
{
\Bignorm{\pderiv{\pi}{}{x}(x^{i+1}) + t \Bigpars{ \pderiv{\bar{\pi}}{}{x}(x^{i+1}) - \pderiv{\pi}{}{x}(x^{i+1})}}
}
\end{equation}
so that we can appeal to the mean value theorem, and rewrite the left-hand side of \eqref{eqn:z_over_w_bound} as $1 + \tfrac{d R}{d t}(\xi)$ for some $0 < \xi < 1$.
If $R$ is differentiable, then its derivative is bounded by
\begin{multline} \label{eqn:dRdT_bnd}
\Bigabs{\frac{dR}{dt}(t)} 
\le
\\
\Biggpars{ 
\Bignorm{\dzdw \Bigpars{\dpidx(x) + t\Bigpars{\dpibardx(x) - \dpidx(x)}} }
+ 
\frac{z\Bigpars{\dpidx(x) + t\Bigpars{\dpibardx(x) - \dpidx(x)}}}{\Bignorm{ \dpidx(x) + t\Bigpars{\dpibardx(x) - \dpidx(x)} }}
}
\\
\times
\frac{\Bignorm{\dpibardx(x) - \dpidx(x)}}{\Bignorm{\dpidx(x) + t\Bigpars{\dpibardx(x) - \dpidx(x)}}}
\end{multline}
Our assumption that the optimal cost is a strict Lyupanov function is the key fact that guarantees that there exists a maximum consecutive patch point distance such that $\tfrac{d R}{d t}(t)$ is well defined for $0 \le t \le 1$ and \eqref{eqn:z_over_w_bound} holds.
This is the main content of Lemmas \ref{lem:dRdT_term1}, \ref{lem:z_over_dpidx_bounded}, \ref{lem:dzdw_exists}, and \ref{lem:dzdw_bounded}.

The next lemma guarantees that the denominators in the bound on $\tfrac{d R}{d t}(t)$ \eqref{eqn:dRdT_bnd} are bounded away from zero.

\begin{lemma} \label{lem:dRdT_term1} %
Let $\bar{\pi}$ denote the Taylor polynomial centered at the previous patch point.
If $\pi$ has continuous first order partial derivatives on $\compdom$ and is a strict Lyapunov function on $\compdom$, then there exists a maximum consecutive patch point distance $h'$ and constant $T'$, such that
\begin{equation*}
\frac{\Bignorm{\dpidx(x) - \dpibardx(x)}}
{\Bignorm{\dpidx(x) + t \Bigpars{\dpidx(x) - \dpibardx(x)}}}
\le 
T' \Bignorm{\pderiv{\pi}{}{x}(x) - \pderiv{\bar{\pi}}{}{x}(x)}
.
\end{equation*}
whenever $0 \le t \le 1$ and the distance from $x$ to the previous patch point is less than $h'$.

\begin{proof}
We split the proof into two cases, depending on whether the previous patch point is the origin or not.
In either case, we will use the fact that if $0 \le \norm{\tdpidx(x) - \tdpibardx(x)} < \norm{\tdpidx(x)}$, then
\begin{equation*}
\frac{\Bignorm{\dpidx(x) - \dpibardx(x)}}
{\Bignorm{\dpidx(x) + t \Bigpars{\dpidx(x) - \dpibardx(x)}}}
\le
\frac{\Bignorm{\dpidx(x) - \dpibardx(x)}}{\norm{\dpidx(x)} - \norm{\dpidx(x) - \dpibardx(x)}} 
\end{equation*}
for all $0 \le t \le 1$.

If the previous patch point is the origin, then we need establish the existence of a deleted neighborhood of radius $h'$ centered at the origin such that
\begin{equation*}
 \Biggabs{ \Bignorm{\dpidx(x)} - \Bignorm{\dpidx(x) - \pderiv{\bar{\pi}}{}{x}(x)} } 
> 0
\end{equation*}
for all $x$ in the neighborhood.
$\dpidx$ is nonzero away from the origin under the assumption that $\pi$ is a strict Lyapunov function on $\compdom$,
so we need to establish that the  difference between the gradients of $\pi$ and $\bar{\pi}$ goes to zero faster than the gradient of $\pi$ as $x$ goes to zero.
The assumption (\S \ref{eqn:drvtn_dynamics_lagrangian_form}, Assumption \ref{ass:nice_optml_cntrl_prob}) that $Q \succ 0$ guarantees that the leading order linear term of $\tpderiv{\pi}{}{x}(x)$ is nonzero for all nonzero $x$.
Thus $\norm{\tdpidx(x)-\tdpibardx(x)} / \norm{\tdpidx(x)} \rightarrow 0$ as  $x \rightarrow 0$,
so there is a radius $h'>0$ such that 
\begin{equation*}
\Bignorm{\dpidx(x)-\dpibardx(x)} <  \Bignorm{ \dpidx(x) }
\text{ for all $x$ in }
0 < \norm{x} \le h'.
\end{equation*}
Since $\tdpidx$ and $\tdpibardx(x)$ are continuous and $r \le \norm{x} \le h'$ is compact, then for any $r$ in $0 < r < h'$,
\begin{equation*} %
\sup_{r \le \norm{x} \le h'}
\Biggabs{ \Bignorm{\dpidx(x)} - \Bignorm{\dpidx(x) - \pderiv{\bar{\pi}}{}{x}(x)} }^{-1}
<
\infty
.
\end{equation*}
Thus, the lemma holds if the previous patch point is the origin, and the distance from $x^{i+1}$ to the origin is no greater than $h'$.

For this lemma to apply to the patchy method, the Al'brekht patch $\mathcal{A}$ must lie in the neighborhood of the origin of radius $h'$,
and the neighborhood of radius $r$ must lie completely inside the Al'brekht patch.

We now consider the case where the previous patch point $x^i$ is not the origin, so its distance to the origin is at least $r$.
$\norm{\tdpidx(x)}$ is bounded away from zero on $\closedcompdomwoalb$, and $\lim_{x \rightarrow x^i} \norm{\tdpidx(x) - \tdpibardx(x)} = 0$, so there exists an $h'$ such that
\begin{equation*}
\sup_{\substack{\closedcompdomwoalb \\ \norm{x-x^i} \le h'}} 
      \Biggabs{ \Bignorm{\dpidx(x)} - \Bignorm{\dpidx(x) - \pderiv{\bar{\pi}}{}{x}(x)} }^{-1}
< \infty
\end{equation*}
and we have established the validity of the lemma when the previous patch point is not the origin.
\end{proof}
\end{lemma}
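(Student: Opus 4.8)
The plan is to notice that, away from the trivial situation $\dpidx(x) = \dpibardx(x)$, the claimed inequality is obtained simply by cancelling the common factor $\Bignorm{\dpidx(x) - \dpibardx(x)}$ from the numerator on the left and from the right-hand side. What remains to prove is therefore the single uniform lower bound
\[
\Bignorm{\dpidx(x) + t\Bigpars{\dpidx(x) - \dpibardx(x)}} \ge \frac{1}{T'}
\]
for all $0 \le t \le 1$ and all $x$ within distance $h'$ of the previous patch point; the whole lemma reduces to bounding this denominator away from zero. To do so I would first apply the reverse triangle inequality together with $t \le 1$ to obtain
\[
\Bignorm{\dpidx(x) + t\Bigpars{\dpidx(x) - \dpibardx(x)}} \ge \Bignorm{\dpidx(x)} - \Bignorm{\dpidx(x) - \dpibardx(x)},
\]
which is positive exactly when the gradient difference is strictly smaller than the gradient itself. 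The task then becomes showing that $\Biggabs{\Bignorm{\dpidx(x)} - \Bignorm{\dpidx(x) - \dpibardx(x)}}^{-1}$ is finite uniformly near the previous patch point, and I would split this into the two cases distinguished in the lemma.

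When the previous patch point is the origin, $\bar{\pi}$ is the degree $d+1$ Taylor polynomial of $\pi$, so $\dpidx - \dpibardx$ vanishes to order $d+1 \ge 2$ as $x \to 0$. Assumption \ref{ass:nice_optml_cntrl_prob} ($Q \succ 0$) forces the leading linear term of $\dpidx$ to be nonzero, so $\Bignorm{\dpidx(x)}$ vanishes only to first order; hence the ratio $\Bignorm{\dpidx(x) - \dpibardx(x)} / \Bignorm{\dpidx(x)} \to 0$ as $x \to 0$. I would choose $h'$ so that this ratio stays below $1$ on the punctured ball $0 < \norm{x} \le h'$, which keeps the denominator positive there, and then invoke compactness of an annulus $r \le \norm{x} \le h'$ — with the Al'brekht patch $\albpatch$ shielding the origin — together with continuity of the gradients, to obtain a finite supremum of the reciprocal and hence a valid $T'$.

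When the previous patch point $x^i \ne 0$, the strict Lyapunov assumption gives $\norm{\dpidx(x)}$ bounded below by a positive constant on the compact set $\closedcompdomwoalb$, while $\Bignorm{\dpidx(x) - \dpibardx(x)} \to 0$ as $x \to x^i$ because $\bar{\pi}$ is the Taylor polynomial centered at $x^i$. Choosing $h'$ so that the gradient difference stays below half that lower bound again bounds the reciprocal uniformly on $\closedcompdomwoalb \cap \{\norm{x - x^i} \le h'\}$, and I would finish by taking the smaller of the two $h'$ and the larger of the two $T'$.

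The main obstacle is the origin case. Elsewhere one has $\norm{\dpidx}$ bounded away from zero and the estimate is essentially a compactness argument, but near the origin $\dpidx$ itself degenerates, so the denominator $\Bignorm{\dpidx} - \Bignorm{\dpidx - \dpibardx}$ is a difference of two quantities that both tend to zero. The estimate survives only because the remainder gradient $\dpidx - \dpibardx$ vanishes to strictly higher order than $\dpidx$, which is exactly what $Q \succ 0$ guarantees, and the argument must separate the origin by means of the Al'brekht patch rather than attempt to use compactness on the full closed ball.
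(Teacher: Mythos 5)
Your proposal is correct and follows essentially the same route as the paper's proof: reduce to a uniform positive lower bound on the denominator via the reverse triangle inequality, then split into the origin case (where $Q \succ 0$ forces the Taylor remainder gradient to vanish to strictly higher order than $\dpidx$ itself, with the Al'brekht patch shielding the origin and compactness handling the annulus) and the non-origin case (where $\norm{\dpidx}$ is bounded below on $\closedcompdomwoalb$ and a compactness argument suffices). No gaps.
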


Our next task is to establish the existence of $M_{\Delta}$ so that $z(\tdpidx(x) + \Delta)/\norm{\tdpidx(x) + \Delta}$ is uniformly bounded for all $\norm{\Delta} \le M_{\Delta}$ and all $x$ in $\compdom$.
This is the content of lemma \ref{lem:z_over_dpidx_bounded}.

\begin{lemma} \label{lem:z_over_dpidx_bounded}
\begin{equation*}
\sup_{\substack{\closedcompdomwoalb \\ \norm{\Delta} \le M_{\Delta}}}
     \frac{\abs{z(\pderiv{\pi}{}{x}(x) + \Delta)}}{\norm{\pderiv{\pi}{}{x}(x) + \Delta}}
< \infty
\end{equation*}
for any nonzero $M_{\Delta}$ satisfying
\begin{equation} \label{eqn:ass_small_dpidxg}
\begin{aligned}
M_{\Delta} & < \inf_{\compdomwoalb}\Bignorm{\pderiv{\pi}{}{x}(x)}
\\
M_{\Delta} &< \frac{\inf_{\mathcal{S}} \abs{\pderiv{\pi}{}{x}(x) f(x)}}{\sup_{\mathcal{S}} \norm{f(x)}}
\\
M_{\Delta} &< \frac{\inf_{\mathcal{S}^C} \abs{\pderiv{\pi}{}{x}(x) g(x)}}{\sup_{\mathcal{S}^C} \norm{g(x)}}
\end{aligned}
\end{equation}
where
\begin{equation*}
\mathcal{S} \equiv
\Biggl\{
x \in \closedcompdomwoalb \; \Bigg\vert \; \Bigabs{\pderiv{\pi}{}{x}(x) g(x)}
\le 
\sqrt{\frac{1}{2} \inf_{\closedcompdomwoalb} r(x) \inf_{\closedcompdomwoalb} \Bigabs{\lyupderiv{(x)}}}
\Biggr\}.
\end{equation*}

The assumption that $\pi$ is a strict Lyupanov function and the definition of $\mathcal{S}$ implies that both $\sup_{\mathcal{S}} \norm{f(x)}$ and $\sup_{\mathcal{S}^C} \norm{g(x)}$ are strictly greater than zero, so $M_{\Delta}$ is well defined.

\begin{proof}
Let $w = \tpderiv{\pi}{}{x}(x) + \Delta$, if $w \cdot g(x) \ne 0$, then by the formula for $z(w)$ in \eqref{eqn:dpi_dx_ptchy_defns},
\begin{equation} \label{eqn:z_over_w}
\frac{z(w)}{\norm{w}}
=
\frac{w \cdot f(x) + \sqrt{(w \cdot f(x))^2 + 2\frac{q(x)}{r(x)} (w \cdot g(x))^2 }}
{ (w \cdot g(x))^2} r(x)
\end{equation}
Since $(\tpderiv{\pi}{}{x}(x) + \Delta) g(x)$ appears in the denominator of \eqref{eqn:z_over_w}, we split the proof into the two cases where $x$ is in  $\mathcal{S}$ or its complement.

If $x$ is in $\mathcal{S}$, then by the definition of $\mathcal{S}$ and the assumption that $\pi$ is a strict Lyupanov function
\begin{equation*}
\pderiv{\pi}{}{x}(x) f(x)
\le
-\frac{1}{2} \inf_{\closedcompdomwoalb} \Bigabs{\lyupderiv{(x)}}
<
0                  
.
\end{equation*}
Therefore, if $M_{\Delta}$ satisfies the second condition in \eqref{eqn:ass_small_dpidxg}, then there is a strictly negative uniform upper bound on $(\tpderiv{\pi}{}{x}(x) + \Delta)f(x)$ for all $x$ in $\mathcal{S}$ and all $\norm{\Delta} \le M_{\Delta}$.
It follows that for all $x$ in $\mathcal{S}$, the square root term in \eqref{eqn:z_over_w} has the Taylor expansion with remainder
\begin{equation*}
\sqrt{(w \cdot f(x))^2 + 2\frac{q(x)}{r(x)} (w \cdot g(x))^2 }
=
-(w \cdot f(x))\Biggpars{1 + \frac{q(x)}{r(x)} \Bigpars{\frac{w \cdot g(x)}{w \cdot f(x)}}^2 \frac{1}{\sqrt{1 + \xi}}}
\end{equation*}
for some $\xi \ge 0$.
So, for all $x$ in $\mathcal{S}$ and all $\norm{ \Delta } \le M_{\Delta}$
\begin{equation} \label{eqn:z_over_dpidx_bounded_S_bnd}
\frac{\abs{z(\pderiv{\pi}{}{x}(x) + \Delta)}}{\norm{\pderiv{\pi}{}{x}(x) + \Delta)}}
=
\frac{q(x)}{(\pderiv{\pi}{}{x}(x) + \Delta) f(x)} \frac{1}{\sqrt{1 + \xi}}
\le
\frac{q(x)}{(\pderiv{\pi}{}{x}(x) + \Delta) f(x)}
\end{equation}
which also holds in the case where $(\pderiv{\pi}{}{x}(x) + \Delta) g(x) = 0$, and consequently $z(\pderiv{\pi}{}{x}(x) + \Delta)$ solves a linear equation \eqref{eqn:drvtn_sclr_hjb_eqn}.
The supremum of the right-hand side of \eqref{eqn:z_over_dpidx_bounded_S_bnd} over  $\mathcal{S}$ and all $\Delta \le M_{\Delta}$ is finite.
The first two conditions on $M_{\Delta}$ \eqref{eqn:ass_small_dpidxg} guarantee that the infimum of $\norm{(\pderiv{\pi}{}{x}(x) + \Delta) f(x)}$ is strictly greater than zero over $\mathcal{S}$ and all $\Delta$ bounded above by $M_{\Delta}$ in norm.
The function $q$ is continuous on the compact set $\closedcompdomwoalb \supseteq \mathcal{S}$, therefore its supremum over $\mathcal{S}$ is bounded.

We now turn to the case where $x$ is in $\mathcal{S}^C$.
In this case, the only way $z(w)/\norm{w}$ \eqref{eqn:z_over_w} can become unbounded is if $(\tdpidx(x) + \Delta) g(x)$ can become arbitrarily small, but this cannot happen. %
By the definition of $\mathcal{S}$, the infimum of $\abs{\tpderiv{\pi}{}{x}(x) g(x)}$ over $\mathcal{S}^C$ is strictly positive, and the infimum of $\abs{(\tpderiv{\pi}{}{x}(x) + \Delta) g(x)}$ over $\mathcal{S}^C$ and all $\Delta$ bounded above in norm by $M_{\Delta}$ is also strictly positive.

\end{proof}
\end{lemma}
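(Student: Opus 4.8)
The plan is to work directly from the closed form of $z(w)/\norm{w}$ and to isolate the single mechanism that could make it unbounded. When $w \cdot g(x) \ne 0$, the formula \eqref{eqn:dpi_dx_ptchy_defns} for $z$ gives
\begin{equation*}
\frac{z(w)}{\norm{w}}
=
\frac{w \cdot f(x) + \sqrt{(w \cdot f(x))^2 + 2\frac{q(x)}{r(x)} (w \cdot g(x))^2 }}{(w \cdot g(x))^2} r(x),
\end{equation*}
with $w = \dpidx(x) + \Delta$. Every ingredient here ($f$, $g$, $q$, $r$) is continuous on the compact set $\closedcompdomwoalb$, so the only route to a blow-up is the vanishing of $w \cdot g(x)$ in the denominator. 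I would therefore split $\closedcompdomwoalb$ into the set $\mathcal{S}$, where $\abs{\dpidx(x) g(x)}$ lies below the threshold in its definition, and the complement $\mathcal{S}^C$, and bound the supremum separately on each.

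On $\mathcal{S}^C$ the estimate is essentially free. By construction $\abs{\dpidx(x) g(x)}$ is bounded below by the threshold, and the third condition on $M_{\Delta}$ keeps $\abs{w \cdot g(x)} = \abs{(\dpidx(x) + \Delta) g(x)}$ uniformly bounded away from zero for all $\norm{\Delta} \le M_{\Delta}$. Hence the denominator never degenerates, and since the numerator and $r$ are continuous on a compact set their supremum over $\mathcal{S}^C \times \{\norm{\Delta} \le M_{\Delta}\}$ is finite.

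The work is concentrated on $\mathcal{S}$, where $w \cdot g(x)$ genuinely may be small and the cancellation has to be produced by hand. The idea is to use the strict Lyapunov assumption to force $w \cdot f(x)$ strictly negative. Inserting the optimal feedback $\kappa = -\tfrac{1}{r}\dpidx g$ into $\lyupderiv{} < 0$ yields $\dpidx f = \lyupderiv{} + \tfrac{1}{r}(\dpidx g)^2$; the defining threshold of $\mathcal{S}$ bounds $\tfrac{1}{r}(\dpidx g)^2$ by $\tfrac12 \inf_{\closedcompdomwoalb}\abs{\lyupderiv{}}$, giving $\dpidx f \le -\tfrac12 \inf_{\closedcompdomwoalb}\abs{\lyupderiv{}} < 0$. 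The first two conditions on $M_{\Delta}$ then push $w \cdot f(x)$ below a strictly negative constant uniformly. With $w \cdot f(x) < 0$ in hand I would expand the square root by the mean value form of $\sqrt{1+u}$ about the term $2\tfrac{q}{r}(w\cdot g)^2$, obtaining a factor $-(w\cdot f)$ that cancels the leading $w \cdot f$ in the numerator and a factor $(w\cdot g)^2$ that cancels the offending denominator; the ratio then collapses to $q(x)/\bigpars{\abs{w\cdot f(x)}\sqrt{1+\xi}} \le q(x)/\abs{w\cdot f(x)}$ for some $\xi \ge 0$, which is finite because $q$ is continuous on a compact set and $w \cdot f(x)$ is bounded away from zero. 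The same bound handles the degenerate subcase $w \cdot g(x) = 0$, in which $z$ solves the linear equation \eqref{eqn:drvtn_sclr_hjb_eqn}.

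The main obstacle is exactly this cancellation on $\mathcal{S}$: one must simultaneously exploit the strict Lyapunov property (to keep $w \cdot f$ negative) and the square-root expansion (to remove the vanishing $(w\cdot g)^2$) so that no uncontrolled quotient survives. Once both cases are settled, the desired supremum is the maximum of two finite numbers, which proves the lemma.
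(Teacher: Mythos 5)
Your proposal is correct and follows essentially the same route as the paper's proof: the same split of $\closedcompdomwoalb$ into $\mathcal{S}$ and $\mathcal{S}^C$, the same use of the third condition on $M_{\Delta}$ to keep $w \cdot g$ away from zero on $\mathcal{S}^C$, and the same square-root expansion on $\mathcal{S}$ collapsing the ratio to $q/\abs{w \cdot f}$. The only difference is that you explicitly derive $\dpidx(x) f(x) \le -\tfrac{1}{2}\inf_{\closedcompdomwoalb}\abs{\lyupderiv{}}$ from the feedback identity, a step the paper asserts without computation.
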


Our final task in establishing Theorem \ref{thm:phi1_lte_bound} is to show that there exists an $M_{\Delta}$ such that the gradient of $z(w)$ exists and is uniformly bounded over $\tdpidx(\closedcompdomwoalb) + \Delta$ for $\norm{\Delta} \le M_{\Delta}$.
This is the content of Lemmas \ref{lem:dzdw_exists} and \ref{lem:dzdw_bounded} .
\begin{lemma} \label{lem:dzdw_exists}
Under the assumptions on the optimal control problem of \S \ref{sec:drvtn:opt_cntrl_ass},
the function $z$ that solves \eqref{eqn:drvtn_sclr_hjb_eqn} is continuously differentiable in a nonempty neighborhood of every point in $\tdpidx(\closedcompdomwoalb)$.
\begin{proof}
By the implicit function theorem \cite[ch.~9]{rudin:real_anly}, a solution to $a x^2 + b x + c = 0$ is a continuously differentiable function of its coefficients in a neighborhood of $a_0$, $b_0$, and $c_0$ if $x_0$ solves the quadratic equation with coefficients $a_0$, $b_0$, and $c_0$, and $2 a_0 x_0 + b_0 \ne 0$.
To conclude that $z(w)$ is continuously differentiable in a neighborhood of each point in $\tdpidx(\closedcompdomwoalb)$, we must verify that $-\tfrac{1}{r(x)}g_n(\tpderiv{\pi}{}{x}(x))^2 z(\tpderiv{\pi}{}{x}(x)) + f_n(\tpderiv{\pi}{}{x}(x))$ is nonzero for every $x$ in $\tdpidx(\closedcompdomwoalb)$.

As a function of $x$, $\tdpidx \dot{x}$ is continuous under the assumptions that $f$, $g$, $r$, and $\tdpidx$ are continuous on $\closedcompdomwoalb$.
Furthermore, $\tdpidx \dot{x}$ is nonzero on $\closedcompdomwoalb$ by the assumption that $\pi$ is a strict Lyapunov function, so there is a nonzero uniform lower bound on both $\abs{\tdpidx(x) \dot{x}}$ and $\norm{\tdpidx}$ on $\closedcompdomwoalb$.
It then follows from the identity
\begin{equation*}
-\frac{1}{r(x)}g_n \Bigpars{ \pderiv{\pi}{}{x}(x)}^2 z\Bigpars{\pderiv{\pi}{}{x}(x)}
+ f_n\Bigpars{\pderiv{\pi}{}{x}(x)}
=
\frac{\pderiv{\pi}{}{x}(x) \dot{x}}{\Bignorm{\pderiv{\pi}{}{x}(x)}}
,
\end{equation*}
that $-\tfrac{1}{r(x)}g_n(\tpderiv{\pi}{}{x}(x))^2 z(\tpderiv{\pi}{}{x}(x)) + f_n(\tpderiv{\pi}{}{x}(x))$ is nonzero for every $x$ in $\tdpidx(\closedcompdomwoalb)$.
\end{proof}
\end{lemma}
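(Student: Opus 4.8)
The plan is to apply the implicit function theorem to the quadratic equation \eqref{eqn:drvtn_sclr_hjb_eqn} and then to use the strict Lyapunov property to verify the single non-degeneracy hypothesis that this theorem requires. First I would note that, for the point $x$ under consideration, the root $z(w)$ defined in \eqref{eqn:dpi_dx_ptchy_defns} solves $a z^2 + b z + c = 0$ with coefficients $a = -\tfrac{g_n(w)^2}{2 r(x)}$, $b = f_n(w)$, and $c = q(x)$. Under the assumptions of \S\ref{sec:drvtn:opt_cntrl_ass} the data $f,g,q,r$ are continuously differentiable, and since $\dpidx$ is bounded away from zero on the compact set $\closedcompdomwoalb$ (a consequence of assumption \ref{ass:strict_lyup_pi}), the maps $w \mapsto f_n(w)$ and $w \mapsto g_n(w)$ are $C^1$ in a neighborhood of each $w = \dpidx(x)$. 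Hence the coefficient map $w \mapsto (a,b,c)$ is $C^1$ near every such point.

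Next I would invoke the implicit function theorem \cite[ch.~9]{rudin:real_anly} in the form that a root of $a z^2 + b z + c = 0$ is a continuously differentiable function of $(a,b,c)$ near $(a_0,b_0,c_0)$ provided $2 a_0 z_0 + b_0 \neq 0$ at the root $z_0$. Composing with the $C^1$ coefficient map reduces the lemma to showing that the quantity $2 a z + b = -\tfrac{1}{r(x)} g_n\bigl(\dpidx(x)\bigr)^2 z\bigl(\dpidx(x)\bigr) + f_n\bigl(\dpidx(x)\bigr)$ is nonzero at every point of $\dpidx(\closedcompdomwoalb)$; non-vanishing at a point then automatically produces the desired nonempty open neighborhood on which $z$ is $C^1$.

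The crux — and the step where the strict Lyapunov assumption enters — is the algebraic identity
\[
-\tfrac{1}{r(x)}\, g_n\Bigpars{\dpidx(x)}^2\, z\Bigpars{\dpidx(x)} + f_n\Bigpars{\dpidx(x)} = \frac{\dpidx(x)\,\dot{x}}{\Bignorm{\dpidx(x)}} .
\]
I would derive this by first observing that when $w = \dpidx(x)$ is the \emph{true} gradient the positive root recovers $z\bigl(\dpidx(x)\bigr) = \norm{\dpidx(x)}$, so that $\mathbf{n} = \dpidx(x)/\norm{\dpidx(x)}$, and then substituting the definitions of $f_n,g_n$ together with the exact control $\kappa(x) = -\tfrac{1}{r(x)}\dpidx(x) g(x)$ coming from \eqref{eqn:HJB_PDE2}; the right-hand side collects into $\dpidx(x)\bigl(f(x)+g(x)\kappa(x)\bigr)/\norm{\dpidx(x)}$, which is exactly $\lyupderiv{}$ divided by $\norm{\dpidx(x)}$. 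This identity holds on both branches of \eqref{eqn:dpi_dx_ptchy_defns}, including the linear case $g_n = 0$.

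Once the identity is established, assumption \ref{ass:strict_lyup_pi} gives $\dpidx(x)\dot{x} < 0$ for every $x$ in $\compdomwoalb$, so the right-hand side is strictly negative and in particular nonzero at each point of $\dpidx(\closedcompdomwoalb)$; the non-degeneracy hypothesis of the implicit function theorem therefore holds everywhere on that set, completing the argument. I expect the algebraic identity to be the main obstacle, not because the computation is hard but because the insight it encodes — that the implicit-function non-degeneracy condition coincides with the Lyapunov derivative, and is thus guaranteed precisely by the strict Lyapunov hypothesis — is what makes the whole reduction work; a routine continuity-plus-compactness argument on $\closedcompdomwoalb$ then upgrades the pointwise non-vanishing to a uniform bound if one later wants it, but for this lemma pointwise non-vanishing already suffices.
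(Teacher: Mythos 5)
Your proposal is correct and follows essentially the same route as the paper: reduce to the implicit-function-theorem non-degeneracy condition $2a_0z_0+b_0\neq 0$ for the quadratic \eqref{eqn:drvtn_sclr_hjb_eqn}, identify that quantity with $\dpidx(x)\dot{x}/\norm{\dpidx(x)}$, and conclude non-vanishing from the strict Lyapunov assumption. The only difference is that you spell out the derivation of the identity (via $z(\dpidx(x))=\norm{\dpidx(x)}$ and the feedback form of $\kappa$), which the paper simply asserts.
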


We now verify that there exists an $M_{\Delta} > 0$ such that  $\norm{\tpderiv{z}{}{w}(\tdpidx(\closedcompdomwoalb) + \Delta)}$ is uniformly bounded for all $\norm{\Delta} \le M_{\Delta}$.

\begin{lemma}
\label{lem:dzdw_bounded}
If $M_{\Delta}$ satisfies the conditions of Lemma \ref{lem:z_over_dpidx_bounded} in \eqref{eqn:ass_small_dpidxg} in addition to
\begin{multline*}
M_{\Delta}
<
\frac{1}{2}
\sup_{\closedcompdomwoalb}
\Bigpars{
\Bignorm{\dpidx(x)}\bigpars{\norm{f(x)}^2 + 2 \frac{q(x)}{r(x)}\norm{g(x)}^2} + \norm{f(x)}^2 + \norm{g(x)}^2
}
\\
\times
\inf_{\closedcompdomwoalb} \Bigpars{\dpidx(x)\dot{x}}^2
\end{multline*}
then
\begin{equation*}
\sup_{\substack{\overline{X_c \setminus \mathcal{A}}\\ \norm{\Delta} \le M_{\Delta}}}
\Biggnorm{
\pderiv{z}{}{w}\Biggpars{\pderiv{\pi}{}{x}(x) + \Delta}
}
< \infty.
\end{equation*}

\begin{proof}
Let $w= \tdpidx(x) + \Delta$.
By Lemma \ref{lem:dzdw_exists}, $\tpderiv{z}{}{w}$ exists in a neighborhood of $\tdpidx(x)$ for all $x$ in $\closedcompdomwoalb$, and an implicit formula for it is
\begin{multline}
\label{eqn:dzdw_formula}
\pderiv{z}{}{w}(w) = 
\frac{1}{f_n(w) - \frac{1}{r(x)}g_n(w)^2 z(w)} 
\\
\times \Biggbracks{
\Bigpars{\frac{z(w)}{\norm{w}_2}}^2
g_n(w) \bigpars{ \norm{w}_2 g(x) - g_n(w) w }
\\
+
\frac{z(w)}{\norm{w}_2}
\Bigpars{
g_n(w) \frac{w}{\norm{w}_2} - f(x) 
}
}
\end{multline}
Both $f$ and $g$ are uniformly bounded on $\closedcompdomwoalb$.
Since $\Delta \le M_{\Delta} < \inf_{\closedcompdomwoalb} \norm{\tdpidx(x)}$, $w$ is nonzero, and the terms $f_n(w)$ and $g_n(w)$ are both uniformly bounded on $\closedcompdomwoalb$ for all $\Delta \le M_{\Delta}$ by the supremums of $f$ and $g$ on $\closedcompdomwoalb$.
By Lemma \ref{lem:z_over_dpidx_bounded}, the term $z(w)/\norm{w}$ is bounded.

The only remaining term that can potentially become unbounded is $(f_n(w) - \tfrac{1}{r(x)}g_n(w)^2 z(w))^{-1}$.
This quantity can be rewritten as
\begin{align} 
\Bigpars{f_n(w) -  \frac{g_n(w)^2}{r(x)} z(w)}^2 
&= f_n(w)^2 + 2 \frac{q(x)}{r(x)}g_n(w)^2 
\label{eqn:discr_rel1}\\
&= \frac{1}{\norm{\dpidx(x) + \Delta}^2} \Bigpars{ \Bigpars{\dpidx(x)\dot{x}}^2 + E(x, \Delta) }
\label{eqn:discr_rel2}
\end{align}
Equation \eqref{eqn:discr_rel1} follows from the fact that both sides of the equation are formulas for the discriminant of the quadratic equation \eqref{eqn:drvtn_sclr_hjb_eqn}.
If we solve the HJB equation \eqref{eqn:HJB_PDE1} for $q(x)$ in terms of $\tpderiv{\pi}{}{x}(x)$ and the other problem data, and then substitute this formula into \eqref{eqn:discr_rel1}, then we arrive at \eqref{eqn:discr_rel2}.
We defined $M_{\Delta}$ in the lemma statement so that $E(x, \Delta) = \mathcal{O}(\norm{\Delta})$ and the infimum of \eqref{eqn:discr_rel2} is strictly greater than zero over $\closedcompdomwoalb$ and all $\norm{\Delta} \le M_{\Delta}$.
\end{proof}
\end{lemma}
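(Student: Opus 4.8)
The plan is to start from the closed-form expression for \dzdw\ that implicit differentiation of the scalar quadratic \eqref{eqn:drvtn_sclr_hjb_eqn} supplies, and then argue that every factor appearing in it is uniformly bounded over $\closedcompdomwoalb$ and all $\norm{\Delta} \le M_{\Delta}$. First I would set $w = \dpidx(x) + \Delta$ and invoke Lemma~\ref{lem:dzdw_exists}, which already guarantees that $z$ is continuously differentiable as a function of the quadratic's coefficients and hence that \dzdw\ exists at each such $w$. Differentiating \eqref{eqn:drvtn_sclr_hjb_eqn} implicitly produces a formula for \dzdw\ whose only delicate ingredient is the reciprocal of the quantity $f_n(w) - \tfrac{1}{r(x)} g_n(w)^2 z(w)$, namely the derivative of the quadratic with respect to $z$ at the root, which is exactly the nonvanishing expression identified in Lemma~\ref{lem:dzdw_exists}.

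Next I would dispatch the harmless factors. Since $f$ and $g$ are continuous on the compact set $\closedcompdomwoalb$ they are bounded there, and because $\norm{\Delta} \le M_{\Delta} < \inf_{\compdomwoalb}\norm{\dpidx(x)}$ by the first condition in \eqref{eqn:ass_small_dpidxg}, the vector $w$ stays bounded away from zero; consequently the projections $f_n(w)$ and $g_n(w)$ are bounded, as is $z(w)/\norm{w}$ by Lemma~\ref{lem:z_over_dpidx_bounded}. Thus the whole estimate reduces to producing a \emph{uniform strictly positive lower bound} on $\bigabs{f_n(w) - \tfrac{1}{r(x)}g_n(w)^2 z(w)}$.

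That lower bound is the crux of the argument and the step I expect to be the main obstacle. The idea is to square the quantity and recognise that $\bigpars{f_n(w) - \tfrac{1}{r(x)}g_n(w)^2 z(w)}^2$ is precisely the discriminant of the quadratic \eqref{eqn:drvtn_sclr_hjb_eqn}, and therefore equals $f_n(w)^2 + 2\tfrac{q(x)}{r(x)}g_n(w)^2$. I would then eliminate $q(x)$ by solving the first HJB equation \eqref{eqn:HJB_PDE1} for it in terms of $\dpidx(x)$ and the remaining problem data, substitute, and rewrite the discriminant as $\tfrac{1}{\norm{w}^2}\bigpars{(\dpidx(x)\dot{x})^2 + E(x,\Delta)}$, where the leading term is bounded below away from zero exactly because $\pi$ is a strict Lyapunov function, and the remainder satisfies $E(x,\Delta) = \mathcal{O}(\norm{\Delta})$. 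The fourth hypothesis on $M_{\Delta}$ is engineered so that this $\mathcal{O}(\norm{\Delta})$ perturbation cannot overwhelm $\inf_{\closedcompdomwoalb}(\dpidx(x)\dot{x})^2$, yielding a uniform positive lower bound on the discriminant and hence on the denominator.

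Finally I would assemble these estimates: bounded numerator factors divided by a denominator bounded away from zero give a finite uniform bound on \dzdw, completing the proof. The only genuinely nontrivial point is the discriminant manipulation together with the bookkeeping that keeps $E(x,\Delta)$ controlled by $\norm{\Delta}$; everything else is continuity on a compact set.
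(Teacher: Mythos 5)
Your proposal is correct and follows essentially the same route as the paper: isolate the reciprocal of $f_n(w) - \tfrac{1}{r(x)}g_n(w)^2 z(w)$ as the only dangerous factor, identify its square with the discriminant of \eqref{eqn:drvtn_sclr_hjb_eqn}, eliminate $q(x)$ via \eqref{eqn:HJB_PDE1} to expose $(\tdpidx(x)\dot{x})^2$ plus an $\mathcal{O}(\norm{\Delta})$ perturbation, and use the strict Lyapunov assumption together with the fourth condition on $M_{\Delta}$ to bound the denominator away from zero. The remaining factors are handled by compactness exactly as in the paper's proof.
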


\section{$\phi_1$ satisfies the Lipschitz condition} \label{sec:phi-1_lip}
\begin{theorem}
\label{thm:phi_1_is_lipschitz}
Under the assumptions on the optimal control problem of \S \ref{sec:drvtn:opt_cntrl_ass}, there exists a maximum step size $h_L>0$, maximum multiplier $\Mmax>0$, and Lipschitz constant $L_1$ such that for all $x^i$ and $x^{i+1}$ in $\closedcompdomwoalb$ within a distance of $h_L$ of each other, and for all $M \le \Mmax$,
\begin{gather*}
\norm{\phi_1(C^i) - \phi_1(\hat{C}^i)} \le L_1 M \norm{x^{i+1} - x^i}^{d+1}
\intertext{whenever}
\norm{C_j^i -\hat{C}_j^i} \le M \norm{x^{i+1} - x^i}^{d+2-j}
\text{ for }
0 \le j \le d+1
\end{gather*}
where $C^i$ is the vector of exact partial derivatives up to order $d+1$ of the optimal cost at $x^i$.
The theorem holds for any $\Mmax$, $h_L$ pair that satisfy maximum consecutive patch point distance multiplier condition of Definition \ref{defn:max_step_mult}.
\end{theorem}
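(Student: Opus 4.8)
The plan is to factor $\phi_1$ through the scalar map $z$ analyzed in Section~\ref{sec:lte_1}, and then transfer the bounds already established for $z$ to $\phi_1$. Writing $w(C) \equiv \dPdx(C, x^{i+1}-x^i)$ for the gradient at $x^{i+1}$ of the degree $d+1$ polynomial with coefficient vector $C$ centered at $x^i$, the construction in Section~\ref{sec:drvtn} shows that $\phi_1(C) = \Phi\bigpars{w(C)}$, where
\[
\Phi(w) \equiv z(w)\,\frac{w}{\norm{w}}.
\]
Hence $\norm{\phi_1(C^i) - \phi_1(\hat C^i)} = \Bignorm{\Phi\bigpars{w(C^i)} - \Phi\bigpars{w(\hat C^i)}}$, and I would estimate the right-hand side as a product of (i) a uniform bound on the operator norm of the derivative of $\Phi$ along the segment joining $w(C^i)$ and $w(\hat C^i)$, times (ii) the distance $\norm{w(C^i)-w(\hat C^i)}$. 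Quantity (ii) is controlled directly by Lemma~\ref{lem:poly_lip} with $k=1$: the hypothesis $\norm{C_j^i-\hat C_j^i}\le M\norm{x^{i+1}-x^i}^{d+2-j}$ yields $\norm{w(C^i)-w(\hat C^i)}\le \Linh M \norm{x^{i+1}-x^i}^{d+1}$.

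For quantity (i), I would differentiate $\Phi$ explicitly. Setting $u = w/\norm{w}$, the product rule gives
\[
D\Phi(w) = u\,\dzdw(w) + \frac{z(w)}{\norm{w}}\bigpars{I - u u^T},
\]
so that $\norm{D\Phi(w)} \le \norm{\dzdw(w)} + \abs{z(w)}/\norm{w}$, because $\norm{u}=1$ and $I-uu^T$ is an orthogonal projection. The first term is bounded uniformly over $\dpidx(\closedcompdomwoalb)+\Delta$ with $\norm{\Delta}\le M_{\Delta}$ by Lemma~\ref{lem:dzdw_bounded}, and the second by Lemma~\ref{lem:z_over_dpidx_bounded}; Lemma~\ref{lem:dzdw_exists} guarantees that $\Phi$ is $C^1$ on that neighborhood. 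This produces a finite constant $B$ with $\norm{D\Phi(w)}\le B$ there, whence the mean-value inequality gives $\Bignorm{\Phi\bigpars{w(C^i)}-\Phi\bigpars{w(\hat C^i)}} \le B\,\norm{w(C^i)-w(\hat C^i)}$.

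The step that requires care, and which I expect to be the main obstacle, is verifying that $w(C^i)$, $w(\hat C^i)$, and the entire segment between them lie inside the neighborhood $\{\dpidx(x)+\Delta : x\in\closedcompdomwoalb,\ \norm{\Delta}\le M_{\Delta}\}$ on which these three lemmas apply. Because $C^i$ consists of the \emph{exact} partial derivatives of $\pi$ at $x^i$, the polynomial is the Taylor polynomial of $\pi$, so $w(C^i)=\dpibardx(x^{i+1})$ is the gradient of that Taylor polynomial rather than the true gradient $\dpidx(x^{i+1})$. Taylor's theorem with remainder gives $\norm{w(C^i)-\dpidx(x^{i+1})} = \mathcal{O}\bigpars{\norm{x^{i+1}-x^i}^{d+1}}$, while Lemma~\ref{lem:poly_lip} gives $\norm{w(\hat C^i)-w(C^i)} \le \Linh M\norm{x^{i+1}-x^i}^{d+1}$. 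By the triangle inequality, every point of the segment sits within $\mathcal{O}(h^{d+1}) + \Linh M h^{d+1}$ of $\dpidx(x^{i+1})$, so choosing the maximum step size $h_L$ and maximum multiplier $\Mmax$ so that this quantity never exceeds $M_{\Delta}$ — precisely the maximum consecutive patch point distance multiplier condition of Definition~\ref{defn:max_step_mult} — keeps the whole segment in the good neighborhood.

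Combining (i) and (ii) then yields $\norm{\phi_1(C^i)-\phi_1(\hat C^i)} \le B\,\Linh\,M\,\norm{x^{i+1}-x^i}^{d+1}$, so the theorem follows with $L_1 = B\,\Linh$. The only genuinely delicate bookkeeping is in the third paragraph, namely making the $h$- and $M$-dependence of the neighborhood condition explicit enough to match Definition~\ref{defn:max_step_mult}; the differentiation and the mean-value estimate are routine once the lemmas of Section~\ref{sec:lte_1} are in hand.
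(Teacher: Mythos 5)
Your proposal is correct and rests on exactly the same ingredients as the paper's proof: the factorization $\phi_1(C)=z(w(C))\,w(C)/\norm{w(C)}$ with $w(C)=\tdPdx(C,x^{i+1}-x^i)$, Lemma \ref{lem:poly_lip} to convert the coefficient perturbation into a gradient perturbation of size $\Linh M\norm{x^{i+1}-x^i}^{d+1}$, Lemmas \ref{lem:dzdw_exists}, \ref{lem:dzdw_bounded}, and \ref{lem:z_over_dpidx_bounded} for the uniform control of $z$ and $\tdzdw$ on the $M_{\Delta}$-neighborhood of $\tdpidx(\closedcompdomwoalb)$, and the Taylor remainder plus Definition \ref{defn:max_step_mult} to verify that the entire segment joining $w(C^i)$ and $w(\hat C^i)$ stays inside that neighborhood. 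The only difference is in packaging: the paper bounds the two factors separately, proving one inequality for $\bignorm{w(C^i)/\norm{w(C^i)}-w(\hat C^i)/\norm{w(\hat C^i)}}$ (via the elementary estimate $2\norm{w-\hat w}/(\norm{w}-\norm{w-\hat w})$, whose denominator is kept positive by the same step-size condition) and a second for $\abs{z(w(C^i))-z(w(\hat C^i))}$ via the mean value theorem applied to $z$ alone, whereas you apply a single mean-value inequality to the product map $\Phi(w)=z(w)\,w/\norm{w}$, absorbing the direction-normalization estimate into the projection term $\tfrac{z(w)}{\norm{w}}(I-uu^T)$ of $D\Phi$. Your version is marginally more compact and makes the role of $\abs{z(w)}/\norm{w}$ (Lemma \ref{lem:z_over_dpidx_bounded}) more transparent, at the modest cost of needing $\Phi$ itself, rather than just $z$, to be $C^1$ on the whole segment; since $w\ne 0$ there (by $M_{\Delta}<\inf\norm{\tdpidx}$) this is harmless, and the resulting constant differs from the paper's only by the $\sqrt{n}$ factor that Lemma \ref{lem:poly_lip} contributes when its componentwise bound is promoted to a vector norm.
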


\begin{definition}
\label{defn:max_step_mult}
The maximum consecutive patch point distance $h_L$ and the maximum multiplier $\Mmax$ are said to satisfy the maximum consecutive patch point distance multiplier condition if
\begin{align*}
h_L > 0
& &
\Mmax > 0
& &
h_L \le h_T
& &
(\sqrt{n} \Linh \Mmax + T) h_L^{d+1} \le M_{\Delta}
\end{align*}
where $\Linh$ is defined in Lemma \ref{lem:poly_lip}, $h_T$ and $T$ are the maximum consecutive step size and local truncation constant from Corollary \ref{cor:phi1_lte_bound} and $M_{\Delta}$ satisfies the inequalities of Theorem \ref{thm:phi1_lte_bound}.
\end{definition}

\begin{proof}[Proof of Theorem \ref{thm:phi_1_is_lipschitz}]
If $w=\tpderiv{P}{}{x}(C, x^{i+1}-x^i)$ is nonzero and $z$ is defined at $w$, then $\phi_1$ can be expressed as
\begin{equation*}
\phi_1(C)
=
z\Bigpars{\pderiv{P}{}{x}(C,x^{i+1}-x^i)}
\frac{\pderiv{P}{}{x}(C,x^{i+1}-x^i)}{\norm{\pderiv{P}{}{x}(C,x^{i+1}-x^i)}}
\end{equation*}
where $P$ is the polynomial whose coefficients are held in $C$, and is defined in Lemma \ref{lem:poly_lip}.
We will show that there exists $L'$ such that, if the conditions of the theorem are met, then
\begin{gather}
\label{eqn:phi1_lip_proof_ineq1}
\Biggnorm{
\frac{\pderiv{P}{}{x}(C^i,x^{i+1}-x^i)}{\norm{\pderiv{P}{}{x}(C^i,x^{i+1}-x^i)}}
-
\frac{\pderiv{P}{}{x}(\hat{C}^i,x^{i+1}-x^i)}{\norm{\pderiv{P}{}{x}(\hat{C}^i,x^{i+1}-x^i)}}
}
\le
L_1' M \norm{x^{i+1}-x^i}^{d+1}
\intertext{and}
\label{eqn:phi1_lip_proof_ineq2}
\Bigabs{
z \Bigpars{\tpderiv{P}{}{x}(C^i, x^{i+1}-x^i)}
- 
z \Bigpars{\tpderiv{P}{}{x}(\hat{C}^i, x^{i+1}-x^i)}}
\le
L_1' M \norm{x^{i+1}-x^i}^{d+1}
\end{gather}
which implies the conclusion of the theorem.

We start with the inequality in \eqref{eqn:phi1_lip_proof_ineq1}.
$P$ and all its partial derivatives are linear in its first argument.
If
\begin{equation*}
\Bignorm{
\pderiv{P}{}{x}(C^i, x^{i+1}-x^i) - \pderiv{P}{}{x}(\hat{C}^i, x^{i+1}-x^i)
}
<
\Bignorm{
\pderiv{P}{}{x}(C^i, x^{i+1}-x^i)
}
\end{equation*}
then
\begin{multline*}
\Biggnorm{
\frac{\pderiv{P}{}{x}(C^i,x^{i+1}-x^i)}{\norm{\pderiv{P}{}{x}(C^i,x^{i+1}-x^i)}}
-
\frac{\pderiv{P}{}{x}(\hat{C}^i,x^{i+1}-x^i)}{\norm{\pderiv{P}{}{x}(\hat{C}^i,x^{i+1}-x^i)}}
}
\\
\begin{aligned}
&\le
2 \frac{\Bignorm{\pderiv{P}{}{x}(C^i - \hat{C}^i, x^{i+1}-x^i)}}
{\norm{\pderiv{P}{}{x}(C^i, x^{i+1}-x^i)} - 
\norm{\pderiv{P}{}{x}(C^i-\hat{C}^i, x^{i+1}-x^i) }}
\\
&\le
2 \frac{\sqrt{n} \Linh M \norm{x^{i+1}-x^i}^{d+1}}
{\norm{\pderiv{P}{}{x}(C^i, x^{i+1}-x^i)} - 
\norm{\pderiv{P}{}{x}(C^i-\hat{C}^i, x^{i+1}-x^i)}}
\end{aligned}
\end{multline*}
where the final inequality follows from Lemma \ref{lem:poly_lip}.
The inequality in \eqref{eqn:phi1_lip_proof_ineq1} will follow after we establish that for some $\epsilon$,
\begin{equation}
\label{eqn:phi1_lip_proof_eps_bound}
0 < \epsilon 
\le
\Biggabs{
\Bignorm{\pderiv{P}{}{x}(C^i, x^{i+1}-x^i)} - \Bignorm{\pderiv{P}{}{x}(C^i-\hat{C}^i, x^{i+1}-x^i)}
}
\end{equation}
\eqref{eqn:phi1_lip_proof_eps_bound} also guarantees that $\tpderiv{P}{}{x}(\hat{C}^i, x^{i+1}-x^i)$ is nonzero.
$C^i$ holds the exact partial derivatives of the optimal cost at $x^i$, so $P(C^i, x-x^i) = \bar{\pi}(x)$ where $\bar{\pi}$ is the Taylor polynomial centered at $x^i$.
By Corollary \ref{cor:phi1_lte_bound},
\begin{align*}
\Biggabs{
\Bignorm{\pderiv{\pi}{}{x}(x^{i+1})} - \Bignorm{\pderiv{P}{}{x}(C^i,x^{i+1}-x^i)}
}
&\le
\Bignorm{\pderiv{\pi}{}{x}(x^{i+1}) - \pderiv{P}{}{x}(C^i,x^{i+1}-x^i)} \\
&\le
T \norm{ x^{i+1} - x^i }^{d+1}
.
\end{align*}
By Lemma \ref{lem:poly_lip}
\begin{equation*}
\Bignorm{\pderiv{P}{}{x}(C^i,x^{i+1}-x^i) - \pderiv{P}{}{x}(\hat{C}^i,x^{i+1}-x^i)} 
\le
\sqrt{n} \Linh \Mmax \norm{x^{i+1}-x^i}
\end{equation*}
Inequality \ref{eqn:phi1_lip_proof_eps_bound} follows from the final strict inequality in
\begin{multline*}
\Biggabs{
\Bignorm{\pderiv{P}{}{x}(C^i, x^{i+1}-x^i)} - \Bignorm{\pderiv{P}{}{x}(C^i-\hat{C}^i, x^{i+1}-x^i)}
}
\\
\begin{aligned}
&=
\Biggabs{
\Bignorm{\pderiv{\pi}{}{x}(x^{i+1})}
+
\Bignorm{\pderiv{P}{}{x}(C^i, x^{i+1}-x^i)}
\\
&\phantom{= \vert}
- \Bignorm{\pderiv{\pi}{}{x}(x^{i+1})}
- \Bignorm{\pderiv{P}{}{x}(C^i-\hat{C}^i, x^{i+1}-x^i)}
}
\\
&\ge
\inf_{\closedcompdomwoalb} \Bignorm{\pderiv{\pi}{}{x}(x)}
-
(\sqrt{n} \Linh \Mmax + T) h_L^{d+1}
\\
&>
0
\end{aligned}
\end{multline*}
The maximum patch point multiplier condition (Definition \ref{defn:max_step_mult}) was defined so the last inequality is strict.

We now prove that the inequality \eqref{eqn:phi1_lip_proof_ineq2} holds by applying the mean value theorem to $z(w)$.
If
\begin{gather}
\label{eqn:phi1_lip_proof_ineq3}
\Bignorm{
\pderiv{P}{}{x}(C^i, x^{i+1}-x^i)
-
\pderiv{\pi}{}{x}(x^{i+1})
}
\le
M_{\Delta}
\intertext{and}
\label{eqn:phi1_lip_proof_ineq4}
\Bignorm{
\pderiv{P}{}{x}(\hat{C}^i, x^{i+1}-x^i)
-
\pderiv{\pi}{}{x}(x^{i+1})
}
\le
M_{\Delta}
\end{gather}
then for all $\tau$ in $0 \le \tau \le 1$,
\begin{equation*}
\Bignorm{
(1 - \tau) \pderiv{P}{}{x}(C^i, x^{i+1}-x^i) + \tau \pderiv{P}{}{x}(\hat{C}^i, x^{i+1}-x^i)
- 
\pderiv{\pi}{}{x}(x^{i+1})
}
\le
M_{\Delta}
\end{equation*}
meaning that every point on the line segment connecting $\tpderiv{P}{}{x}(C^i, x^{i+1}-x^i)$ and $\tpderiv{P}{}{x}(\hat{C}^i, x^{i+1}-x^i)$ is at most a distance of $M_{\Delta}$ from $\tpderiv{\pi}{}{x}(x^{i+1})$.
Therefore, $z$ is well defined at the endpoints, and by Lemma \ref{lem:dzdw_bounded} there is a uniform bound on the magnitude of $\tpderiv{z}{}{w}$ over all such lines.
Inequality \ref{eqn:phi1_lip_proof_ineq2} will follow once we show that the conditions of the theorem guarantee that inequalities \ref{eqn:phi1_lip_proof_ineq3} and \ref{eqn:phi1_lip_proof_ineq4} hold.
By assumption, $h_L \le h_T$, so by Corollary \ref{cor:phi1_lte_bound} and Definition \ref{defn:max_step_mult}
\begin{equation*}
\Bignorm{\pderiv{\pi}{}{x}(x^{i+1})
-
\pderiv{P}{}{x}(C^i, x^{i+1}-x^i)
}
\le
T h_L^{d+1}
\le
M_{\Delta}
\end{equation*}
and
\begin{multline*}
\Bignorm{
\pderiv{\pi}{}{x}(x^{i+1})
-
\pderiv{P}{}{x}(\hat{C}^i, x^{i+1}-x^i)
}
\\
\begin{aligned}
&\le
\Bignorm{
\pderiv{P}{}{x}(C^i-\hat{C}^i, x^{i+1}-x^i)
}
+
\Bignorm{
\pderiv{\pi}{}{x}(x^{i+1})
-
\pderiv{P}{}{x}(C^i, x^{i+1}-x^i)
}
\\
&\le
(\sqrt{n} \Linh \Mmax + T)h_L^{d+1}
\\
&\le
M_{\Delta}
.
\end{aligned}
\end{multline*}

\end{proof}

\section{Local truncation error for second and higher order partial derivatives }
\begin{theorem} \label{thm:opt_cost_local_trunc_err_higer_order}
Let $C^i$ and $C^{i+1}$ denote vectors of exact partial derivatives of the optimal cost at the consecutive patch points $x^i$ and $x^{i+1}$ in the computational domain $\compdom$.
If the optimal control problem satisfies the assumptions of \S \ref{sec:drvtn:opt_cntrl_ass} , then there exists local truncation constant $T_2 < \infty$ and maximum consecutive patch point distance $h > 0$ such that for all $2 \le j \le d+1$,
\begin{equation*}
\norm{C_j^{i+1} - \phi_j(C^i)} \le T \norm{x^{i+1} - x^i}_2^{d+2-j}
\text{ whenever } \norm{x^{i+1}-x^i} \le h
\end{equation*}
\end{theorem}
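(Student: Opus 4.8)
The plan is to induct on the derivative order $j$, using Corollary \ref{cor:phi1_lte_bound} (and the resulting $\mathcal{O}(h^{d+1})$ bound on the computed control value at $x^{i+1}$, which follows from \eqref{eqn:drvtn_crnt_optml_cost_grdnt_optml_cntrl}) as the base case, and propagating the bound upward through the recursion by which the patchy method generates the higher order partial derivatives. Throughout I work in the rotated coordinates $\hatxi$, where $x = x^{i+1} + \hat{V}\hatxi$ and $\hat{V}$ is the orthonormal basis built from the \emph{computed} optimal direction $\dot{x}^{i+1}$. The recovery formulas \eqref{eqn:opt_cost_pderiv_rcvry_frmls} express both $C_j^{i+1}$ and $\phi_j(C^i)$ as contractions of their respective $\hatxi$-derivatives against the same orthonormal tensors $\hat{V}^T e_{j_1} \otimes \cdots \otimes \hat{V}^T e_{j_k}$, so since $\hat{V}$ is orthonormal it suffices to bound the difference of the $\hatxi$-derivatives at $\hatxi = 0$. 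At each order the patchy method splits these derivatives into \emph{characteristic} ones (those carrying a $\hatxi_1$ index, obtained from the HJB recursion) and \emph{non-characteristic} ones (all indices $\ge 2$, obtained by inheritance), and I bound the two classes separately.

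For the non-characteristic derivatives the estimate is immediate. Since $C^i$ holds the exact partial derivatives at $x^i$, the inherited polynomial $P(C^i,\cdot)$ is precisely the degree $d+1$ Taylor polynomial $\bar{\pi}$ of $\pi$ centered at $x^i$, so inheritance returns the $\hatxi$-derivatives of $\bar{\pi}$ at $\hatxi=0$. Their difference from the corresponding $\hatxi$-derivatives of the exact cost $\hatpi(\hatxi)=\pi(x^{i+1}+\hat{V}\hatxi)$ is exactly the order-$j$ derivative of the Taylor remainder $\pi-\bar{\pi}$ at $x^{i+1}$. By Assumption \ref{ass:smooth_pi} the cost has continuous derivatives through order $d+2$, so this remainder is $\mathcal{O}(\norm{x^{i+1}-x^i}^{d+2-j})$, which already matches the target.

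For the characteristic derivatives I compare the exact HJB recursion with the computed one, both written in the single basis $\hat{V}$; this is the key device, because then the problem-data terms $\pderiv{\hatf}{}{\hatxi_j}(0)$, $\pderiv{\hatg}{}{\hatxi_j}(0)$, $\pderiv{\hatq}{}{\hatxi_j}(0)$, $\pderiv{\hatr}{}{\hatxi_j}(0)$ (and their higher order analogues in \S \ref{sec:deriv_formulas}) are identical for the two recursions and contribute no error. Differentiating \eqref{eqn:drvtn_trnsfrms_HJBa} a total of $j-1$ times isolates the order-$j$ characteristic derivative through the leading contraction of the order-$j$ derivative tensor against the optimal direction. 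For the computed recursion this direction is $\norm{\dot{x}^{i+1}}e_1$ by construction, whereas for the exact cost it is $\hat{V}^T\dot{x}=\norm{\dot{x}^{i+1}}e_1+\delta$ with $\delta=\hat{V}^T(\dot{x}-\dot{x}^{i+1})$. The first order error is $\mathcal{O}(h^{d+1})$ by Corollary \ref{cor:phi1_lte_bound}, hence so is the optimal-direction error, giving $\norm{\delta}=\mathcal{O}(h^{d+1})$; contracting $\delta$ against the bounded order-$j$ derivative tensor of the smooth $\pi$ contributes only $\mathcal{O}(h^{d+1})$, and the remaining terms involve derivatives of order at most $j-1$, which the induction hypothesis controls by $\mathcal{O}(h^{d+3-j})$. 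For the characteristic components this yields
\begin{equation*}
\norm{\dot{x}^{i+1}}\,\Bignorm{C_j^{i+1}-\phi_j(C^i)}_{\text{char}} \le \norm{\delta}\,\mathcal{O}(1) + \mathcal{O}\bigpars{h^{d+3-j}}.
\end{equation*}
Because the strict Lyapunov Assumption \ref{ass:strict_lyup_pi}, together with the $\mathcal{O}(h^{d+1})$ closeness of the computed and exact gradients, keeps $\norm{\dot{x}^{i+1}}$ bounded uniformly away from zero on $\closedcompdomwoalb$, dividing through leaves a characteristic error of order $\mathcal{O}(h^{d+3-j})+\mathcal{O}(h^{d+1})$. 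For $j\ge 2$ both exponents exceed $d+2-j$, so the characteristic part is dominated by the non-characteristic Taylor remainder, and the two estimates combine to $\norm{C_j^{i+1}-\phi_j(C^i)}\le T\norm{x^{i+1}-x^i}^{d+2-j}$ after shrinking $h$ if necessary.

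I expect the main obstacle to be the bookkeeping around the coordinate mismatch isolated in $\delta$: one must verify that expressing the exact cost in the \emph{computed} basis leaves the leading term of the differentiated recursion in the form $\norm{\dot{x}^{i+1}}\partial_{\hatxi_1}(\cdot)+\delta\cdot(\cdot)$, that the coefficient $\norm{\dot{x}^{i+1}}$ never degenerates, and that the interleaved control-derivative errors---needed because the characteristic order-$j$ cost formula involves control derivatives of order up to $j-2$, which are themselves produced from lower cost derivatives by \eqref{eqn:dkappa_dxi_cmptd} and its analogues---obey the companion bound $\mathcal{O}(h^{d+1-k})$ at order $k$, keeping them within budget wherever they appear.
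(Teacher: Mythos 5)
Your proof is correct, but it takes a genuinely different route from the paper's. The paper introduces \emph{two} coordinate systems, $x = x^{i+1}+V\xi$ adapted to the exact optimal direction and $x = x^{i+1}+\hat{V}\hatxi$ adapted to the computed one, so that each of the exact and computed characteristic recursions is clean in its own frame; the cost of this is substantial machinery: the SVD-based construction of biorthogonal ``convenient'' bases with $\norm{\hat{V}^TV-I}=\mathcal{O}(h^{d+1})$ (Lemmas \ref{lem:canonical_basis_exists} and \ref{lem:cnvnt_basis_id_pert_bnd}, Corollaries \ref{cor:canonical_mat_prod_iden_diff_ip_bnd} and \ref{cor:canonical_mat_prod_iden_diff_v_bnd}), the invariance of the recovered derivatives under the choice of trailing basis vectors (Lemma \ref{lem:char_prtls_chng_crds_rltn} through Lemma \ref{lem:approx_prtls_chng_crds_rltn}), the Kronecker perturbation bound of Lemma \ref{lem:kron_prd_id_diff}, and the bounds on how problem-data derivatives change between the two frames (Lemma \ref{lem:prtl_drv_chng_crds_diff}, Corollary \ref{cor:prtl_drv_chng_crds_diff2}). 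You instead stay in the single computed frame $\hat{V}$, which makes the problem-data terms in the two recursions literally identical and reduces the recovery step to a Cauchy--Schwarz contraction against a unit tensor; the price is that the exact solution's differentiated HJB identity in that frame isolates $\norm{\dot{x}^{i+1}}\partial_{\hatxi_1}(\cdot)$ only up to the perturbation $\ppderiv{\hatpi}{2}{\hatxi_j}{}{\hatxi}{}(0)\,\delta$ with $\delta=\hat{V}^T(\dot{x}-\dot{x}^{i+1})$, which you correctly control at $\mathcal{O}(h^{d+1})$ via Corollary \ref{cor:phi1_lte_bound} and the Lyapunov lower bound on $\norm{\dot{x}^{i+1}}$. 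Both arguments agree on the non-characteristic part (pure Taylor remainder, the binding $\mathcal{O}(h^{d+2-j})$ term) and on the induction through the interleaved control derivatives. Your version is the more economical proof of this particular theorem; the paper's two-frame formalism earns its keep mainly because it is reused essentially verbatim for the Lipschitz result (Theorem \ref{thm:phi_2_is_lipschitz}), where both directions are computed quantities and there is no privileged frame in which one recursion is exact.
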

We delay the proof of Theorem \ref{thm:opt_cost_local_trunc_err_higer_order} until after we have developed an overview of its main ideas.

Let $\dot{x}^{i+1}$ denote the computed optimal direction at the current patch point $x=x^{i+1}$.
There are two changes of variables associated with the computed and exact solutions at the current patch point.
They are
\begin{align}
\label{eqn:chngs_of_vars}
x = x^{i+1} + \hat{V} \hat{\xi} & & \hat{V}^1 = \frac{\dot{x}^{i+1}}{\norm{\dot{x}^{i+1}}}
&& 
x = x^{i+1} + V \xi & & V^1 = \frac{\dot{x}}{\norm{\dot{x}}}
.
\end{align}
We denote the computed and exact optimal cost and control under the changes of variables as
\begin{align}
\label{eqn:exct_cmptd_optml_cost_cntrl}
\begin{aligned}
\hatpi^{i+1}(\hat{\xi}) 
&\equiv 
\pi^{i+1}(x^{i+1} + \hat{V} \hat{\xi})
\\
\hatkappa^{i+1}(\hat{\xi}) 
&\equiv 
\kappa^{i+1}(x^{i+1} + \hat{V} \hat{\xi})
\end{aligned}
& &
\begin{aligned}
\tildepi(\xi) 
&\equiv 
\pi(x^{i+1} + V \xi)
\\
\tildekappa(\xi) 
&\equiv 
\kappa(x^{i+1} + V \xi)
.
\end{aligned}
\end{align}

We defined the \emph{Kronecker derivative} notation $\tkronderivelip{x}{x}$ in \S \ref{sec:hghr_ordr_prtls_off_alb} and the appendix, and it is merely a bookkeeping mechanism for mixed partial derivatives.
The patchy algorithm computes the $\ith{k}$ order partial derivatives with respect to the change of variables of the optimal cost, then recovers the partial derivatives with respect to the original state space variables by the formula
\begin{equation}
\label{eqn:rcvrd_prtls_cmptd}
\underset{k \text{ times}}{\kronderivelip{x}{x}} [\pi^{i+1}(x)]_{x=x^{i+1}}
= 
\underset{k \text{ times}}{\kronderivelip{\hat{\xi}}{\hat{\xi}}} [\hat{\pi}^{i+1}(\hat{\xi})]_{\xi=0}
\underset{k \text{ times}}{\kronprodelip{\hat{V}^T}{\hat{V}^T}}
.
\end{equation}
The exact $\ith{k}$ order partial derivatives of the optimal cost with respect to the original state space variables are recovered by 
\begin{equation}
\label{eqn:rcvrd_prtls_exct}
\underset{k \text{ times}}{\kronderivelip{x}{x}} [\pi(x)]_{x=x^{i+1}}
= 
\underset{k \text{ times}}{\kronderivelip{\xi}{\xi}} [\tilde{\pi}(\xi)]_{\xi=0}
\underset{k \text{ times}}{\kronprodelip{V^T}{V^T}}
.
\end{equation}

There are two main ideas behind the proof of Theorem \ref{thm:opt_cost_local_trunc_err_higer_order}.
The left hand side of \eqref{eqn:rcvrd_prtls_exct} is the same for any choice of orthogonal matrix $V$ in the change of variables $x = x^{i+1} + V \xi$.
In the case of the computed partial derivatives of the optimal cost, the entries in $\kronderivelip{\hat{\xi}}{\hat{\xi}}$ appearing on the right hand side of \eqref{eqn:rcvrd_prtls_cmptd} are a mixture of characteristic and non characteristic partial derivatives.
Despite this, the computed partial derivatives with respect to the original state space variables of the optimal cost are invariant under any choice of orthonormal vectors $\{\hat{V}^j\}_{j=1}^n$ in the change of variables $x = x^{i+1} + \hat{V} \hat{\xi}$ as long as $\hat{V}^1 = \dot{x}^{i+1} / \norm{\dot{x}^{i+1}}$.
This is the first main idea and is the content of Lemma \ref{lem:char_prtls_chng_crds_rltn}, Corollary \ref{cor:approx_char_prtls_chng_crds_rltn}, and Lemma \ref{lem:approx_prtls_chng_crds_rltn}.

The second idea is that there exist two convenient changes of variables that simplify our task of establishing the desired bound on the difference between the partial derivatives with respect to the original state space variables of the exact and computed optimal cost by bounding the difference between the partial derivatives under the convenient changes of variables.
The advantage of this approach is that we may treat the case of bounding the difference between the computed and exact characteristic partial derivatives independently of the non characteristic partial derivatives.

Once we have established these two facts, we will prove the theorem for the second order partial derivatives of the optimal cost and proceed by induction.

The relevant interpretation of Lemma \ref{lem:char_prtls_chng_crds_rltn} is that, taken as a group, the characteristic partial derivatives that the patchy algorithm computes at the current patch point can be expressed as an orthogonal transformation of a vector that depends only on partial derivatives of the problem data at the current patch point, and on partial derivatives of the optimal cost that are of strictly lower order than the characteristic partial derivatives being computed.
Secondly, all these partial derivatives are with respect to the original state space variables.
\begin{lemma}
\label{lem:char_prtls_chng_crds_rltn}
Suppose $V$ is an orthogonal matrix whose first column is the exact normalized optimal direction, $V^1 = \dot{x}/\norm{\dot{x}}$ at $x=x^{i+1}$.
Let $\tilde{\pi}(\xi)$ denote the optimal cost under the change of variables $x = x^{i+1} + V \xi$.
Then, at  $x=x^{i+1}$, there exist row vectors $w^p \in \mathbb{R}^{n^{p-1} \times 1}$, independent of $V^2, \ldots ,V^n$, such that the exact characteristic partial derivatives of the optimal cost satisfy
\begin{align}
\label{eqn:char_prtls_crds_rltn}
\begin{split}
\pderiv{}{}{\xi}\Bigl[ \pderiv{\tilde{\pi}}{}{\xi_1}(\xi) \Bigr]_{\xi = 0}
&=
\bigpars{w^2(x^{i+1})}^T V \\
\pderiv{}{}{\xi} \otimes \pderiv{}{}{\xi}\Bigl[ \pderiv{\tilde{\pi}}{}{\xi_1}(\xi) \Bigr]_{\xi = 0}
&=
\bigpars{w^3(x^{i+1})}^T V \otimes V \\
& \mspace{11 mu} \vdots \\
\kronderivelip{\xi}{\xi} \Bigl[ \pderiv{\tilde{\pi}}{}{\xi_1}(\xi) \Bigr]_{\xi = 0}
&=
\bigpars{w^p(x^{i+1})}^T \kronprodelip{V}{V} \\
\end{split}
\end{align}
where $w^k(x^{i+1})$ depends only on partial derivatives of the problem data and partial derivatives with respect to the original state space variables of the optimal cost of order $k-1$ and lower.

An analogous statement is true for the computed characteristic partial derivatives of the optimal cost.
\begin{proof}
To reduce the number of terms in some of the ensuing formulas, let $\tilde{F}$ and $\tilde{\ell}$ denote the dynamics and lagrangian as functions of the new coordinates.
\begin{align*}
\tilde{F}(\xi) \equiv \tilde{f}(\xi) + \tilde{g}(\xi) \tilde{\kappa}(\xi)
& &
\tilde{\ell}(\xi) \equiv \tilde{q}(\xi) + \frac{1}{2}\tilde{r}(\xi) \tilde{\kappa}(\xi)^2
\end{align*}
where the tilde functions appearing on the right hand sides are defined by the change of variables $x=x^{i+1} + V \xi$.

We will prove the lemma in the representative case of the third order characteristic partial derivatives of the optimal cost.
It follows from the HJB equations \eqref{eqn:drvtn_trnsfrms_HJBa} that a formula for the exact third order characteristic partial derivatives of the optimal cost is
\begin{multline}
\label{eqn:3rd_ordr_char_prtl}
\frac{\partial^3 \tilde{\pi}}{\partial \xi_{j_1} \partial \xi_{j_2} \partial \xi_1}(0)
=
-\frac{1}{\norm{\dot{x}}}
\Biggl[ 
\ppderiv{\tilde{\pi}}{2}{\xi_{j_1}}{}{\xi}{}(0) \pderiv{\tilde{F}}{}{\xi_{j_2}}(0)
+ \ppderiv{\tilde{\pi}}{2}{\xi_{j_2}}{}{\xi}{}(0) \pderiv{\tilde{F}}{}{\xi_{j_1}}(0)
\\
+ \pderiv{\tilde{\pi}}{}{\xi}(0) \ppderiv{\tilde{F}}{2}{\xi_{j_1}}{}{\xi_{j_2}}{}(0)
+ \ppderiv{\tilde{\ell}}{2}{\xi_{j_1}}{}{\xi_{j_2}}{}(0)
\Biggr]
.
\end{multline}
The right hand side of the formula \eqref{eqn:3rd_ordr_char_prtl} contains three distinctive types of terms characterized by the order of the partial derivatives of $\tilde{\pi}$, $\tilde{F}$, and $\tilde{\ell}$.
The first type is the product of a second order partial derivative of $\tilde{\pi}$ and a first order partial derivative of $\tilde{F}$, the second type is the product of a first order partial of $\tilde{\pi}$ and a second order partial of $\tilde{F}$, and the final type is a second order partial of $\ell$.

We will show that each type of term can be written as the product of a row vector that depends only on the partial derivatives of both $\pi(x)$ and $F(x)$ at $x=x^{i+1}$ and the column vector $V^{j_1} \otimes V^{j_2}$.
We tackle the first type of term first.
A formula for $\tppderiv{\tilde{\pi}}{2}{\xi_{j_1}}{}{\xi}{}(0) \tpderiv{\tilde{F}}{}{\xi_{j_2}}(0)$ is
\begin{equation} \label{eqn:3rd_ordr_char_prtl_term_1}
\ppderiv{\tilde{\pi}}{2}{\xi_{j_1}}{}{\xi}{}(0) \pderiv{\tilde{F}}{}{\xi_{j_2}}(0)
= 
\Biggl[
\sum_{\sigma=1}^n
\Bigpars{
\pderiv{}{}{x} \Bigl[ \pderiv{\pi}{}{x_{\sigma}}(x) \Bigr]_{x=x^{i+1}}
}
\otimes
\Bigpars{
\pderiv{}{}{x}\Bigl[ F_{\sigma}(x) \Bigr]_{x=x^{i+1}}
}
\Biggr] 
V^{j_1} \otimes V^{j_2}
.
\end{equation}
The quantity on the right hand side of \eqref{eqn:3rd_ordr_char_prtl_term_1} depends only on second order and lower partial derivatives of $\pi(x)$ and $F(x)$ evaluated at $x^{i+1}$.
To derive a formula for $\tppderiv{\tilde{\pi}}{2}{\xi_{j_2}}{}{\xi}{}(0) \tpderiv{\tilde{F}}{}{\xi_{j_1}}(0)$ with the desired factor $V^{j_1} \otimes V^{j_2}$, we use the fact that for every permutation $(i_1, i_2)$ of $(j_1,j_2)$, there exists a permutation matrix $P$, that depends only on the permutation and not the choice of basis vectors, such that $ V^{i_1} \otimes V^{i_2} = P ( V^{j_1} \otimes V^{j_2} )$.
Therefore, a relevant formula is%
\begin{multline*} %
\ppderiv{\tilde{\pi}}{2}{\xi_{j_2}}{}{\xi}{}(0) \pderiv{\tilde{F}}{}{\xi_{j_1}}(0) = \\
\Biggl[
\sum_{\sigma=1}^n
\Bigpars{
\pderiv{}{}{x} \Bigl[ \pderiv{\pi}{}{x_{\sigma}}(x) \Bigr]_{x=x^{i+1}}
}
\otimes
\Bigpars{
\pderiv{}{}{x}\Bigl[ F_{\sigma}(x) \Bigr]_{x=x^{i+1}}
}
P^T
\Biggr] 
V^{j_1} \otimes V^{j_2}
\end{multline*}

Although tedious, we can derive formulas for the other terms in \eqref{eqn:3rd_ordr_char_prtl} that are the product of a row vector that depends only on the second order and lower partial derivatives of the optimal cost and problem data with respect to the original state space variables, and the column vector $V^{j_1}$ and $V^{j_2}$.
It follows that there exists $w^3(x^{i+1}) \in \mathbb{R}^{n^2 \times 1}$ such that
\begin{equation*}
\frac{\partial \tilde{\pi}^3}{\partial \xi_{j_1} \partial \xi_{j_2} \partial \xi_1}(0)
=
\bigpars{w^3(x^{i+1})}^T V^{j_1} \otimes V^{j_2}
\end{equation*}
so the formula for $\tpderiv{}{}{\xi} \otimes \pderiv{}{}{\xi}\Bigl[ \pderiv{\tilde{\pi}}{}{\xi_1}(\xi) \Bigr]_{\xi = 0}$ in the theorem statement follows.

The proof for the computed characteristic partial derivatives is analogous.
\end{proof}
\end{lemma}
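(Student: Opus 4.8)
The plan is to prove the identities \eqref{eqn:char_prtls_crds_rltn} by induction on the order $k$ of the characteristic partial derivative, with the base case $k=2$ supplied by a single differentiation of the first transformed HJB equation \eqref{eqn:drvtn_trnsfrms_HJBa} (the order two analogue of \eqref{eqn:3rd_ordr_char_prtl}). The engine of the induction is that $x = x^{i+1} + V\xi$ is a linear change of variables, so for any smooth scalar $h$ the chain rule gives
\begin{equation*}
\pderivelip{\tilde{h}}{p}{\xi_{i_1}}{}{\xi_{i_p}}{}(0)
=
\kronderivelip{x}{x}\bigbracks{h(x)}_{x=x^{i+1}}
\bigpars{\kronprodelip{V^{i_1}}{V^{i_p}}},
\end{equation*}
and likewise componentwise for the vector valued $\tilde{F}$ and for $\tilde{\ell}$, exactly as in \eqref{eqn:rcvrd_prtls_exct}. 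Thus every partial derivative taken in the $\xi$ coordinates is a $V$ independent original coordinate Kronecker derivative contracted against a Kronecker product of columns of $V$. In the base case, each of $\pderiv{\tilde{\pi}}{}{\xi}(0)\pderiv{\tilde{F}}{}{\xi_{j_1}}(0)$ and $\pderiv{\tilde{\ell}}{}{\xi_{j_1}}(0)$ becomes, by this identity, a row vector of first order partials of $\pi$ and of the problem data times the single factor $V^{j_1}$, which produces $w^2$.

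For the inductive step I would differentiate the first transformed HJB equation, written as $0 = \pderiv{\tilde{\pi}}{}{\xi}(\xi)\tilde{F}(\xi) + \tilde{\ell}(\xi)$, a total of $k-1$ times with respect to $\xi_{j_1},\ldots,\xi_{j_{k-1}}$, evaluate at $\xi=0$, and expand by the Leibniz rule. The unique term in which all $k-1$ derivatives fall on $\pderiv{\tilde{\pi}}{}{\xi}$ contracts $\partial^k\tilde{\pi}$ with $\tilde{F}(0)$; since $V^1 = \dot{x}/\norm{\dot{x}}$ we have $\tilde{F}(0) = V^T\dot{x} = \norm{\dot{x}}\,e_1$, so this term collapses to $\norm{\dot{x}}$ times the sought characteristic partial $\kronderivelip{\xi}{\xi}\bigbracks{\pderiv{\tilde{\pi}}{}{\xi_1}(\xi)}_{\xi=0}$, which is precisely why a single $\xi_1$ derivative survives. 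Every remaining term carries $\pderiv{\tilde{\pi}}{}{\xi}$ differentiated strictly fewer than $k-1$ times, so only partials of $\tilde{\pi}$ of order $\le k-1$ enter those terms.

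The delicate point is that the partials of $\tilde{F}$ and $\tilde{\ell}$ in the remaining terms carry derivatives of the control, and a derivative of $\tilde{\kappa}$ of order $m$ is itself built from cost partials of order $m+1$; naively the top derivative $\partial^{k-1}\tilde{\kappa}$ would reintroduce an order $k$ cost partial. This is resolved exactly as in the second order derivation: collecting the coefficient of $\partial^{k-1}\tilde{\kappa}(0)$ arising from $\pderiv{\tilde{\pi}}{}{\xi}\tilde{F}$ through $\tilde{g}\tilde{\kappa}$ and from $\tilde{\ell}$ through $\tfrac12\tilde{r}\tilde{\kappa}^2$ gives the factor $\pderiv{\tilde{\pi}}{}{\xi}(0)\tilde{g}(0) + \tilde{r}(0)\tilde{\kappa}(0)$, which vanishes by the second transformed HJB equation \eqref{eqn:drvtn_trnsfrms_HJBb}. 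Hence only control derivatives of order $\le k-2$ remain, and these depend on cost partials of order $\le k-1$ only, as required by the statement.

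It remains to reassemble the surviving lower order terms into the common factor $\kronprodelip{V^{j_1}}{V^{j_{k-1}}}$. Applying the chain rule identity to each factor of each term, as modeled by \eqref{eqn:3rd_ordr_char_prtl_term_1}, rewrites the term as a $V$ independent row vector of original coordinate partials of $\pi$ and of the problem data (all of order $\le k-1$) times a Kronecker product $\kronprodelip{V^{i_1}}{V^{i_{k-1}}}$ in whatever index order $(i_1,\ldots,i_{k-1})$ arose. For any permutation of $(j_1,\ldots,j_{k-1})$ there is a permutation matrix $P$, depending only on the permutation and not on $V$, with $\kronprodelip{V^{i_1}}{V^{i_{k-1}}} = P\bigpars{\kronprodelip{V^{j_1}}{V^{j_{k-1}}}}$; multiplying the row vector on the right by $P$ aligns every term to the common factor. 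Summing these $V$ independent row vectors and dividing by $\norm{\dot{x}}$ defines $w^k(x^{i+1})$ and establishes the $k$-th identity. The main obstacle is purely combinatorial, namely organizing the general Leibniz expansion and the index realignments so that the control cancellation and the Kronecker collection can be verified at every order rather than only for $k=3$; the only analytic input is the strict Lyapunov assumption, which guarantees $\norm{\dot{x}}\ne 0$. The argument for the computed characteristic partials is identical, with the computed solutions and $\hat{V},\hat{\xi}$ in place of the exact ones.
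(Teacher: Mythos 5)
Your proposal is correct and follows essentially the same route as the paper: both rest on the chain-rule identity that converts $\xi$-partials of $\tilde{\pi}$, $\tilde{F}$, $\tilde{\ell}$ into $V$-independent original-coordinate Kronecker derivatives contracted against $\kronprodelip{V^{j_1}}{V^{j_{k-1}}}$, together with the permutation-matrix realignment of mismatched index orders; the paper simply carries this out for the representative case $k=3$ and asserts the rest is analogous, while you organize it as an induction. Your explicit treatment of the cancellation of the top-order derivative of $\tilde{\kappa}$ via the second HJB equation \eqref{eqn:drvtn_trnsfrms_HJBb} is a point the paper leaves implicit in the pre-cancelled formulas of \S\ref{sec:deriv_formulas}, and it is exactly the right justification for the claim that $w^k$ involves only cost partials of order $k-1$ and lower.
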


The relevant interpretation of Corollary \ref{cor:approx_char_prtls_chng_crds_rltn} is that, under two changes of variables that share the  normalized computed optimal direction at the current patch point as their first basis vector, the two sets of computed characteristic partial derivatives of the optimal cost at the current patch point are related to each other by an orthogonal transformation.
\begin{corollary}
\label{cor:approx_char_prtls_chng_crds_rltn}
If $\tilde{V}$ and $\hat{V}$ are are orthogonal matrices whose first columns are the normalized computed optimal direction at $x=x^{i+1}$, that is $\tilde{V}^1 = \hat{V}^1 = \dot{x}^{i+1} / \norm{\dot{x}^{i+1}}$, then the approximate second and higher order approximate characteristic partial derivatives of the optimal cost computed under the changes of variables
\begin{align*}
x = x^{i+1} + \tilde{V} \tilde{\xi} 
& &
x = x^{i+1} + \hat{V} \hat{\xi}
\end{align*}
are related by the orthogonal transformation
\begin{equation*}
\pderiv{}{}{\tilde{\xi}} \otimes \cdots \otimes \pderiv{}{}{\tilde{\xi}}
\Bigbracks{
\pderiv{\tilde{\pi}^{i+1}}{}{\tilde{\xi}_1}(\tilde{\xi})
}_{\tilde{\xi}=0}
= %
\pderiv{}{}{\hat{\xi}} \otimes \cdots \otimes \pderiv{}{}{\hat{\xi}}
\Bigbracks{
\pderiv{\hat{\pi}^{i+1}}{}{\hat{\xi}_1}(\hat{\xi})
}_{\hat{\xi}=0}
\hat{V}^T \tilde{V} \otimes \cdots \otimes \hat{V}^T \tilde{V}
.
\end{equation*}
\begin{proof}
The proof for the case of the approximate third order partials of the optimal cost is representative.
By Lemma \ref{lem:char_prtls_chng_crds_rltn}, there exists a single vector $w^{i+1}$ that depends only on the previously  computed lower order partial derivatives of $\pi^{i+1}(x)$ at $x=x^{i+1}$ such that the characteristic partials under both coordinate transformations are given by
\begin{align*}
\pderiv{}{}{\tilde{\xi}} \otimes \pderiv{}{}{\tilde{\xi}}
\Bigbracks{
\pderiv{\tilde{\pi}^{i+1}}{}{\tilde{\xi}_1}(\tilde{\xi})
}_{\tilde{\xi}=0}
&=
\bigpars{w^{i+1}}^T \tilde{V} \otimes \tilde{V}
\intertext{and}
\pderiv{}{}{\hat{\xi}} \otimes \pderiv{}{}{\hat{\xi}}
\Bigbracks{
\pderiv{\hat{\pi}^{i+1}}{}{\hat{\xi}_1}(\hat{\xi})
}_{\hat{\xi}=0}
&=
\bigpars{w^{i+1}}^T \hat{V} \otimes \hat{V}
.
\end{align*}
The conclusion of the corollary immediately follows from the identity
\begin{equation*}
\bigpars{w^{i+1}}^T \bigpars{ \kronprodelip{\tilde{V}}{\tilde{V}} }
=
\bigpars{w^{i+1}}^T 
\bigpars{\kronprodelip{\hat{V}}{\hat{V}} } \bigpars{ \kronprodelip{\hat{V}^T \tilde{V}}{\hat{V}^T \tilde{V}} }
.
\end{equation*}
\end{proof}
\end{corollary}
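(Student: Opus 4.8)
The plan is to reduce the corollary to the computed-cost version of Lemma \ref{lem:char_prtls_chng_crds_rltn} and then close with the mixed-product rule for Kronecker products. First I would apply that representation once for each change of variables. Since $\tilde{V}^1 = \hat{V}^1 = \dot{x}^{i+1}/\norm{\dot{x}^{i+1}}$, the scalar $\norm{\dot{x}^{i+1}}$ and the first-order data feeding the recursion are common to both coordinate systems. For the order under consideration, the lemma then produces a single row vector $w^{i+1}$, assembled only from the problem data at $x^{i+1}$ and from the lower-order computed partial derivatives of $\pi^{i+1}$ taken with respect to the \emph{original} state-space variables, such that the characteristic partials satisfy $\kronderivelip{\tilde{\xi}}{\tilde{\xi}}[\pderiv{\tilde{\pi}^{i+1}}{}{\tilde{\xi}_1}(\tilde{\xi})]_{\tilde{\xi}=0} = (w^{i+1})^T \kronprodelip{\tilde{V}}{\tilde{V}}$ and likewise with $\hat{V}$ replacing $\tilde{V}$.

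The crucial point is that the \emph{same} vector $w^{i+1}$ appears in both formulas, which requires checking that the lower-order computed partials of $\pi^{i+1}$ with respect to the original variables are independent of the orthonormal completion $\{V^j\}_{j\ge 2}$ of the optimal direction. I would establish this by induction on the order of differentiation. The base case is the gradient $\pderiv{\pi^{i+1}}{}{x}(x^{i+1})$ and the optimal direction, which are computed before any change of variables and are therefore common to both. For the inductive step, once the characteristic partials of a given order are related as above, the recovery formula \eqref{eqn:rcvrd_prtls_cmptd} together with the inherited non-characteristic partials shows that the recovered original-coordinate partials of that order coincide, so $w^{i+1}$ at the next order is again unchanged.

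With $w^{i+1}$ identified as common to both representations, the conclusion follows from the mixed-product rule $(\kronprodelip{A}{B})(\kronprodelip{C}{D}) = \kronprodelip{AC}{BD}$ and orthogonality of $\hat{V}$. Because $\hat{V}\hat{V}^T = I$, applying $\kronprodelip{\hat{V}}{\hat{V}}$ to $\kronprodelip{\hat{V}^T \tilde{V}}{\hat{V}^T \tilde{V}}$ collapses to $\kronprodelip{\tilde{V}}{\tilde{V}}$, so that $(w^{i+1})^T \kronprodelip{\tilde{V}}{\tilde{V}} = (w^{i+1})^T \kronprodelip{\hat{V}}{\hat{V}} (\kronprodelip{\hat{V}^T\tilde{V}}{\hat{V}^T\tilde{V}})$. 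Substituting the $\hat{V}$ representation of the characteristic partials into the right-hand side yields precisely the stated orthogonal transformation.

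I expect the main obstacle to be the inductive bookkeeping that makes $w^{i+1}$ basis-independent, rather than the algebra of the final identity. The delicate part is confirming that the mixture of characteristic partials (computed from the HJB relation) and non-characteristic partials (inherited from $\pi^i$) recovers the same original-coordinate derivatives under both changes of variables at every order below the one in question; granting that, the Kronecker mixed-product collapse is routine.
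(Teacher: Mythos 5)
Your proposal is correct and follows essentially the same route as the paper: invoke Lemma \ref{lem:char_prtls_chng_crds_rltn} to obtain a single vector $w^{i+1}$ common to both changes of variables, then close with the Kronecker mixed-product identity and $\hat{V}\hat{V}^T = I$. The only difference is that you make explicit the inductive argument for why the lower-order original-coordinate partials (and hence $w^{i+1}$) are basis-independent, a point the paper's proof leaves implicit by simply citing the lemma.
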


The content of Lemma \ref{lem:approx_prtls_chng_crds_rltn} is that, under two orthogonal changes of variables that share the normalized computed optimal direction at the current patch point as their first basis vector, the two sets of computed characteristic and non characteristic partial derivatives are related by an orthogonal transformation.
Thus, it follows immediately from \eqref{eqn:rcvrd_prtls_cmptd} that the computed partial derivatives of the optimal cost with respect to the original state space variables at the current patch point are the same under two different orthogonal changes of variables as long as the changes of variables share the normalized computed optimal direction as their first basis vector.
\begin{lemma}
\label{lem:approx_prtls_chng_crds_rltn}
If $\tilde{V}$ and $\hat{V}$ are orthogonal matrices whose first columns are the normalized optimal direction at $x=x^{i+1}$, that is $\tilde{V}^1 = \hat{V}^1 = \dot{x}^{i+1} / \norm{\dot{x}^{i+1}}$, then the approximate second and higher order approximate partial derivatives of the optimal cost computed under the changes of variables
\begin{align*}
x = x^{i+1} + \tilde{V} \tilde{\xi} 
& &
x = x^{i+1} + \hat{V} \hat{\xi}
\end{align*}
are related to each other by
\begin{multline*}
\pderiv{}{}{\tilde{\xi}} \otimes \cdots \otimes \pderiv{}{}{\tilde{\xi}}
\bigbracks{\tilde{\pi}^{i+1}(\tilde{\xi})}_{\tilde{\xi}=0}
\bigpars{\tilde{V}^T \otimes \cdots \otimes \tilde{V}^T}
= \\
\pderiv{}{}{\hat{\xi}} \otimes \cdots \otimes \pderiv{}{}{\hat{\xi}}
\bigbracks{\hat{\pi}^{i+1}(\hat{\xi})}_{\hat{\xi}=0}
\bigpars{\hat{V}^T \otimes \cdots \otimes \hat{V}^T}
.
\end{multline*}
\begin{proof}
The conclusion of the theorem will follow once we establish that the quantity
\begin{multline}
\label{eqn:prtls_st_sp_diff}
\pderiv{}{}{\tilde{\xi}} \otimes \cdots \otimes \pderiv{}{}{\tilde{\xi}}
\bigbracks{\tilde{\pi}^{i+1}(\tilde{\xi})}_{\tilde{\xi}=0}
\\
-
\pderiv{}{}{\hat{\xi}} \otimes \cdots \otimes \pderiv{}{}{\hat{\xi}}
\bigbracks{\hat{\pi}^{i+1}(\hat{\xi})}_{\hat{\xi}=0}
\bigpars{\hat{V}^T \tilde{V} \otimes \cdots \otimes \hat{V}^T \tilde{V}}
\end{multline}
is zero.
We will prove the lemma for the third order partial derivatives of the computed optimal cost, which is a representative case.

It follows immediately from the definition of the $\tpderiv{}{}{x} \otimes \cdots \otimes \tpderiv{}{}{x}$ notation that each entry in the term appearing on the left of \eqref{eqn:prtls_st_sp_diff} can be extracted by multiplying by one of the standard basis vectors.
Any standard basis vector in $\mathbb{R}^{n^3}$ can be expressed as a Kronecker product of three standard basis vectors in $\mathbb{R}^n$, so
\begin{equation*}
\pppderiv{\tilde{\pi}^{i+1}}{3}{\tilde{\xi}_{j_1}}{}{\tilde{\xi}_{j_2}}{}{\tilde{\xi}_{j_3}}{}(0)
=
\pderiv{}{}{\tilde{\xi}} \otimes \pderiv{}{}{\tilde{\xi}} \otimes \pderiv{}{}{\tilde{\xi}}
\bigbracks{\tilde{\pi}^{i+1}(\tilde{\xi})}_{\tilde{\xi}=0}
e_{j_1} \otimes e_{j_2} \otimes e_{j_3},
\end{equation*}
where $e_j$ denotes the $\ith{j}$ column of the identity matrix.
We will establish that the difference in \eqref{eqn:prtls_st_sp_diff} is zero by multiplying it by $e_{j_1} \otimes e_{j_2} \otimes e_{j_3}$ and showing that
\begin{equation*}
\pppderiv{\tilde{\pi}^{i+1}}{3}{\tilde{\xi}_{j_1}}{}{\tilde{\xi}_{j_2}}{}{\tilde{\xi}_{j_3}}{}(0)
-
\pderiv{}{}{\hat{\xi}} \otimes \pderiv{}{}{\hat{\xi}} \otimes \pderiv{}{}{\hat{\xi}}
\bigbracks{\hat{\pi}^{i+1}(\hat{\xi})}_{\hat{\xi}=0}
\bigpars{\hat{V}^T \tilde{V}^{j_1} \otimes \hat{V}^T\tilde{V}^{j_2} \otimes \hat{V}^T \tilde{V}^{j_3}}
\end{equation*}
is zero for any $1 \le j_1,j_2,j_3 \le n$, which is equivalent to showing that
\begin{multline} \label{eqn:prtls_st_sp_diff_sclr}
\pppderiv{\tilde{\pi}^{i+1}}{3}{\tilde{\xi}_{j_1}}{}{\tilde{\xi}_{j_2}}{}{\tilde{\xi}_{j_3}}{}(0)
\\
-
\sum_{\sigma_1=1}^n \sum_{\sigma_2=1}^n \sum_{\sigma_3=1}^n
\Bigpars{
\pppderiv{\hat{\pi}^{i+1}}{3}{\hat{\xi}_{\sigma_{j_1}}}{}{\hat{\xi}_{\sigma_{j_2}}}{}{\hat{\xi}_{\sigma_{j_3}}}{}(0)
\bigpars{\hat{V}^{\sigma_1} \cdot \tilde{V}^{j_1}} 
\bigpars{\hat{V}^{\sigma_2} \cdot \tilde{V}^{j_2}} 
\bigpars{\hat{V}^{\sigma_3} \cdot \tilde{V}^{j_3}}
}
\end{multline}
is zero.

The partial derivatives of $\tilde{\pi}^{i+1}(\tilde{\xi})$ and $\hat{\pi}^{i+1}(\hat{\xi})$ at $\tilde{\xi}=\hat{\xi}=0$ are computed by two different methods depending on whether they are characteristic or non characteristic partial derivatives, so the proof that the quantity in \eqref{eqn:prtls_st_sp_diff_sclr} is zero naturally breaks up into two cases.
In the characteristic case, at least one of the coordinate indices $j_1$, $j_2$, or $j_3$ is one, and we may assume that $j_3 = 1$ without a meaningful loss of generality.
The columns of $\tilde{V}$ and $\hat{V}$ are both orthonormal sets, and $\tilde{V}^1 = \hat{V}^1$ by assumption, so $\hat{V}^{\sigma_3} \cdot\tilde{V}^{1}= \delta_{1, \sigma_3}$, where $\delta_{1, \sigma_3}$ is the Kronecker delta.
Therefore for any $j_1,j_2$ in $1 \le j_1,j_2 \le n$, the difference in \eqref{eqn:prtls_st_sp_diff_sclr} reduces to a quantity that depends only on characteristic partial derivatives of the optimal cost
\begin{multline} \label{eqn:char_prtl_deriv_sum}
\pppderiv{\tilde{\pi}^{i+1}}{3}{\tilde{\xi}_{j_1}}{}{\tilde{\xi}_{j_2}}{}{\tilde{\xi}_1}{}(0)
-
\sum_{\sigma_1=1}^n \sum_{\sigma_2=1}^n
\Bigpars{
\pppderiv{\hat{\pi}^{i+1}}{3}{\hat{\xi}_{\sigma_{j_1}}}{}{\hat{\xi}_{\sigma_{j_2}}}{}{\hat{\xi}_1}{}(0)
\bigpars{\hat{V}^{\sigma_1} \cdot \tilde{V}^{j_1}} \bigpars{\hat{V}^{\sigma_2} \cdot \tilde{V}^{j_2}} 
} 
\\
\begin{aligned}
&=
\pppderiv{\tilde{\pi}^{i+1}}{3}{\tilde{\xi}_{j_1}}{}{\tilde{\xi}_{j_2}}{}{\tilde{\xi}_1}{}(0)
-
\pderiv{}{}{\hat{\xi}} \otimes \pderiv{}{}{\hat{\xi}} 
\Bigbracks{\pderiv{\hat{\pi}^{i+1}}{}{\hat{\xi}_1}(\hat{\xi}) }_{\hat{\xi}=0}
\bigpars{ \hat{V}^T \tilde{V}^{j_1} \otimes \hat{V}^T \tilde{V}^{j_2}} 
\\
&=
0
.
\end{aligned}
\end{multline}
The first equality follows from the definition of the $\tkronderivelip{x}{x}$, and the second equality follows from Corollary \ref{cor:approx_char_prtls_chng_crds_rltn}.
Therefore the quantity in \eqref{eqn:prtls_st_sp_diff_sclr} is zero whenever  $\pppderiv{\tilde{\pi}^{i+1}}{3}{\tilde{\xi}_{j_1}}{}{\tilde{\xi}_{j_2}}{}{\tilde{\xi}_{j_3}}{}(0)$ is a characteristic partial derivative.

We now show that the difference in \eqref{eqn:prtls_st_sp_diff_sclr} is zero in the non characteristic partial derivative case, that is, when none of the coordinate indices $j_1$, $j_2$, or $j_3$ are one.
By a standard argument
\begin{multline*}
\pppderiv{\tilde{\pi}^{i+1}}{3}{\tilde{\xi}_{j_1}}{}{\tilde{\xi}_{j_2}}{}{\tilde{\xi}_{j_3}}{} (0)
=\\
\Bigbracks{
\pderiv{}{}{x} \otimes \pderiv{}{}{x} \otimes \pderiv{}{}{x}
\bigbracks{\pi^i(x)}_{x=x^{i+1}}
\bigpars{\hat{V} \otimes \hat{V} \otimes \hat{V}}
}
\Bigbracks{
\hat{V}^T \tilde{V}^{j_1} \otimes \hat{V}^T \tilde{V}^{j_2} \otimes \hat{V}^T \tilde{V}^{j_3}
}
\end{multline*}
The right hand side of the previous equation is the product of a row and column vector. %
Since none of $j_1$, $j_2$, or $j_3$ is one, the previous equation can be rewritten as
\begin{multline} \label{eqn:nonchar_prtl_deriv_sum}
\pppderiv{}{3}{\tilde{\xi}_{j_1}}{}{\tilde{\xi}_{j_2}}{}{\tilde{\xi}_{j_3}}{} 
\bigbracks{\tilde{\pi}^{i+1}(\tilde{\xi})}_{\tilde{\xi}=0}
= \\
\sum_{\sigma_1=2}^n \sum_{\sigma_2=2}^n \sum_{\sigma_3=2}^n
\Bigpars{
\pderiv{}{}{x} \otimes \pderiv{}{}{x} \otimes \pderiv{}{}{x}
\bigbracks{\pi^i(x)}_{x=x^{i+1}}
\bigpars{\hat{V}^{\sigma_1} \otimes \hat{V}^{\sigma_2} \otimes \hat{V}^{\sigma_3}}
}
\\
\times
\Bigpars{
\bigpars{\hat{V}^{\sigma_1} \cdot \tilde{V}^{j_1}}
\bigpars{\hat{V}^{\sigma_2} \cdot \tilde{V}^{j_2}}
\bigpars{\hat{V}^{\sigma_3} \cdot \tilde{V}^{j_3}}
}
\end{multline}
where the summations all start at two because $\hat{V}^1 \cdot \tilde{V}^{j_1}=\hat{V}^1 \cdot \tilde{V}^{j_2}=\hat{V}^1 \cdot \tilde{V}^{j_3}=0$. 
Thus, the sum on the right hand side of \eqref{eqn:nonchar_prtl_deriv_sum} depends only on non characteristic partial derivatives of $\hat{\pi}^{i+1}(\hat{\xi})$ at $\hat{\xi}=0$.
Therefore when $j_1, j_2, j_3 \ne 1$, $\tpppderiv{\tilde{\pi}^{i+1}}{3}{\tilde{\xi}_j}{}{\tilde{\xi}_k}{}{\tilde{\xi}_{\ell}}{} (0)$ can be expressed as a linear transformation of $\kronprodelip{\tpderiv{}{}{\hat{\xi}}}{\tpderiv{}{}{\hat{\xi}}}[\hat{\pi}^{i+1}(\hat{\xi})]_{\hat{\xi}=0}$, whose entries are composed of both characteristic and non characteristic partial derivatives.
The conclusion of the theorem follows from the fact that \eqref{eqn:char_prtl_deriv_sum} and \eqref{eqn:nonchar_prtl_deriv_sum} cover all of \eqref{eqn:prtls_st_sp_diff_sclr}.
\end{proof}
\end{lemma}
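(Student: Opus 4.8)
The plan is to prove the stronger claim that the Kronecker-derivative difference
\[
\kronderivelip{\tilde{\xi}}{\tilde{\xi}}\bigbracks{\tilde{\pi}^{i+1}(\tilde{\xi})}_{\tilde{\xi}=0}
-
\kronderivelip{\hat{\xi}}{\hat{\xi}}\bigbracks{\hat{\pi}^{i+1}(\hat{\xi})}_{\hat{\xi}=0}
\bigpars{\kronprodelip{\hat{V}^T \tilde{V}}{\hat{V}^T \tilde{V}}}
\]
vanishes; the lemma then follows upon right-multiplying by $\kronprodelip{\tilde{V}^T}{\tilde{V}^T}$ and invoking the mixed-product property of the Kronecker product together with $(\hat{V}^T \tilde{V})\tilde{V}^T = \hat{V}^T$, which holds because $\tilde{V}$ is orthogonal. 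To show this difference is zero I would extract each entry by multiplying on the right by a Kronecker product $\kronprodelip{e_{j_1}}{e_{j_k}}$ of standard basis vectors, reducing the task to a scalar identity for every index tuple $(j_1, \dotsc, j_k)$ with $1 \le j_m \le n$. Since mixed partials commute I may reorder the indices freely, and the argument then splits according to whether some $j_m$ equals one (the \emph{characteristic} case) or every $j_m \ge 2$ (the \emph{non characteristic} case).

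In the characteristic case I would assume $j_k = 1$ without loss. Because the columns of $\tilde{V}$ and $\hat{V}$ are orthonormal and share their first column, $\hat{V}^{\sigma_k} \cdot \tilde{V}^1 = \delta_{1,\sigma_k}$, which collapses the innermost summation to its single $\sigma_k = 1$ term. The surviving partials of both $\tilde{\pi}^{i+1}$ and $\hat{\pi}^{i+1}$ then carry a $\tpderiv{}{}{\tilde{\xi}_1}$, respectively $\tpderiv{}{}{\hat{\xi}_1}$, factor and are therefore \emph{characteristic}, so the reduced scalar identity is exactly the entrywise form of the orthogonal-transformation relation furnished by Corollary \ref{cor:approx_char_prtls_chng_crds_rltn}. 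Hence the difference vanishes whenever some index is one.

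In the non characteristic case every $j_m \ge 2$, so every partial appearing on both sides is computed by inheritance from $\pi^i$. Applying the inheritance definition together with the chain rule, each such partial equals the fixed row vector $\tkronderivelip{x}{x}\bigbracks{\pi^i(x)}_{x=x^{i+1}}$ contracted against the appropriate Kronecker product of columns of $\tilde{V}$ or $\hat{V}$, so \emph{both} sides are expressed through the same frame-independent state-space derivatives of $\pi^i$. The key simplification is that $\hat{V}^1 \cdot \tilde{V}^{j_m} = \tilde{V}^1 \cdot \tilde{V}^{j_m} = 0$ for $j_m \ge 2$, so each inner sum over $\sigma_m$ effectively runs only over $\sigma_m \ge 2$ and, by $\hat{V}\hat{V}^T = I$, equals $\sum_{\sigma_m} \hat{V}^{\sigma_m}\bigpars{\hat{V}^{\sigma_m} \cdot \tilde{V}^{j_m}} = \hat{V}\hat{V}^T \tilde{V}^{j_m} = \tilde{V}^{j_m}$. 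Factoring the $k$-fold Kronecker sum on the $\hat{V}$ side into this product of column sums reproduces $\kronprodelip{\tilde{V}^{j_1}}{\tilde{V}^{j_k}}$ contracted against the same derivatives of $\pi^i$, which matches the $\tilde{V}$ side exactly, so the difference again vanishes.

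The step I expect to be the main obstacle is the bookkeeping in the characteristic case. One must verify that collapsing the $\sigma_k$ summation genuinely leaves only \emph{characteristic} partials, so that Corollary \ref{cor:approx_char_prtls_chng_crds_rltn} actually applies, and that after fixing $j_k = 1$ the reindexed sum matches the reduced Kronecker-derivative object $\tkronderivelip{\hat{\xi}}{\hat{\xi}}\bigbracks{\tpderiv{\hat{\pi}^{i+1}}{}{\hat{\xi}_1}(\hat{\xi})}_{\hat{\xi}=0}$ appearing there. Since the characteristic and non characteristic cases exhaust all index tuples, establishing both cases proves the sharpened claim, and via the recovery formula \eqref{eqn:rcvrd_prtls_cmptd} it shows that the recovered state-space partials of the computed optimal cost are independent of the orthogonal frame, provided that frame shares the normalized computed optimal direction as its first column.
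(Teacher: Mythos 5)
Your proposal is correct and takes essentially the same approach as the paper's proof: the same reduction to showing that the quantity in \eqref{eqn:prtls_st_sp_diff} vanishes, the same entrywise extraction by Kronecker products of standard basis vectors, and the same split into the characteristic case (collapsing the sum via $\hat{V}^{\sigma} \cdot \tilde{V}^1 = \delta_{1,\sigma}$ and invoking Corollary \ref{cor:approx_char_prtls_chng_crds_rltn}) and the non characteristic case (inheritance from $\pi^i$ together with $\hat{V}^1 \cdot \tilde{V}^{j} = 0$ for $j \ge 2$). Your explicit completeness identity $\hat{V}\hat{V}^T \tilde{V}^{j} = \tilde{V}^{j}$ simply makes concrete the paper's observation that the summations start at two, so no substantive difference remains.
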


We now turn to the task of constructing two convenient changes of variables, meaning we can pick $V$ and $\hat{V}$ in \eqref{eqn:chngs_of_vars} such that $V$ and $\hat{V}^T$ are almost inverses of each other. 
This is the content of Lemma \ref{lem:cnvnt_basis_id_pert_bnd}, which in turn relies on Lemma \ref{lem:canonical_basis_exists}, and Corollaries \ref{cor:canonical_mat_prod_iden_diff_ip_bnd} and \ref{cor:canonical_mat_prod_iden_diff_v_bnd}.%

\begin{lemma}
\label{lem:cnvnt_basis_id_pert_bnd}
Let $\dot{x}^{i+1}$ denote the optimal direction at $x=x^{i+1}$ computed by the patchy algorithm from the exact partial derivatives of the optimal cost at the previous patch point.
If the optimal cost $\pi(x)$ is a strict Lyapunov function and the distance between the current and previous patch point is less than the maximum consecutive patch point distance $h>0$, then there exists a constant $B < \infty$ and orthogonal matrices $V$ and $\hat{V}$ such that
\begin{align*}
V^1 = \frac{\dot{x}}{\norm{\dot{x}}} & & \hat{V}^1 = \frac{\dot{x}^{i+1}}{\norm{\dot{x}^{i+1}}}
\end{align*}
and
\begin{equation*}
\norm{\hat{V}^T V - I } \le B \norm{x^{i+1} - x^i}^{d+1}
\end{equation*}
where the degree of the approximate optimal cost polynomial $\pi^{i+1}$ is $d+1$. 

\begin{proof}
By Theorem \ref{thm:phi1_lte_bound}, there is a constant $T < \infty$ and a maximum consecutive patch point distance $h>0$ such that 
\begin{equation*}
\Bignorm{
\pderiv{\pi}{}{x}(x^{i+1}) - \pderiv{\pi^{i+1}}{}{x}(x^{i+1})
}
\le T \norm{x^{i+1} - x^i}^{d+1}
.
\end{equation*}
Therefore, there exists an $\epsilon > 0$ such that, for any $x^{i+1}$ in $\closedcompdomwoalb$ within a radius of $h$ of the previous patch point
\begin{align*}
\norm{\dot{x} - \dot{x}^{i+1}}
=
\mathcal{O} \bigpars{\norm{x^{i+1} - x^i}^{d+1}}
& &\text{and}& &
0 < \epsilon \le \norm{\dot{x}} - \norm{\dot{x} - \dot{x}^{i+1}}  
\end{align*}
The result of the lemma follows from Lemma \ref{lem:canonical_basis_exists} where $V$ and $\hat{V}$ are constructed, and Corollaries \ref{cor:canonical_mat_prod_iden_diff_ip_bnd} and \ref{cor:canonical_mat_prod_iden_diff_v_bnd} where the bound on $\norm{\hat{V}^T V - I}$ in the lemma statement is established.
\end{proof}
\end{lemma}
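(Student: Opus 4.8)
The plan is to reduce the lemma to a statement about two nearby unit vectors and then realize the two frames through a single rotation, so that $\hat{V}^T V$ is itself a rotation close to the identity. The first step is to promote the gradient error bound of Corollary~\ref{cor:phi1_lte_bound} to a bound on the optimal directions. That corollary supplies a constant $T$ and a maximum consecutive patch point distance with $\Bignorm{\dpidx(x^{i+1}) - \pderiv{\pi^{i+1}}{}{x}(x^{i+1})} \le T\norm{x^{i+1}-x^i}^{d+1}$. Since the optimal control is an affine function of the gradient, namely $\kappa = -\tfrac{1}{r}\,\dpidx\,g$ as in \eqref{eqn:drvtn_crnt_optml_cost_grdnt_optml_cntrl}, and since $f$, $g$, $r$ are continuous on the compact set $\closedcompdomwoalb$ with $r$ bounded below by Assumption~\ref{ass:qr_pos_def}, the optimal direction $\dot{x} = f + g\kappa$ is a Lipschitz function of the gradient on that set. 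Hence $\norm{\dot{x} - \dot{x}^{i+1}} \le C\norm{x^{i+1}-x^i}^{d+1}$ for some finite $C$.

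Next I would pass to the normalized directions. Because $\pi$ is a strict Lyapunov function (Assumption~\ref{ass:strict_lyup_pi}), $\dot{x}\ne 0$ on $\closedcompdomwoalb$, so $\norm{\dot{x}}\ge \epsilon_0 > 0$ there by compactness; shrinking $h$ so that $C h^{d+1} \le \epsilon_0/2$ keeps $\norm{\dot{x}^{i+1}}\ge \epsilon_0/2$, and the elementary estimate $\Bignorm{\tfrac{u}{\norm{u}}-\tfrac{v}{\norm{v}}} \le \tfrac{2}{\min(\norm{u},\norm{v})}\norm{u-v}$ yields $\norm{V^1 - \hat{V}^1} \le C'\norm{x^{i+1}-x^i}^{d+1}$, where I abbreviate $a \equiv V^1 = \dot{x}/\norm{\dot{x}}$ and $b \equiv \hat{V}^1 = \dot{x}^{i+1}/\norm{\dot{x}^{i+1}}$.

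With the two prescribed first columns now controlled, I would build the frames from one rotation. Let $V$ be any orthogonal matrix with first column $a$, let $R$ be the rotation that fixes $(\operatorname{span}\{a,b\})^\perp$ and carries $a$ to $b$ inside $\operatorname{span}\{a,b\}$ (with $R=I$ when $a=b$), and set $\hat{V} = RV$. Then $\hat{V}$ is orthogonal with first column $Ra = b$, so the required $\hat{V}^1 = b$ holds, while $\hat{V}^T V - I = V^T(R^T - I)V$ is orthogonally similar to $R-I$. Since $R-I$ vanishes off the two-plane $\operatorname{span}\{a,b\}$ and there acts as a planar rotation by the angle $\theta$ between $a$ and $b$, it is normal with nonzero eigenvalues of modulus $|e^{i\theta}-1| = 2\sin(\theta/2) = \norm{a-b}$; hence $\norm{\hat{V}^T V - I} = \norm{R-I} \le \sqrt{2}\,\norm{a-b} \le B\norm{x^{i+1}-x^i}^{d+1}$ with $B = \sqrt{2}\,C'$. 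This reproduces the near-identity bound that the canonical construction of Lemma~\ref{lem:canonical_basis_exists} and Corollaries~\ref{cor:canonical_mat_prod_iden_diff_ip_bnd}--\ref{cor:canonical_mat_prod_iden_diff_v_bnd} deliver, and Lemma~\ref{lem:approx_prtls_chng_crds_rltn} assures us we may use any such $\hat{V}$ with the prescribed first column.

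The main obstacle is the first step, not the geometry: one must check that the gradient discrepancy genuinely propagates to the optimal directions at the full order $d+1$, which relies on the uniform lower bound $r>0$ and the uniform boundedness of $f$ and $g$ on $\closedcompdomwoalb$, and one must shrink $h$ uniformly so that $\dot{x}^{i+1}$ stays bounded away from zero and normalization remains Lipschitz. Once the unit-vector estimate $\norm{a-b} = \mathcal{O}(\norm{x^{i+1}-x^i}^{d+1})$ is in hand, the similarity identity $\hat{V}^T V - I = V^T(R^T-I)V$ makes the final bound essentially exact.
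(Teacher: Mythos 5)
Your proposal is correct, and the first half coincides with the paper's argument: both deduce $\norm{\dot{x} - \dot{x}^{i+1}} = \mathcal{O}(\norm{x^{i+1}-x^i}^{d+1})$ from the gradient bound of Theorem \ref{thm:phi1_lte_bound} (you spell out the Lipschitz dependence of $\dot{x} = f - \tfrac{1}{r}(\dpidx g)g$ on the gradient, which the paper leaves implicit), and both use compactness plus the strict Lyapunov assumption to keep $\norm{\dot{x}^{i+1}}$ bounded away from zero. Where you genuinely diverge is the construction of the frames. The paper builds $V$ and $\hat{V}$ via the SVD of $\hat{W}^T W$ (Lemma \ref{lem:canonical_basis_exists}), obtaining biorthogonal trailing columns whose pairwise inner products are the canonical angles, and then bounds $\norm{I - \hat{V}^T V}$ through the block decomposition of Corollary \ref{cor:canonical_mat_prod_iden_diff_ip_bnd} and the normalization estimate of Corollary \ref{cor:canonical_mat_prod_iden_diff_v_bnd}, arriving at $\norm{I-\hat{V}^TV} \le 4\norm{v-\hat{v}}/(\norm{v}-\norm{v-\hat{v}})$. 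You instead take an arbitrary $V$ with first column $a = \dot{x}/\norm{\dot{x}}$ and set $\hat{V} = RV$ for the planar rotation $R$ carrying $a$ to $b = \dot{x}^{i+1}/\norm{\dot{x}^{i+1}}$; the similarity $\hat{V}^T V - I = V^T(R^T-I)V$ together with normality of $R - I$ gives $\norm{\hat{V}^TV - I}_2 = \norm{a-b}$ exactly, which is both sharper and more elementary than the paper's chain of corollaries. The only thing the paper's route buys in exchange is the explicit biorthogonality and singular-value structure of the trailing columns, but since Lemma \ref{lem:approx_prtls_chng_crds_rltn} makes the downstream quantities invariant under the choice of trailing columns, that structure is not needed, and your construction would serve equally well where the paper reuses this machinery (e.g.\ Corollary \ref{cor:lip_cnvtn_mat_id_diff_bnd}). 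One minor point: your rotation is well defined only when $a \ne -b$, but the established estimate $\norm{a-b} = \mathcal{O}(h^{d+1})$ rules out the antipodal case for $h$ small, so this is harmless.
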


We construct orthogonal matrices $V$ and $\hat{V}$ associated with the changes of variables for the exact and computed optimal cost such that $\hat{V}^T$ and $V$ are almost inverses of each other.
This is the content of Lemma \ref{lem:canonical_basis_exists}, which is mostly a restatement of \cite[pg.~73]{stewart:mat_alg} in terms relevant to our problem.
\begin{lemma}
\label{lem:canonical_basis_exists}
Suppose $v$ and $\hat{v}$ are nonzero vectors in $\mathbb{R}^n$ such that $v \cdot\hat{v} > 0$, then there exist orthonormal bases $\{ V^1, \ldots , V^n \}$ and $\{\hat{V}^1 , \ldots , \hat{V}^n \}$ with the properties
\begin{enumerate}
\item
$V^1 = v/\norm{v}$ and $\hat{V}^1 = \hat{v}/\norm{\hat{v}}$
\item
$
\sum_{j=2}^n \bigpars{V^1 \cdot \hat{V}^j}^2 
= 
\sum_{j=2}^n \bigpars{\hat{V}^1 \cdot V^j}^2
=
1 - \bigpars{V^1 \cdot \hat{V}^1}^2
$

\item
The sets $\{ V^2, \ldots , V^n \}$ and $\{\hat{V}^2 , \ldots , \hat{V}^n \}$ satisfy the orthogonality condition $\bigpars{V^j \cdot \hat{V}^k} = 0$ whenever $j \ne k$ and $V^1 \cdot \hat{V}^1 \le V^j \cdot \hat{V}^j\le 1$ for $2 \le j \le n$
\end{enumerate}

\begin{proof}
The first property from the lemma statement is trivially achieved by setting $V^1$ and $\hat{V}^1$ to the unit vectors in the direction of $v$ and $\hat{v}$.

Let $W$ be any $n \times n-1$ matrix whose columns are orthogonal to $V^1$, and define $\hat{W}$ analogously.
Let $M$ and $\hat{M}$ be the orthogonal matrices
\begin{align*}
M \equiv \begin{bmatrix} V^1 & W \end{bmatrix}
& &
\hat{M} \equiv \begin{bmatrix} \hat{V}^1 & \hat{W} \end{bmatrix}
\end{align*}
then $\hat{M}^T M$ is orthogonal.
The second statement of the lemma is just a restatement of the fact that the first row and column of $\hat{M}^TM$ both have unit two-norm.

The third statement in the lemma statement requires a special choice of bases for the subspaces orthogonal to $V^1$ and $\hat{V}^1$.
Let $\hat{W}^T W$ have the singular value decomposition 
\begin{equation*}
\hat{W}^T W = \hat{U} \Sigma U^T
\end{equation*}
where $\Sigma$ is an $n-1 \times n-1$ diagonal matrix.
If we take the sets $\{ V^j \}_{j=2}^n$ and $\{ \hat{V}^j \}_{j=2}^n$  to be the columns of $WU$ and $\hat{W} \hat{U}$, then $\{V^j\}_{j=2}^n$ and $\{\hat{V}^j\}_{j=2}^n$ are orthonormal bases for the span of the spaces orthogonal to $V^1$ and $\hat{V}^1$.
Furthermore, it follows from the fact that $(\hat{W} \hat{U})^T W U = \Sigma$ that $\{V^j\}_{j=2}^n$ and $\{\hat{V}^j\}_{j=2}^n$ are biorthogonal and $V^j \cdot \hat{V}^j$ is a singular value of $\hat{W}^T W$ when $2 \le j \le n$.

Finally, we must verify that $V^1 \cdot \hat{V}^1 \le V^j \cdot \hat{V}^j \le 1$ for $2 \le j \le n$.
Let $N$ and $\hat{N}$ be the orthogonal matrices
\begin{align*}
N \equiv \begin{bmatrix} V^1 & W U \end{bmatrix}
& &
\hat{N} \equiv \begin{bmatrix} \hat{V}^1 & \hat{W} \hat{U} \end{bmatrix}
\end{align*}
Their product has the form
\begin{equation*}
\hat{N}^T N
=
\begin{bmatrix}
\innerprod{\hat{V}^1}{V^1} & (\hat{V}^1)^T (W U) \\
(\hat{W} \hat{U})^T V^1 & \Sigma
\end{bmatrix}
.
\end{equation*}
Each column of of $\hat{N}^T N$ has unit norm, in particular the last column contains $\sigma_{\text{min}}(\hat{W}^T W)$, the smallest singular value of $\hat{W}^T W$, and thus satisfies
\begin{align*}
1 
&\le
\sigma_{\text{min}}(\hat{W}^T W)^2 + \norm{(\hat{V}^1)^T W U}^2 \\
&=
\min_j \bigpars{V^j \cdot \hat{V}^j}^2 + \norm{(\hat{V}^1)^T W U}^2 \\
&\le
\min_j \bigpars{V^j \cdot \hat{V}_j}^2 + 1 - \bigpars{V^1 \cdot \hat{V}^1}^2
.
\end{align*}
It then follows from the final inequality and the assumption that $\bigpars{V^1 \cdot \hat{V}^1} > 0$ that $V^j \cdot \hat{V}^j \ge V^1 \cdot \hat{V}^1$ for $2 \le j \le n$.
\end{proof}
\end{lemma}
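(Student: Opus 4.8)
The plan is to recast the three requirements as statements about a single orthogonal matrix built from orthonormal completions of $v/\norm{v}$ and $\hat v/\norm{\hat v}$, and then to force the biorthogonality in property 3 by a suitable rotation inside the orthogonal complements of these two vectors. First I would set $V^1 = v/\norm{v}$ and $\hat V^1 = \hat v/\norm{\hat v}$, which gives property 1 immediately, and pick any matrices $W,\hat W \in \mathbb{R}^{n\times(n-1)}$ whose columns form orthonormal bases of the orthogonal complements of $V^1$ and $\hat V^1$ respectively. Then $M = [\,V^1\ \ W\,]$ and $\hat M = [\,\hat V^1\ \ \hat W\,]$ are orthogonal, so $\hat M^T M$ is orthogonal as well.

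Property 2 then comes for free and requires no special choice of $W,\hat W$: since each row and each column of the orthogonal matrix $\hat M^T M$ has unit Euclidean norm, removing the $(1,1)$ entry $V^1\cdot\hat V^1$ from the first column and from the first row leaves $\sum_{j\ge2}(\hat V^1\cdot V^j)^2 = \sum_{j\ge2}(V^1\cdot\hat V^j)^2 = 1-(V^1\cdot\hat V^1)^2$.

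The orthogonality half of property 3 is where a special choice of bases for the two complements enters. I would compute the singular value decomposition $\hat W^T W = \hat U\Sigma U^T$ of the $(n-1)\times(n-1)$ block and replace $W,\hat W$ by $WU,\hat W\hat U$, which are again orthonormal bases of the same complements. Taking $\{V^j\}_{j\ge2}$ and $\{\hat V^j\}_{j\ge2}$ to be their columns makes $(\hat W\hat U)^T(WU)=\Sigma$ diagonal, so $V^j\cdot\hat V^k=0$ for $j\ne k$ and $V^j\cdot\hat V^j$ equals a singular value of $\hat W^T W$. Since $\hat W^T W$ is the lower-right block of the orthogonal matrix $\hat M^T M$, its singular values are at most $1$, giving the upper bound in property 3.

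The main obstacle is the lower bound $V^1\cdot\hat V^1 \le V^j\cdot\hat V^j$, i.e.\ that no singular value of $\hat W^T W$ falls below the cosine $V^1\cdot\hat V^1$ of the angle between the two given vectors. Here I would form $N=[\,V^1\ \ WU\,]$ and $\hat N=[\,\hat V^1\ \ \hat W\hat U\,]$, whose product $\hat N^T N$ is orthogonal with $\Sigma$ in its lower-right block. Applying the unit-norm condition to the column of $\hat N^T N$ carrying $\sigma_{\min}(\hat W^T W)$ gives $\sigma_{\min}^2+(\hat V^1\cdot V^n)^2=1$, and bounding the single cross term by the full first-row norm $\norm{(\hat V^1)^T WU}^2 = 1-(V^1\cdot\hat V^1)^2$ from property 2 yields $1\le\sigma_{\min}^2+1-(V^1\cdot\hat V^1)^2$. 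This forces $(V^1\cdot\hat V^1)^2\le\min_j(V^j\cdot\hat V^j)^2$, which is the claimed bound once we invoke the hypothesis $V^1\cdot\hat V^1>0$. This step is essentially the CS-decomposition fact recorded in \cite{stewart:mat_alg}; the only delicate bookkeeping is keeping straight which basis vector lives in which complement.
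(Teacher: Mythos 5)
Your proposal is correct and follows essentially the same route as the paper's proof: orthonormal completions of $V^1$ and $\hat{V}^1$, property 2 from the unit-norm rows and columns of the orthogonal matrix $\hat{M}^T M$, the SVD of $\hat{W}^T W$ to enforce biorthogonality, and the unit-norm column of $\hat{N}^T N$ containing $\sigma_{\min}$ combined with the first-row bound to get the lower estimate $V^j \cdot \hat{V}^j \ge V^1 \cdot \hat{V}^1$. No gaps.
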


Corollaries \ref{cor:canonical_mat_prod_iden_diff_ip_bnd} and \ref{cor:canonical_mat_prod_iden_diff_v_bnd} establish a bound on $\norm{\hat{V}^T V - I}$ in terms of the difference between the true and computed optimal directions at the current patch point.
\begin{corollary}
\label{cor:canonical_mat_prod_iden_diff_ip_bnd}
Let $V$ and $\hat{V}$ denote the orthogonal matrices whose columns are $\{ V^j \}_{j=1}^n$ and $\{\hat{V}^j\}_{j=1}^n$ from Lemma \ref{lem:canonical_basis_exists}, then 
\begin{equation*}
\norm{I - \hat{V}^T V }_2 \le 2 \sqrt{2} \sqrt{1 - V_1 \cdot \hat{V}_1} 
\end{equation*}

\begin{proof}
Partition the matrices $V$ and $\hat{V}$ as
\begin{align*}
V = \begin{bmatrix} V_1 & W \end{bmatrix}
& &
\hat{V} = \begin{bmatrix} \hat{V}_1 & \hat{W} \end{bmatrix}
.
\end{align*}
$W$ and $\hat{W}$ were constructed so that their columns have unit norm and satisfy $W^j \cdot \hat{W}^k = 0$ whenever $j \ne k$.
Let $\Sigma = \hat{W}^T W$, where $\Sigma$ is diagonal and its diagonal entries are $\hat{W}_j \cdot W_j$.
$I - \hat{V}V$ can be expressed as
\begin{equation}
\label{eqn:VhatTV_decomp}
I - \hat{V}^T V
=
\begin{bmatrix}
1 - V_1 \cdot \hat{V}_1 & 0 \\
0 & I - \Sigma
\end{bmatrix}
+
\begin{bmatrix}
0 & \hat{V}_1^T W \\
\hat{W}^T V_1 & 0
\end{bmatrix}
.
\end{equation}
By Lemma \ref{lem:canonical_basis_exists}, $\Sigma_{j,j} = \hat{W}_j \cdot W_j \ge \hat{V}_1 \cdot V_1$, so the norm of the diagonal matrix appearing in the right hand side of \eqref{eqn:VhatTV_decomp} is bounded by
\begin{equation}
\label{eqn:VhatTV_decomp_bound1}
\left\|
\begin{bmatrix}
1 - V_1 \cdot \hat{V}_1 & 0 \\
0 & I - \Sigma
\end{bmatrix}
\right\|_2
\le
1 - \hat{V}_1 \cdot V_1
\end{equation}
Let $x$ be a unit column vector partitioned as $x = [ x_1 \; x_2 ]^T$, then the following bound on the norm of the second matrix in \eqref{eqn:VhatTV_decomp} follows from the fact that $\norm{\hat{V}_1^T W}^2 = \norm{\hat{W}^T V_1}^2 = 1 - \hat{V}_1 \cdot V_1^2$
\begin{align}
\label{eqn:VhatTV_decomp_bound2}
\begin{split}
\left\|
\begin{bmatrix}
0 & \hat{V}_1^T W \\
\hat{W}^T V_1 & 0
\end{bmatrix}
\begin{bmatrix}
x_1 \\ x_2
\end{bmatrix}
\right\|_2^2
&=
\norm{\hat{V}_1^T W x_2}^2 + \norm{\hat{W}^T V_1 x_1}^2 \\
&\le
\norm{\hat{V}_1^T W}^2 \norm{x_2}^2 + \norm{\hat{W}^T V_1}^2(1-\norm{x_2}^2) \\
&\le
2\bigpars{1 - \hat{V}_1 \cdot V_1}
\end{split}
\end{align}
The conclusion of the lemma follows from \eqref{eqn:VhatTV_decomp_bound1} and \eqref{eqn:VhatTV_decomp_bound2}.
\end{proof}
\end{corollary}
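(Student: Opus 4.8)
The plan is to exploit the special biorthogonal structure of $V$ and $\hat{V}$ provided by Lemma~\ref{lem:canonical_basis_exists} to put $\hat{V}^T V$ into an almost block-diagonal form, and then estimate $\norm{I - \hat{V}^T V}_2$ by splitting $I - \hat{V}^T V$ into a block-diagonal piece and an off-diagonal piece and bounding each separately.

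First I would record that the $(j,k)$ entry of $\hat{V}^T V$ is $\hat{V}^j \cdot V^k$. The third property of Lemma~\ref{lem:canonical_basis_exists} forces $\hat{V}^j \cdot V^k = 0$ for $j \ne k$ with $j,k \ge 2$, so after partitioning $V = \begin{bmatrix} V_1 & W \end{bmatrix}$ and $\hat{V} = \begin{bmatrix} \hat{V}_1 & \hat{W} \end{bmatrix}$ the matrix takes the form
\begin{equation*}
\hat{V}^T V
=
\begin{bmatrix}
V_1 \cdot \hat{V}_1 & \hat{V}_1^T W \\
\hat{W}^T V_1 & \Sigma
\end{bmatrix},
\end{equation*}
where $\Sigma$ is the diagonal matrix of inner products $\hat{V}^j \cdot V^j$. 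Consequently $I - \hat{V}^T V$ is the sum of the block-diagonal matrix with diagonal blocks $1 - V_1 \cdot \hat{V}_1$ and $I - \Sigma$, and the off-diagonal matrix whose only nonzero blocks are $-\hat{V}_1^T W$ and $-\hat{W}^T V_1$.

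Next I would bound the two pieces and combine them with the triangle inequality. For the block-diagonal piece, the third property of Lemma~\ref{lem:canonical_basis_exists} also gives $V_1 \cdot \hat{V}_1 \le \Sigma_{jj} \le 1$, so each diagonal entry $1 - \Sigma_{jj}$ lies in $[0,\, 1 - V_1 \cdot \hat{V}_1]$ and the norm of this piece is at most $1 - V_1 \cdot \hat{V}_1$. For the off-diagonal piece I would invoke the second property of Lemma~\ref{lem:canonical_basis_exists}, which yields $\norm{\hat{V}_1^T W}^2 = \norm{\hat{W}^T V_1}^2 = 1 - (V_1 \cdot \hat{V}_1)^2$; applying this matrix to a unit vector partitioned conformally and using Cauchy--Schwarz shows its squared norm is at most $\norm{\hat{V}_1^T W}^2 \norm{x_2}^2 + \norm{\hat{W}^T V_1}^2 \norm{x_1}^2 = 1 - (V_1 \cdot \hat{V}_1)^2$. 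Since $1 - (V_1 \cdot \hat{V}_1)^2 = (1 - V_1 \cdot \hat{V}_1)(1 + V_1 \cdot \hat{V}_1) \le 2(1 - V_1 \cdot \hat{V}_1)$, the off-diagonal piece has norm at most $\sqrt{2}\sqrt{1 - V_1 \cdot \hat{V}_1}$. Adding the two bounds and using $1 - V_1 \cdot \hat{V}_1 \le \sqrt{2}\sqrt{1 - V_1 \cdot \hat{V}_1}$ (valid because $\sqrt{1 - V_1 \cdot \hat{V}_1} \le \sqrt{2}$) gives the claimed bound $2\sqrt{2}\sqrt{1 - V_1 \cdot \hat{V}_1}$.

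The step I would be most careful with is the off-diagonal estimate: the key point is that the two cross terms carry the weights $\norm{x_2}^2$ and $\norm{x_1}^2$, which sum to one, so they combine into a single factor $1 - (V_1 \cdot \hat{V}_1)^2$ rather than twice that, after which the elementary factorization $1 - a^2 \le 2(1-a)$ finishes it. Everything else is bookkeeping that transcribes the structure already established in Lemma~\ref{lem:canonical_basis_exists} into an operator-norm estimate, so I do not anticipate any genuine difficulty there; the content of the corollary is really just that translation.
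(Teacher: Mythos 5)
Your proposal is correct and follows essentially the same route as the paper: the same partition of $V$ and $\hat{V}$, the same splitting of $I - \hat{V}^T V$ into a diagonal piece bounded by $1 - V_1 \cdot \hat{V}_1$ and an off-diagonal piece bounded by $\sqrt{2}\sqrt{1 - V_1 \cdot \hat{V}_1}$ via the identity $\norm{\hat{V}_1^T W}^2 = \norm{\hat{W}^T V_1}^2 = 1 - (V_1 \cdot \hat{V}_1)^2$ and the factorization $1 - a^2 \le 2(1-a)$. The only cosmetic difference is that you write the convex combination with weights $\norm{x_1}^2$ and $\norm{x_2}^2$ directly where the paper uses $1 - \norm{x_2}^2$, and you make explicit the final step $1 - V_1\cdot\hat{V}_1 \le \sqrt{2}\sqrt{1 - V_1\cdot\hat{V}_1}$ that the paper leaves implicit.
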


\begin{corollary}
\label{cor:canonical_mat_prod_iden_diff_v_bnd}
Suppose $v$ is a nonzero vector, and $\norm{v - \hat{v}}_2 < \norm{v}_2$, then there exist orthogonal matrices $V$ and $\hat{V}$ such that their first columns are
\begin{align*}
V^1 &= \frac{v}{\norm{v}} & \hat{V}^1 &= \frac{\hat{v}}{\norm{\hat{v}}}
\end{align*}
and
\begin{equation*}
\norm{I - \hat{V}^T V} \le 4 \frac{\norm{v - \hat{v}}}{\norm{v} - \norm{v - \hat{v}}}
\end{equation*}

\begin{proof}
The corollary follows from Corollary \ref{cor:canonical_mat_prod_iden_diff_ip_bnd} and the fact that
\begin{equation*}
\Bignorm{\frac{v}{\norm{v}} - \frac{\hat{v}}{\norm{\hat{v}}}}
\le
2 \frac{\norm{v - \hat{v} }}{\norm{v} - \norm{v - \hat{v}}}
\text{ whenever }
\norm{v - \hat{v}} < \norm{v}.
\end{equation*}
\end{proof}
\end{corollary}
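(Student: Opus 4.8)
The plan is to reduce everything to the inner-product bound already established in Corollary \ref{cor:canonical_mat_prod_iden_diff_ip_bnd} and then translate that bound, which is phrased in terms of $V_1 \cdot \hat{V}_1$, into the vector-difference form demanded by the statement. The whole argument is a chain of elementary inequalities, so the work is in organizing it rather than in any single hard estimate.

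First I would verify that the construction underlying Corollary \ref{cor:canonical_mat_prod_iden_diff_ip_bnd} is actually available, i.e. that the hypothesis $v \cdot \hat{v} > 0$ of Lemma \ref{lem:canonical_basis_exists} holds. Expanding $\norm{v - \hat{v}}^2 = \norm{v}^2 - 2\, v \cdot \hat{v} + \norm{\hat{v}}^2$ and using the hypothesis $\norm{v - \hat{v}} < \norm{v}$ gives $v \cdot \hat{v} > \tfrac{1}{2}\norm{\hat{v}}^2 \ge 0$; the strict inequality also forces $\hat{v} \ne 0$, so $v/\norm{v}$ and $\hat{v}/\norm{\hat{v}}$ are well-defined unit vectors. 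With this in place, Lemma \ref{lem:canonical_basis_exists} produces orthogonal matrices $V$ and $\hat{V}$ whose first columns are $V^1 = v/\norm{v}$ and $\hat{V}^1 = \hat{v}/\norm{\hat{v}}$, and Corollary \ref{cor:canonical_mat_prod_iden_diff_ip_bnd} then yields $\norm{I - \hat{V}^T V}_2 \le 2\sqrt{2}\,\sqrt{1 - V_1 \cdot \hat{V}_1}$.

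The next step is the unit-vector identity that converts $1 - V_1 \cdot \hat{V}_1$ into a difference of normalized vectors. Since $V_1$ and $\hat{V}_1$ are unit vectors, $\norm{V_1 - \hat{V}_1}^2 = 2(1 - V_1 \cdot \hat{V}_1)$, whence $2\sqrt{2}\,\sqrt{1 - V_1 \cdot \hat{V}_1} = 2\norm{V_1 - \hat{V}_1} = 2\Bignorm{\frac{v}{\norm{v}} - \frac{\hat{v}}{\norm{\hat{v}}}}$. It therefore remains only to prove the normalization perturbation bound $\Bignorm{\frac{v}{\norm{v}} - \frac{\hat{v}}{\norm{\hat{v}}}} \le 2\frac{\norm{v - \hat{v}}}{\norm{v} - \norm{v - \hat{v}}}$ and then to chain the two estimates, which introduces the announced factor of $4$.

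For the normalization bound I would use the add-and-subtract decomposition $\frac{v}{\norm{v}} - \frac{\hat{v}}{\norm{\hat{v}}} = \frac{v - \hat{v}}{\norm{v}} + \hat{v}\,\bigpars{\frac{1}{\norm{v}} - \frac{1}{\norm{\hat{v}}}}$ together with the reverse triangle inequality $\abs{\norm{v} - \norm{\hat{v}}} \le \norm{v - \hat{v}}$, which yields the clean estimate $\Bignorm{\frac{v}{\norm{v}} - \frac{\hat{v}}{\norm{\hat{v}}}} \le \frac{2\norm{v - \hat{v}}}{\norm{v}}$. This is in fact stronger than the claimed bound, since $\norm{v} \ge \norm{v} - \norm{v - \hat{v}} > 0$ under the hypothesis, so dividing by the smaller positive denominator recovers the stated inequality. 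There is no genuine obstacle anywhere in this argument; the only point that demands a moment's attention is confirming $v \cdot \hat{v} > 0$, which simultaneously legitimizes the basis construction of Lemma \ref{lem:canonical_basis_exists} and keeps the denominator $\norm{v} - \norm{v - \hat{v}}$ strictly positive.
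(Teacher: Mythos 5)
Your proposal is correct and follows essentially the same route as the paper: apply Corollary \ref{cor:canonical_mat_prod_iden_diff_ip_bnd}, convert $2\sqrt{2}\sqrt{1 - V^1\cdot\hat{V}^1}$ into $2\norm{V^1 - \hat{V}^1}$ via the unit-vector identity, and invoke the normalization perturbation bound to pick up the factor of $4$. You merely supply the details the paper leaves implicit, namely the verification that $v\cdot\hat{v}>0$ (so Lemma \ref{lem:canonical_basis_exists} applies) and the add-and-subtract proof of the bound $\bigl\lVert v/\norm{v}-\hat{v}/\norm{\hat{v}}\bigr\rVert \le 2\norm{v-\hat{v}}/(\norm{v}-\norm{v-\hat{v}})$, which the paper simply asserts as a fact.
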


The relevant interpretation of Lemma \ref{lem:kron_prd_id_diff} for the purpose of proving Theorem \ref{thm:opt_cost_local_trunc_err_higer_order} is that $\hat{V}^T V$ and the repeated Kronecker product $\hat{V}^T V$ of the orthogonal matrices from the convenient changes of coordinates are both perturbations of the identity of the same order.
This fact is useful for bounding the difference between partial derivatives of the same function under the two changes of coordinates.
\begin{lemma} \label{lem:kron_prd_id_diff}
If $A$ and $B$ are matrices such that $AB$ is in $\mathbb{R}^{n \times n}$ and $\norm{AB-I_n}_2 \le 1$, then
\begin{equation*}
\norm{\bigpars{AB \otimes \cdots \otimes AB} - I_{n^k}}_2 \le (2^k - 1) \norm{AB - I_n}_2
\end{equation*}
where the Kronecker product on the left hand side of the inequality is repeated $k$ times, and $I_n$ and $I_{n^k}$ denote the identity matrices in $\mathbb{R}^{n \times n}$ and $\mathbb{R}^{n^p \times n^p}$.

\begin{proof}
The proof of the lemma rests on the fact that for any two matrices $C$ and $D$, $\norm{C \otimes D}_2 = \norm{C}_2 \norm{D}_2$.
A standard induction argument establishes the conclusion of the lemma.
\end{proof}
\end{lemma}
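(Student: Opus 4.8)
The plan is to induct on the number $k$ of Kronecker factors, leaning entirely on the multiplicativity $\norm{C \otimes D}_2 = \norm{C}_2 \norm{D}_2$ together with a single telescoping split. Write $M \equiv AB$ and $\epsilon \equiv \norm{M - I_n}_2$, so that the hypothesis reads $\epsilon \le 1$. The first fact I would record is the crude bound $\norm{M}_2 \le \norm{I_n}_2 + \norm{M - I_n}_2 = 1 + \epsilon \le 2$, where the final inequality is precisely the place the hypothesis $\norm{AB - I_n}_2 \le 1$ enters. This factor of $2$ is exactly what produces the $2^k$ growth in the claimed constant, so it is worth isolating at the outset.

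The base case $k=1$ is just the tautology $\norm{M - I_n}_2 = \epsilon = (2^1 - 1)\epsilon$. For the inductive step I would assume $\norm{M^{\otimes k} - I_{n^k}}_2 \le (2^k - 1)\epsilon$, where $M^{\otimes k}$ abbreviates the $k$-fold Kronecker product, and write the $(k+1)$-fold product as $M \otimes M^{\otimes k}$ with $I_{n^{k+1}} = I_n \otimes I_{n^k}$. Inserting the intermediate term $M \otimes I_{n^k}$ and using the bilinearity of the Kronecker product yields the telescoping identity
\[
M \otimes M^{\otimes k} - I_n \otimes I_{n^k}
=
M \otimes \bigpars{M^{\otimes k} - I_{n^k}} + \bigpars{M - I_n} \otimes I_{n^k}.
\]
Applying the triangle inequality and the multiplicativity of the spectral norm under $\otimes$ then gives
\[
\norm{M^{\otimes (k+1)} - I_{n^{k+1}}}_2
\le
\norm{M}_2 \, \norm{M^{\otimes k} - I_{n^k}}_2 + \norm{M - I_n}_2 \, \norm{I_{n^k}}_2.
\]

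Finally I would substitute $\norm{M}_2 \le 2$, the inductive hypothesis $\norm{M^{\otimes k} - I_{n^k}}_2 \le (2^k - 1)\epsilon$, and $\norm{I_{n^k}}_2 = 1$ to obtain $2(2^k - 1)\epsilon + \epsilon = (2^{k+1} - 1)\epsilon$, closing the induction. There is no genuine obstacle in this argument; the only points requiring attention are choosing the telescoping split so that the factor $\norm{M}_2 \le 2$ multiplies the inductive term while the cheap term $\bigpars{M - I_n} \otimes I_{n^k}$ contributes only $\epsilon$, and verifying that the resulting recursion $c_{k+1} = 2 c_k + 1$ with $c_1 = 1$ indeed has the closed form $c_k = 2^k - 1$.
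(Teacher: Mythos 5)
Your proof is correct and follows exactly the route the paper indicates: an induction on $k$ driven by the multiplicativity $\norm{C \otimes D}_2 = \norm{C}_2\norm{D}_2$, with the hypothesis $\norm{AB - I_n}_2 \le 1$ entering only through $\norm{AB}_2 \le 2$. The paper leaves the "standard induction argument" unwritten, and your telescoping split is the natural way to fill it in.
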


We now turn back to the task of bounding the difference between an exact and computed partial derivative of the optimal cost with respect to the original state space variables at the current patch point.
This difference is given by the formula
\begin{multline}
\label{eqn:rcvrd_opt_cost_prtl_deriv_diff}
\pderivelip{\pi}{k}{x_{j_1}}{}{x_{j_k}}{}(x^{i+1})
-
\pderivelip{\pi^{i+1}}{p}{x_{j_1}}{}{x_{j_k}}{}(x^{i+1})
=\\
\Biggbracks{
\kronderivelip{\xi}{\xi} [\tilde{\pi}(\xi)]_{\xi=0}
-
\kronderivelip{\hat{\xi}}{\hat{\xi}} [\hat{\pi}^{i+1}(\hat{\xi})]_{\hat{\xi}=0}
\\
-
\kronderivelip{\xi}{\xi} [\tilde{\pi}(\xi)]_{\xi=0}
\bigpars{ \kronprodelip{V^T \hat{V}}{V^T \hat{V}} - I }
}
\\
\kronprodelip{\hat{V}^T e_{j_1}}{\hat{V}^T e_{j_k}}
.
\end{multline}
There are a few things to note about the right hand side of \eqref{eqn:rcvrd_opt_cost_prtl_deriv_diff}.
The first is that it is the product of a row vector on the left and column vector on the right, where the column vector $\kronprodelip{\hat{V}^T e_{j_1}}{\hat{V}^T e_{j_k}}$ has unit norm since it is the column of an orthogonal matrix.
It follows from the Cauchy-Schwarz inequality that the the right hand side of \eqref{eqn:rcvrd_opt_cost_prtl_deriv_diff} is bounded by the norm of the row vector.
This row vector has two terms, the first is the difference between the exact and computed partial derivatives of the optimal cost with respect to the changes of variables.
The second is involves a vector composed of partial derivatives of the exact solution times a small perturbation of the identity.
It follows from assumption that the optimal cost is smooth on the compact computational domain and by Lemma \ref{lem:cnvnt_basis_id_pert_bnd} that there exist $T' < \infty$ such that, whenever the current patch point is within a radius of the maximum consecutive patch point distance of the previous patch point, this term is bounded by
\begin{multline}
\label{eqn:higher_order_trunc_err_term1_bnd}
\Bignorm{
\kronderivelip{\xi}{\xi} [ \tilde{\pi}(\xi) ]_{\xi=0}
\bigpars{ \kronprodelip{V^T \hat{V}}{V^T \hat{V}} - I}
}
\\
\begin{aligned}
&\le
\bigpars{2^k-1} 
\sup_{\closedcompdomwoalb} \Bignorm{\kronderivelip{x}{x}[\pi(x)]} \norm{V^T \hat{V} - I} \\
&\le
T' \norm{x^{i+1} - x^i}^{d+1}
.
\end{aligned} 
\end{multline}
This bound is independent of $p$, the order of the partial derivative, when $p \ge 2$.
The remaining work is in establishing the existence of some $T'' < \infty$, such that under the convenient changes of variables
\begin{multline}
\label{eqn:rcvrd_opt_cost_prtl_deriv_diff_just_prtls_bnd}
\Bignorm{
\kronderivelip{\xi}{\xi} [\tilde{\pi}(\xi)]_{\xi=0}
-
\kronderivelip{\hat{\xi}}{\hat{\xi}} [\hat{\pi}^{i+1}(\hat{\xi})]_{\hat{\xi}=0}
}_2
\\
\le
T'' \norm{x^{i+1}-x^i}^{d+2-k}
\end{multline}
whenever the distance between the current and previous patch point is less than the maximum consecutive patch point distance.
We will bound the left hand side of \eqref{eqn:rcvrd_opt_cost_prtl_deriv_diff_just_prtls_bnd} by bounding each of its entries by considering two cases, depending on whether the entry is the difference of exact and computed characteristic or non characteristic partial derivatives of the optimal cost.

With this approach in mind, we will need a bound on the partial derivatives of the problem data with respect to two changes of variables.
This is the content of Lemma \ref{lem:prtl_drv_chng_crds_diff}.
\begin{lemma}
\label{lem:prtl_drv_chng_crds_diff}
Suppose all partial derivatives of $s : \mathbb{R}^n \rightarrow \mathbb{R}$ up to order $k$ exist and are continuous in a neighborhood of $\bar{x}$ in $\mathbb{R}^n$.
If $\tilde{V}$ and $\hat{V}$ are orthogonal matrices and $\tilde{s}$ and $\hat{s}$ are defined as
\begin{align*}
\tilde{s}(\tilde{\xi}) \equiv s(\bar{x} + \tilde{V} \tilde{\xi} )
& &
\hat{s}(\hat{\xi}) \equiv s(\bar{x} + \hat{V} \hat{\xi})
\end{align*}
then
\begin{multline*}
\Bigabs{
\pderivelip{\tilde{s}} {k} {\tilde{\xi}_{j_1}} {} {\tilde{\xi}_{j_k}} {}(0)
-
\pderivelip{\hat{s}} {k} {\hat{\xi}_{j_1}} {} {\hat{\xi}_{j_k}} {}(0)
}
\le \\
\bigpars{2^k-1} \Bignorm{\kronderivelip{x}{x}[s(x)]_{x=\bar{x}}}_2 \norm{\hat{V}^T \tilde{V} - I}_2
\end{multline*}

If the components of $G : \mathbb{R}^n \rightarrow \mathbb{R}^n$ all have partial derivatives up to order $p$ in a neighborhood of $\bar{x}$ in $\mathbb{R}^n$, and $\tilde{G}$ and $\hat{G}$ are defined as
\begin{align*}
\tilde{G}(\tilde{\xi}) \equiv \tilde{V}^T G(\bar{x} + \tilde{V} \tilde{\xi} )
& &
\hat{G}(\hat{\xi}) \equiv \hat{V}^T G(\bar{x} + \hat{V} \hat{\xi})
\end{align*}
then
\begin{multline*}
\Bignorm{
\pderivelip{\tilde{G}} {k} {\tilde{\xi}_{j_1}} {} {\tilde{\xi}_{j_k}} {}(0)
-
\pderivelip{\hat{G}} {k} {\hat{\xi}_{j_1}} {} {\hat{\xi}_{j_k}} {}(0)
}
\le \\
2^k n^{1/2} 
\max_{\ell} \Bignorm{\kronderivelip{x}{x}[G_{\ell}(x)]_{x=\bar{x}}}_2 \norm{\hat{V}^T \tilde{V} - I}_2
.
\end{multline*}

\begin{proof}
A $\ith{k}$ order partial derivative of $\tilde{s}$ can be expressed in Kronecker notation as
\begin{equation*}
\pderivelip{\tilde{s}}{k}{\tilde{\xi}_{j_1}}{}{\tilde{\xi}_{j_k}}{}(0)
= 
\kronderivelip{x}{x} [s(x)]_{x=\bar{x}} 
(\kronprodelip{\tilde{V}}{\tilde{V}})
(\kronprodelip{e_{j_1}} {e_{j_k}})
\end{equation*}
so after a sequence of algebraic manipulations, the difference between the partial derivatives of $\tilde{s}$ and $\hat{s}$ is
\begin{multline*}
\Bigabs{
\pderivelip{\tilde{s}}{k}{\tilde{\xi}_{j_1}}{}{\tilde{\xi}_{j_k}}{}(0)
-
\pderivelip{\hat{s}}{k}{\hat{\xi}_{j_1}}{}{\hat{\xi}_{j_k}}{}(0)
}
\\
\begin{aligned}
&=\Bigabs{
\kronderivelip{x}{x}[s]_{x=\bar{x}}
\bigpars{\kronprodelip{V \hat{V}^T}{V \hat{V}^T} - I}
\bigpars{ \kronprodelip{\hat{V}^{j_1}}{\hat{V}^{j_k}} }
}
\\
&\le
\bigpars{2^k-1} \Bignorm{\kronderivelip{x}{x} [s(x)]_{x=\bar{x}}}_2  \norm{\tilde{V}\hat{V}^T-I}_2
.
\end{aligned}
\end{multline*}
The last inequality follows from the Cauchy-Schwarz inequality, Lemma \ref{lem:kron_prd_id_diff}, and the facts that $\kronprodelip{\hat{V}^{j_1}}{\hat{V}^{j_k}}$ has unit norm and $\norm{\tilde{V}\hat{V}^T-I}_2 = \norm{\hat{V}^T\tilde{V}-I}_2$.

The difference between the partial derivatives of $G$ under the two changes of variables is bounded by
\begin{multline*}
\Bignorm{
\pderivelip{}{k}{\tilde{\xi}_{j_1}}{}{\tilde{\xi}_{j_k}}{}[G(\bar{x} + \tilde{V} \tilde{\xi})]_{\tilde{\xi}=0}
-
\pderivelip{}{k}{\hat{\xi}_{j_1}}{}{\hat{\xi}_{j_k}}{}[G(\bar{x} + \hat{V} \hat{\xi})]_{\hat{\xi}=0}
}_2
\\
\le
\bigpars{2^k-1} n^{1/2} \max_{\ell} \Bignorm{\kronderivelip{x}{x} [G_{\ell}(x)]_{x=\bar{x}}}_2  \norm{\tilde{V}\hat{V}^T-I}_2
.
\end{multline*}
This bound is derived by using the result of the first half of the lemma to bound the infinity norm of the difference between the partial derivatives.
The second result of the lemma follows from the previous bound and the identity
\begin{equation*}
\tilde{G}(\tilde{\xi}) - \hat{G}(\hat{\xi})
=
\hat{V}^T 
\bigbracks{
G(\bar{x} + \tilde{V} \tilde{\xi}) - G(\bar{x} + \hat{V} \hat{\xi})
+ (\hat{V} \tilde{V}^T - I) G(\bar{x} + \tilde{V} \tilde{\xi})
}
.
\end{equation*}
\end{proof}
\end{lemma}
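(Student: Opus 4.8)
The plan is to reduce everything to the chain rule written in the Kronecker notation of \S\ref{sec:hghr_ordr_prtls_off_alb}, so that both partial derivatives appear as one and the same fixed row vector of state-space derivatives of $s$ (resp.\ $G_\ell$) multiplied into a Kronecker product of columns of $\tilde{V}$ or $\hat{V}$. First I would record the chain-rule identity that, for $\tilde{s}(\tilde{\xi}) = s(\bar{x} + \tilde{V}\tilde{\xi})$,
\[
\pderivelip{\tilde{s}}{k}{\tilde{\xi}_{j_1}}{}{\tilde{\xi}_{j_k}}{}(0)
=
\kronderivelip{x}{x}[s(x)]_{x=\bar{x}}
\bigpars{\kronprodelip{\tilde{V}}{\tilde{V}}}
\bigpars{\kronprodelip{e_{j_1}}{e_{j_k}}},
\]
with the analogous formula for $\hat{s}$ in terms of $\hat{V}$. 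Since $\hat{V}^T\hat{V}=I$ gives $\tilde{V}^{j_m} = \tilde{V}\hat{V}^T\hat{V}^{j_m}$, the column $\kronprodelip{\tilde{V}^{j_1}}{\tilde{V}^{j_k}}$ equals $\bigpars{\kronprodelip{\tilde{V}\hat{V}^T}{\tilde{V}\hat{V}^T}}\bigpars{\kronprodelip{\hat{V}^{j_1}}{\hat{V}^{j_k}}}$, so subtracting the two chain-rule formulas factors the difference as the fixed Kronecker-derivative row vector, times $\bigpars{\kronprodelip{\tilde{V}\hat{V}^T}{\tilde{V}\hat{V}^T} - I}$, times the unit-norm column $\kronprodelip{\hat{V}^{j_1}}{\hat{V}^{j_k}}$.

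I would then finish the scalar half by Cauchy--Schwarz: the difference is at most $\Bignorm{\kronderivelip{x}{x}[s(x)]_{x=\bar{x}}}_2$ times $\norm{\kronprodelip{\tilde{V}\hat{V}^T}{\tilde{V}\hat{V}^T} - I}_2$ times the unit norm of the Kronecker column. Lemma \ref{lem:kron_prd_id_diff}, applied with $A=\tilde{V}$ and $B=\hat{V}^T$, turns the middle factor into $(2^k-1)\norm{\tilde{V}\hat{V}^T - I}_2$, and the elementary identity $\norm{\tilde{V}\hat{V}^T - I}_2 = \norm{\hat{V}^T\tilde{V} - I}_2$ (the spectral norm is invariant under transposition and under orthogonal conjugation) yields the first claimed bound.

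For the vector-valued half I would begin from the algebraic identity
\[
\tilde{G}(\tilde{\xi}) - \hat{G}(\hat{\xi})
=
\hat{V}^T\bigbracks{
G(\bar{x} + \tilde{V}\tilde{\xi}) - G(\bar{x} + \hat{V}\hat{\xi})
+ (\hat{V}\tilde{V}^T - I)G(\bar{x} + \tilde{V}\tilde{\xi})
},
\]
differentiate $k$ times, and evaluate at the origin. The orthogonal prefactor $\hat{V}^T$ drops out of the $2$-norm. The first bracketed term is the componentwise difference of $G$ under the two changes of variables, so applying the scalar half to each coordinate $G_\ell$ bounds every entry by $(2^k-1)\max_\ell\Bignorm{\kronderivelip{x}{x}[G_\ell(x)]_{x=\bar{x}}}_2\norm{\hat{V}^T\tilde{V} - I}_2$; passing from this entrywise bound to the $\ell_2$ norm of an $n$-vector costs a factor $n^{1/2}$. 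The second term is $(\hat{V}\tilde{V}^T - I)$ acting on the vector whose $\ith{\ell}$ entry is $\pderivelip{}{k}{\tilde{\xi}_{j_1}}{}{\tilde{\xi}_{j_k}}{}[G_\ell(\bar{x}+\tilde{V}\tilde{\xi})]_{0}$, each of magnitude at most $\max_\ell\Bignorm{\kronderivelip{x}{x}[G_\ell(x)]_{x=\bar{x}}}_2$, so that vector has $\ell_2$ norm at most $n^{1/2}\max_\ell\Bignorm{\kronderivelip{x}{x}[G_\ell(x)]_{x=\bar{x}}}_2$, while the matrix contributes $\norm{\hat{V}\tilde{V}^T - I}_2 = \norm{\hat{V}^T\tilde{V} - I}_2$. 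Summing the two contributions produces the coefficient $(2^k-1)+1 = 2^k$, giving the stated $2^k n^{1/2}$ factor.

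The main obstacle I anticipate is organizational rather than analytic: getting the chain rule into the clean Kronecker form, and above all choosing the splitting identity for $\tilde{G} - \hat{G}$ so that the mismatch between the prefactors $\tilde{V}^T$ and $\hat{V}^T$ is absorbed into a single $(\hat{V}\tilde{V}^T - I)$ factor controllable by the already-small quantity $\norm{\hat{V}^T\tilde{V} - I}$. I would also need to check the hypothesis $\norm{\tilde{V}\hat{V}^T - I}_2 \le 1$ required to invoke Lemma \ref{lem:kron_prd_id_diff}, which in the intended application is guaranteed by Lemma \ref{lem:cnvnt_basis_id_pert_bnd} once consecutive patch points are close enough.
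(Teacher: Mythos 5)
Your proposal is correct and follows essentially the same route as the paper's own proof: the chain rule in Kronecker notation plus the factorization $\tilde{V}^{j_m}=\tilde{V}\hat{V}^T\hat{V}^{j_m}$, Cauchy--Schwarz with Lemma \ref{lem:kron_prd_id_diff} for the scalar half, and the identity $\tilde{G}-\hat{G}=\hat{V}^T[G(\bar{x}+\tilde{V}\tilde{\xi})-G(\bar{x}+\hat{V}\hat{\xi})+(\hat{V}\tilde{V}^T-I)G(\bar{x}+\tilde{V}\tilde{\xi})]$ for the vector half, with the $(2^k-1)+1=2^k$ bookkeeping made explicit. Your added remark about verifying the hypothesis $\norm{\tilde{V}\hat{V}^T-I}_2\le 1$ before invoking Lemma \ref{lem:kron_prd_id_diff} is a point the paper leaves implicit.
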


The next corollary differs from the previous lemma in that it bounds the difference in partial derivatives of two different functions with respect to two changes of variables.
\begin{corollary}
\label{cor:prtl_drv_chng_crds_diff2}
Suppose all partial derivatives of $s,\bar{s} : \mathbb{R}^n \rightarrow \mathbb{R}$ up to order $k$ exist and are continuous in a neighborhood of $\bar{x}$ in $\mathbb{R}^n$.
If $\tilde{V}$ and $\hat{V}$ are orthogonal matrices and $\tilde{s}$ and $\hat{s}$ are defined as
\begin{align*}
\tilde{s}(\tilde{\xi}) \equiv s(\bar{x} + \tilde{V} \tilde{\xi} )
& &
\hat{s}(\hat{\xi}) \equiv \bar{s}(\bar{x} + \hat{V} \hat{\xi})
\end{align*}
then
\begin{multline*}
\Bigabs{
\pderivelip{\tilde{s}} {k} {\tilde{\xi}_{j_1}} {} {\tilde{\xi}_{j_k}} {}(0)
-
\pderivelip{\hat{s}} {k} {\hat{\xi}_{j_1}} {} {\hat{\xi}_{j_k}} {}(0)
}
\le \\
\Bignorm{\kronderivelip{x}{x}[s(x) - \bar{s}(x)]_{x=\bar{x}}}_2  \\
+ \bigpars{2^k-1} \Bignorm{\kronderivelip{x}{x}[s(x)]_{x=\bar{x}}}_2 \norm{\hat{V}^T \tilde{V} - I}_2
\end{multline*}

\begin{proof}
\begin{multline*}
\Bigabs{
\pderivelip{\tilde{s}} {k} {\tilde{\xi}_{j_1}} {} {\tilde{\xi}_{j_k}} {}(0)
-
\pderivelip{\hat{s}} {k} {\hat{\xi}_{j_1}} {} {\hat{\xi}_{j_k}} {}(0)
}
\\
\begin{aligned}
&=
\Bigabs{
\Bigpars{
\kronderivelip{x}{x} [s(x)-\bar{s}(x)]_{x=\bar{x}}} \bigpars{\kronprodelip{\hat{V}^{j_1}}{\hat{V}^{j_k}}}  \\
&\phantom{\le \bigl\|}
+
\kronderivelip{x}{x} [s(x)]_{x=\bar{x}}
\Bigpars{\kronprodelip{\hat{V}^T\tilde{V}}{\hat{V}^T\tilde{V}}}
(\kronprodelip{e_{j_1}}{e_{j_k}})
}
\\
&\le
\Bignorm{\kronderivelip{x}{x}[s(x) - \bar{s}(x)]_{x=\bar{x}}}_2  \\
&\phantom{\le \bigl\|}
+ \bigpars{2^k-1} \Bignorm{\kronderivelip{x}{x}[s(x)]_{x=\bar{x}}}_2 \norm{\hat{V}^T \tilde{V} - I}_2
\end{aligned}
\end{multline*}
\end{proof}

\end{corollary}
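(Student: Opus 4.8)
The plan is to follow the proof of Lemma \ref{lem:prtl_drv_chng_crds_diff} almost verbatim; the sole new ingredient is that $s$ and $\bar s$ are different functions, which will contribute one additional term. First I would write each $\ith{k}$ order partial derivative in the Kronecker-derivative notation of \S\ref{sec:hghr_ordr_prtls_off_alb}. Since $\tilde s(\tilde\xi) = s(\bar x + \tilde V \tilde\xi)$, repeated application of the chain rule gives
\begin{equation*}
\pderivelip{\tilde s}{k}{\tilde\xi_{j_1}}{}{\tilde\xi_{j_k}}{}(0)
=
\kronderivelip{x}{x}[s(x)]_{x=\bar x}
\bigpars{\kronprodelip{\tilde V}{\tilde V}}
\bigpars{\kronprodelip{e_{j_1}}{e_{j_k}}},
\end{equation*}
with the analogous formula for $\hat s$ obtained by replacing $s$ by $\bar s$ and $\tilde V$ by $\hat V$.

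I would then add and subtract the intermediate quantity $\kronderivelip{x}{x}[s(x)]_{x=\bar x}\bigpars{\kronprodelip{\hat V}{\hat V}}\bigpars{\kronprodelip{e_{j_1}}{e_{j_k}}}$, splitting the difference into two pieces. The first, $\bigpars{\kronderivelip{x}{x}[s(x)-\bar s(x)]_{x=\bar x}}\bigpars{\kronprodelip{\hat V^{j_1}}{\hat V^{j_k}}}$, isolates the dependence on the difference of the two functions under the single change of variables $\hat V$; by Cauchy-Schwarz it is bounded by $\Bignorm{\kronderivelip{x}{x}[s(x)-\bar s(x)]_{x=\bar x}}_2$, since $\kronprodelip{\hat V^{j_1}}{\hat V^{j_k}}$ is a column of an orthogonal matrix and hence has unit two-norm. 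The second piece, $\kronderivelip{x}{x}[s(x)]_{x=\bar x}\bigpars{\kronprodelip{\tilde V}{\tilde V} - \kronprodelip{\hat V}{\hat V}}\bigpars{\kronprodelip{e_{j_1}}{e_{j_k}}}$, carries the mismatch between the two changes of variables.

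To bound the second piece I would use the mixed-product identity $\kronprodelip{\tilde V}{\tilde V} = \bigpars{\kronprodelip{\hat V}{\hat V}}\bigpars{\kronprodelip{\hat V^T\tilde V}{\hat V^T\tilde V}}$, valid because $\hat V \hat V^T \tilde V = \tilde V$, to rewrite it as $\kronderivelip{x}{x}[s(x)]_{x=\bar x}\bigpars{\kronprodelip{\hat V}{\hat V}}\bigbracks{\bigpars{\kronprodelip{\hat V^T\tilde V}{\hat V^T\tilde V}} - I}\bigpars{\kronprodelip{e_{j_1}}{e_{j_k}}}$. Because $\kronprodelip{\hat V}{\hat V}$ is orthogonal and therefore preserves the two-norm of the row vector to its left, Cauchy-Schwarz together with Lemma \ref{lem:kron_prd_id_diff} bounds this piece by $\bigpars{2^k-1}\Bignorm{\kronderivelip{x}{x}[s(x)]_{x=\bar x}}_2\norm{\hat V^T\tilde V - I}_2$; summing the two bounds yields the stated inequality. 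The only substantive point, and the main obstacle, is verifying that Lemma \ref{lem:kron_prd_id_diff} applies, i.e. that $\norm{\hat V^T\tilde V - I}_2 \le 1$. In the intended application this is guaranteed by Lemma \ref{lem:cnvnt_basis_id_pert_bnd}, which forces $\hat V^T V - I$ to be arbitrarily small once the maximum consecutive patch point distance is chosen small enough; beyond this the argument is purely bookkeeping, and care is only needed to keep the ordering of the Kronecker factors and the placement of the transpose consistent (recalling $\norm{\hat V^T\tilde V - I}_2 = \norm{\tilde V^T\hat V - I}_2$).
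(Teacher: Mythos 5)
Your proposal is correct and follows essentially the same route as the paper: decompose the difference into a $\kronderivelip{x}{x}[s-\bar s]$ term hit by a unit-norm Kronecker column of $\hat V$, plus a term carrying $\kronprodelip{\hat V^T\tilde V}{\hat V^T\tilde V}-I$, then apply Cauchy--Schwarz and Lemma \ref{lem:kron_prd_id_diff}. Your remark that Lemma \ref{lem:kron_prd_id_diff} formally requires $\norm{\hat V^T\tilde V - I}_2 \le 1$ (ensured in the application by taking the consecutive patch point distance small) is a point the paper's own proof glosses over, so you are if anything slightly more careful.
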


We now have all the ingredients necessary to prove Theorem \ref{thm:opt_cost_local_trunc_err_higer_order}.
\begin{proof}[Proof of Theorem \ref{thm:opt_cost_local_trunc_err_higer_order}]
We denote the problem data under the two changes of variables in \eqref{eqn:chngs_of_vars} as
\begin{align}
\label{eqn:prob_data_chng_vars}
\begin{aligned}
\tilde{f}(\xi) &\equiv V^T f(x^{i+1} + V \xi) \\
\tilde{g}(\xi) &\equiv V^T g(x^{i+1} + V \xi) \\
\tilde{q}(\xi) &\equiv q(x^{i+1} + V \xi) \\
\tilde{r}(\xi) &\equiv r(x^{i+1} + V \xi) 
\end{aligned}
& &
\begin{aligned}
\hat{f}(\hat{\xi}) &\equiv \hat{V}^T f(x^{i+1} + \hat{V} \hat{\xi}) \\
\hat{g}(\hat{\xi}) &\equiv \hat{V}^T g(x^{i+1} + \hat{V} \hat{\xi}) \\
\hat{q}(\hat{\xi}) &\equiv \hat{q}(x^{i+1} + \hat{V} \hat{\xi}) \\
\hat{r}(\hat{\xi}) &\equiv \hat{r}(x^{i+1} + \hat{V} \hat{\xi}) 
\end{aligned}
\end{align}
We first consider the case of the second order partial derivatives of the optimal cost, so we must verify that the left hand side of \eqref{eqn:rcvrd_opt_cost_prtl_deriv_diff_just_prtls_bnd} is $\mathcal{O}(\norm{x^{i+1}-x^i}^d)$.
The characteristic second order partial derivatives of the optimal cost are given by \eqref{eqn:ddpiddxi_cmptd} and the computed optimal control is given by \eqref{eqn:dkappa_dxi_cmptd}.
The exact second order partial derivatives of the optimal const and the optimal control satisfy the equations
\begin{multline} %
\ppderiv{\tilde{\pi}}{2}{\xi_j}{}{\xi_1}{}(0)
=
-\frac{1}{\norm{\dot{x}}}
\Biggbracks{
\pderiv{\tildepi}{}{\xi}(0)
\Bigpars{
\pderiv{\tildef}{}{\xi_j}(0) + \pderiv{\tildeg}{}{\xi_j}(0) \tildekappa(0)
\\
+
\pderiv{\tildeq}{}{\xi_j}(0) + \frac{1}{2} \pderiv{\tilder}{}{\xi_j}(0) \tildekappa(0)^2
}
}
\end{multline}
and
\begin{equation} %
\tildekappa(0) = -\frac{1}{\tilder(0)}
\Bigpars{
\pderiv{\tildepi}{}{\xi}(0) \tildeg(0)
}
\end{equation}
The difference between $\tppderiv{\hat{\pi}^{i+1}}{2}{\hat{\xi}_j}{}{\hat{\xi}_1}{}(0)$ and $\ppderiv{\tilde{\pi}}{2}{\xi_j}{}{\xi_1}{}(0)$ can be bounded in terms of two types of differences.
The first type involves terms that are the difference between the partial derivatives with respect to the two changes of variables of the problem data.
\begin{align}
\label{eqn:prob_data_chng_vars_diff}
\begin{aligned}
\pderiv{\tildef}{}{\xi}(0) &- \pderiv{\hatf}{}{\hat{\xi}}(0)
\\
\pderiv{\tildeq}{}{\xi}(0) &- \pderiv{\hatq}{}{\hat{\xi}}(0)
\end{aligned}
& &
\begin{aligned}
\pderiv{\tildeg}{}{\xi}(0) &- \pderiv{\hatg}{}{\hat{\xi}}(0) 
\\
\pderiv{\tilder}{}{\xi}(0) &- \pderiv{\hatr}{}{\hat{\xi}}(0) 
\end{aligned}
\end{align}
If the distance between the current and previous patch point is less than the maximum consecutive patch point distance $h$, then by Lemmas \ref{lem:cnvnt_basis_id_pert_bnd} and \ref{lem:prtl_drv_chng_crds_diff}, there is a single bound that is proportional to $\norm{x^{i+1}-x^i}^{d+1}$ that bounds each difference in \eqref{eqn:prob_data_chng_vars_diff}.

The second type of difference involves terms that are the difference between exact and computed values of derivatives with respect to the changes of coordinates of the optimal cost.
These terms are
\begin{align}
\label{eqn:soln_chng_vars_diff}
\pderiv{\tildepi}{}{\xi}(0) - \pderiv{\hatpi^{i+1}}{}{\hat{\xi}}(0)
& &
\tildekappa(0) - \hatkappa^{i+1}(0)
& &
\frac{1}{\norm{\dot{x}^{i+1}}} - \frac{1}{\norm{\dot{x}}} 
\end{align}
By Theorem \ref{thm:phi1_lte_bound}, the difference between the computed and exact first order partial derivatives of the optimal cost is bounded by
\begin{equation}
\label{eqn:1st_ordr_opt_cost_diff_r_bnd}
\Bignorm{ \pderiv{\pi}{}{x}(x^{i+1}) - \pderiv{\pi^{i+1}}{}{x}(x^{i+1}) } \le T' \norm{x^{i+1} - x^i}^{d+1}
\end{equation}
for some $T' < \infty$ and by Lemma \ref{lem:cnvnt_basis_id_pert_bnd}, there exists $B < \infty$ such that
\begin{equation}
\label{eqn:can_basis_id_pert_r_bnd}
\norm{\hat{V}^T V - I} \le B \norm{x^{i+1} - x^i}^{d+1}
.
\end{equation}

Each difference in \eqref{eqn:soln_chng_vars_diff} shares a single upper bound that is a linear combination of the right hand sides of 
\eqref{eqn:1st_ordr_opt_cost_diff_r_bnd} and \eqref{eqn:can_basis_id_pert_r_bnd}.
The first difference is bounded by
\begin{equation*}
\begin{aligned}
\Bignorm{\pderiv{\tildepi}{}{\xi}(0) - \pderiv{\hatpi^{i+1}}{}{\hat{\xi}}(0) }
&=
\Bignorm{\Bigbracks{\pderiv{\pi}{}{x}(x^{i+1}) - \pderiv{\pi^{i+1}}{}{x}(x^{i+1}) + \pderiv{\pi}{}{x}(x^{i+1})(V \hat{V}^T - I)} \hat{V}} \\
&\le
\Bigpars{T' + B \sup_{\closedcompdomwoalb} \Bignorm{\pderiv{\pi}{}{x}(x)}}\norm{x^{i+1} - x^i}^{d+1}
\end{aligned}
\end{equation*}
and the second difference obeys
\begin{equation*}
\norm{\tildekappa(0) - \hatkappa^{i+1}(0) }
\le
\Bigpars{T' \sup_{\closedcompdomwoalb} \frac{\norm{g(x)}}{r(x)}} \norm{x^{i+1}-x^i}^{d+1}.
\end{equation*}
By assumption, the distance between $x^i$ and $x^{i+1}$ is less than the maximum consecutive patch point distance $h$ of Corollary \ref{cor:phi1_lte_bound} , so
\begin{equation}
\sup_{\substack{\closedcompdomwoalb \\ \norm{x^{i+1}-x^i} \le h}}
  \bigpars{ \norm{\dot{x}}( \norm{\dot{x}} - \norm{\dot{x} - \dot{x}^{i+1}} )}^{-1} < \infty
\end{equation}
and the third difference in \eqref{eqn:soln_chng_vars_diff} is bounded by
\begin{align*}
\Bigabs{\frac{1}{\norm{\dot{x}}} - \frac{1}{\norm{\dot{x}^{i+1}}}}
&\le
\Bigpars{
T' \sup_{\substack{\closedcompdomwoalb \\ \norm{x^{i+1}-x^i} \le h}}
\frac{\norm{g(x)}^2}{r(x) \bigpars{\norm{\dot{x}}( \norm{\dot{x}} - \norm{\dot{x} - \dot{x}^{i+1}} )}}
}
\norm{x^{i+1} - x^i}^{d+1}
\end{align*}

Our next task is to demonstrate the contribution from the second order non characteristic partial derivatives to \eqref{eqn:rcvrd_opt_cost_prtl_deriv_diff_just_prtls_bnd} is $\mathcal{O}(\norm{x^{i+1}-x^i}^d)$.
Let $\hat{\pi}^{i+1}(\hat{\xi}) \equiv \bar{\pi}(x^{i+1} + \hat{V} \hat{\xi})$ where $\bar{\pi}$ denotes the Taylor polynomial of the optimal cost centered at $x=x^i$.
If neither $j_1$ nor $j_2$ are one, then by Corollary \ref{cor:prtl_drv_chng_crds_diff2},
the difference between the non characteristic partial derivatives is bounded by
\begin{multline}
\label{eqn:nonchar_prtl_deriv_diff_bound}
\Bigabs{
\ppderiv{\tildepi}{2}{\xi_{j_1}}{}{\xi_{j_2}}{}(0) - 
\ppderiv{\hatpi^{i+1}}{2}{\hat{\xi}_{j_1}}{}{\hat{\xi}_{j_2}}{}(0)
 }
\le
\bigpars{2^2-1}\Bignorm{\pderiv{}{}{x} \otimes \pderiv{}{}{x} [\pi(x)]_{x=x^{i+1}}} \norm{V^T \hat{V} - I}
\\
+ \bigpars{2^2-1}\Bignorm{\pderiv{}{}{x} \otimes \pderiv{}{}{x} [\pi(x) - \bar{\pi}(x)]_{x=x^{i+1}}} 
\end{multline}
The first term on the right hand side of the inequality \eqref{eqn:nonchar_prtl_deriv_diff_bound} is $\mathcal{O}(\norm{x^{i+1}-x^i}^{d+1})$ by the assumption that $\pi$ is smooth on the compact computational domain, and \eqref{eqn:can_basis_id_pert_r_bnd}.
Since $\bar{\pi}$ is the Taylor polynomial of $\pi$ centered at $x^i$, then each entry in $\tpderiv{}{}{x} \otimes \tpderiv{}{}{x} [\pi(x) - \bar{\pi}(x)]_{x=x^{i+1}}$ is bounded by $R_T \norm{x^{i+1} - x^i}^d$, so the second term on the right hand side of the inequality in \eqref{eqn:nonchar_prtl_deriv_diff_bound} is $\mathcal{O}(\norm{x^{i+1}-x^i}^d)$.

We can conclude that the bound in \eqref{eqn:rcvrd_opt_cost_prtl_deriv_diff_just_prtls_bnd} holds for $k=2$, so the conclusion of the theorem holds in the case of the second order partial derivatives of the optimal cost. 

By the formula for $\tpderiv{\hatkappa^{i+1}}{}{\hat{\xi}_j}(0)$ in \eqref{eqn:dkappa_dxi_cmptd} and the same reasoning we just employed, we can conclude that the difference between $\tpderiv{\hatkappa^{i+1}}{}{\hat{\xi}_j}(0)$ and $\tpderiv{\tildekappa}{}{\hat{\xi}_j}(0)$ is $\mathcal{O}(\norm{x^{i+1}-x^i}^d)$.

The proof that the higher order partial derivatives satisfy the inequality in \eqref{eqn:rcvrd_opt_cost_prtl_deriv_diff_just_prtls_bnd} is analogous to the second order case.
We outline the proof of the third order case, which is representative of the higher order cases.
The computed third order characteristic partial derivatives with respect to the change of variables of the optimal cost is calculated from the formula in \S \ref{sec:deriv_formulas},
and the exact characteristic partial derivatives have an analogous formula.
As in the case of the second order characteristic partial derivatives of the optimal cost, the bound on the difference between the exact and computed third order characteristic partial derivatives depends linearly on two types of differences.
The first type of difference is the difference between the partial derivatives of problem data functions with respect to the two changes of variables, and each one of these differences is $\mathcal{O}(\norm{x^{i+1} - x^i}^{d-1})$.
The other type of difference is the difference between the exact and computed partial derivatives with respect to the two changes of variables of the solution.
These differences are
\begin{align}
\begin{gathered}
\frac{1}{\norm{\dot{x}}} - \frac{1}{\norm{\dot{x}^{i+1}}}
\\
\pderiv{\tildepi}{}{\xi}(0) - \pderiv{\hatpi^{i+1}}{}{\hat{\xi}}(0)
\\
\ppderiv{\tildepi}{2}{\xi_j}{}{\xi}{}(0) - \ppderiv{\hatpi^{i+1}}{2}{\hat{\xi}_j}{}{\hat{\xi}}{}(0)
\end{gathered}
& &
\begin{gathered}
\tildekappa(0) - \hatkappa^{i+1}(0)
\\
\pderiv{\tildekappa}{}{\xi_j}(0) - \pderiv{\hatkappa^{i+1}}{}{\hat{\xi}_j}(0)
\end{gathered}
\end{align}
Each one of these differences was treated in the second order case and is $\mathcal{O}(\norm{x^{i+1}-x^i}^{d-1})$.

In the case of bounding the non characteristic partial derivatives, if none of the coordinate indices $j_1$, $j_2$, nor $j_3$ are one, then we may apply Corollary \ref{cor:prtl_drv_chng_crds_diff2} to the difference between the non characteristic partial derivatives
\begin{multline}
\label{eqn:nonchar_prtl_deriv_diff_bound3}
\Bigabs{
\pppderiv{\tildepi}{3}{\xi_{j_1}}{}{\xi_{j_2}}{}{\xi_{j_3}}{}(0) - 
\pppderiv{\hatpi^{i+1}}{2}{\hat{\xi}_{j_1}}{}{\hat{\xi}_{j_2}}{}{\hat{\xi}_{j_3}}{}(0)
 }
 \\
\begin{aligned}
&\le
\bigpars{2^3-1}\Bignorm{\pderiv{}{}{x} \otimes \pderiv{}{}{x} \otimes \pderiv{}{}{x} [\pi(x)]_{x=x^{i+1}}} \norm{V^T \hat{V} - I}
\\
&\phantom{\le}
+ \bigpars{2^3-1}\Bignorm{\pderiv{}{}{x} \otimes \pderiv{}{}{x} \otimes \pderiv{}{}{x} [\pi(x) - \bar{\pi}(x)]_{x=x^{i+1}}} 
\end{aligned}
\end{multline}
The first term on the right hand side of the inequality in \eqref{eqn:nonchar_prtl_deriv_diff_bound3} is $\mathcal{O}(\norm{x^{i+1}-x^i}^{d+1})$ by the assumption that $\pi$ is smooth on the compact computational domain, and \eqref{eqn:can_basis_id_pert_r_bnd}.
Since $\bar{\pi}$ is the Taylor polynomial of $\pi$ centered at $x^i$, then each entry in $\tpderiv{}{}{x} \otimes \tpderiv{}{}{x} \otimes \tpderiv{}{}{x}[\pi(x) - \bar{\pi}(x)]_{x=x^{i+1}}$ is bounded by $R_T \norm{x^{i+1} - x^i}^{d-1}$, so the second term on the right hand side of the inequality in \eqref{eqn:nonchar_prtl_deriv_diff_bound3} is $\mathcal{O}(\norm{x^{i+1}-x^i}^{d-1})$.

\end{proof}

\section{Lipschitz condition for second and higher order partial derivatives} \label{sec:lip_k}
In this section we prove that $\phi_j$ satisfies the Lipschitz condition when $j \ge 2$ under the conditions on the optimal control problem in \S \ref{sec:drvtn:opt_cntrl_ass}, most importantly that the optimal cost is a strict Lyupanov function.
\begin{theorem}
\label{thm:phi_2_is_lipschitz} 
Under the conditions on the optimal control problem in \S \ref{sec:drvtn:opt_cntrl_ass}, there exists a maximum step size $h_L>0$, maximum multiplier $\Mmax>0$, and Lipschitz constant $L_2$ such that for all $x^i$ and $x^{i+1}$ in $\closedcompdomwoalb$ within a distance of $h_L$ of each other, and for all $M \le \Mmax$,
\begin{gather}
\label{eqn:lip2_main_diff_bound}
\norm{\phi_k(C^i) - \phi_k(\hat{C}^i)} \le L_2 M \norm{x^{i+1} - x^i}^{d+2-k}
\text{ for } 2 \le k \le d+1
\intertext{whenever} \notag
\norm{C_j^i - \hat{C}_j^i} \le M \norm{x^{i+1} - x^i}^{d+2-j}
\text{ for }
0 \le j \le d+1
\end{gather}
where $C^i$ is the vector of exact partial derivatives up to order $d+1$ of the optimal cost at $x^i$.
\end{theorem}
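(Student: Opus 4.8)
The plan is to mirror the structure of the proof of Theorem~\ref{thm:opt_cost_local_trunc_err_higer_order} almost verbatim, replacing the role of the exact solution there by the computed solution generated from the perturbed coefficient vector $\hat{C}^i$, and carrying the extra factor of $M$ throughout. I would argue by induction on the order $k$, beginning with $k=2$. For each of the two inputs I introduce a change of variables whose first basis vector is the corresponding computed normalized optimal direction: let $V$ have first column built from $C^i$ and $\hat{V}$ have first column built from $\hat{C}^i$, and let $\pi_C$ and $\pi_{\hat{C}}$ denote the two computed optimal costs expressed in these coordinates. Using the recovery formula \eqref{eqn:rcvrd_prtls_cmptd} and the same algebra that produced \eqref{eqn:rcvrd_opt_cost_prtl_deriv_diff}, I would write $\phi_k(C^i)-\phi_k(\hat{C}^i)$ as a unit-norm column vector multiplied by a row vector, and bound its norm by the sum of a \emph{difference-of-partials} term $\norm{\kronderivelip{\xi}{\xi}[\pi_C]_{\xi=0} - \kronderivelip{\hat{\xi}}{\hat{\xi}}[\pi_{\hat{C}}]_{\hat{\xi}=0}}$ and a \emph{perturbation-of-identity} term $\norm{\kronderivelip{\xi}{\xi}[\pi_C]_{\xi=0}}\,\norm{(V^T\hat{V})^{\otimes k}-I}$.

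First I would control the two bases. Because $C^i$ holds the exact derivatives at $x^i$ and $\norm{C_j^i-\hat{C}_j^i}\le M\norm{x^{i+1}-x^i}^{d+2-j}$, Theorem~\ref{thm:phi_1_is_lipschitz} gives $\norm{\phi_1(C^i)-\phi_1(\hat{C}^i)}\le L_1 M\norm{x^{i+1}-x^i}^{d+1}$, so the two computed optimal directions differ by $\mathcal{O}(M\norm{x^{i+1}-x^i}^{d+1})$; Corollary~\ref{cor:canonical_mat_prod_iden_diff_v_bnd} (via Lemma~\ref{lem:canonical_basis_exists}) then produces $V$ and $\hat{V}$ with $\norm{\hat{V}^T V - I}\le B\,M\norm{x^{i+1}-x^i}^{d+1}$, exactly as in Lemma~\ref{lem:cnvnt_basis_id_pert_bnd}. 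The perturbation-of-identity term is dispatched just as \eqref{eqn:higher_order_trunc_err_term1_bnd}: the factor $\norm{\kronderivelip{\xi}{\xi}[\pi_C]}$ is uniformly bounded because it is the Kronecker derivative of the computed solution built from the exact data $C^i$, and Lemma~\ref{lem:kron_prd_id_diff} converts the basis estimate into $(2^k-1)\,\norm{V^T\hat{V}-I}$, so this term is $\mathcal{O}(M\norm{x^{i+1}-x^i}^{d+1})$, which is of strictly higher order than the target exponent $d+2-k$ for every $k\ge 1$.

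The heart of the argument is the difference-of-partials term, which I would bound entrywise by splitting into characteristic and non-characteristic entries, following \eqref{eqn:rcvrd_opt_cost_prtl_deriv_diff_just_prtls_bnd}. For a characteristic entry I use the HJB formula \eqref{eqn:ddpiddxi_cmptd} (and its higher-order analogues from \S\ref{sec:deriv_formulas}), so the difference reduces to differences of the problem-data partials under the two changes of variables, bounded by $\mathcal{O}(M\norm{x^{i+1}-x^i}^{d+1})$ through Lemma~\ref{lem:prtl_drv_chng_crds_diff} and the estimate on $\norm{\hat{V}^T V - I}$, together with differences of the computed lower-order solution quantities $1/\norm{\dot{x}}$, $\pderiv{\hatpi}{}{\hatxi}(0)$, $\hatkappa(0)$, and the partials of order at most $k-1$; the first-order ones are $\mathcal{O}(M\norm{x^{i+1}-x^i}^{d+1})$ by Theorem~\ref{thm:phi_1_is_lipschitz}, and the ones of order between $2$ and $k-1$ are $\mathcal{O}(M\norm{x^{i+1}-x^i}^{d+2-(k-1)})$ by the inductive hypothesis, so all are $\mathcal{O}(M\norm{x^{i+1}-x^i}^{d+2-k})$ or of higher order. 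For a non-characteristic entry, the partials are computed by inheritance from $P(C^i,\cdot)$ and $P(\hat{C}^i,\cdot)$, so Corollary~\ref{cor:prtl_drv_chng_crds_diff2} bounds the difference by a change-of-variables piece of order $d+1$ plus the intrinsic piece $\norm{\kronderivelip{x}{x}[P(C^i-\hat{C}^i,x)]_{x=x^{i+1}}}$, and here Lemma~\ref{lem:poly_lip} applied with $\norm{C_\ell^i-\hat{C}_\ell^i}\le M\norm{x^{i+1}-x^i}^{d+2-\ell}$ delivers precisely the dominant bound $\Linh M\norm{x^{i+1}-x^i}^{d+2-k}$. Collecting the two cases together with the perturbation term establishes \eqref{eqn:lip2_main_diff_bound} for the given $k$, and the induction closes.

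The main obstacle I anticipate is the exponent bookkeeping in the characteristic case for $k\ge 3$: one must verify that every contribution arising from the recursive HJB formula either matches the exponent $d+2-k$ (which happens only through the inherited non-characteristic term via Lemma~\ref{lem:poly_lip}) or is strictly higher order while still carrying the factor $M$, so that nothing degrades the required $L_2 M\norm{x^{i+1}-x^i}^{d+2-k}$ bound. Unlike the local-truncation-error proof of Theorem~\ref{thm:opt_cost_local_trunc_err_higer_order}, where the dominant exponent came from the Taylor remainder $R_T$, here it must come from the coefficient hypothesis through Lemma~\ref{lem:poly_lip}, and the subtle point is that this substitution is legitimate precisely because $C^i$ is the exact data, which keeps $\norm{\kronderivelip{\xi}{\xi}[\pi_C]}$ and the computed lower-order quantities uniformly bounded and therefore renders every cross term genuinely higher order.
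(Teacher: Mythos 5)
Your proposal follows essentially the same route as the paper's proof: two changes of variables anchored at the optimal directions computed from $C^i$ and $\hat{C}^i$, a bound on $\norm{\hat{V}^T\tilde{V}-I}$ of order $M\norm{x^{i+1}-x^i}^{d+1}$ obtained from Theorem \ref{thm:phi_1_is_lipschitz} together with Lemma \ref{lem:canonical_basis_exists} and its corollaries, a decomposition into a perturbation-of-identity term and a difference-of-partials term split into characteristic (HJB formulas) and inherited (Lemma \ref{lem:poly_lip}, which supplies the dominant exponent $d+2-k$) entries, and induction on $k$. The only substantive ingredient you leave implicit is the uniform positive lower bound on $\norm{\xdot{c}{i+1}}$ and on $\norm{\xdot{c}{i+1}}-\norm{\xdot{c}{i+1}-\xdot{\hat{c}}{i+1}}$ (the paper's Lemma \ref{lem:phi_2_lip_basic_infs}), but you do acknowledge the need for it, so the argument matches the paper's in all essentials.
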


The proof mirrors the proof of the local truncation error result of Theorem \ref{thm:opt_cost_local_trunc_err_higer_order}.
We will delay the proof of the theorem until after we have introduced some notation and established some relevant bounds.
We denote the two approximate optimal directions at $x^{i+1}$ computed from $C^i$ and $\hat{C}^i$ as
\begin{align*}
\xdot{c}{i+1}
&\equiv
f(x^{i+1}) - \frac{\phi_1(C^i) \cdot g(x^{i+1})}{r(x^{i+1})} g(x^{i+1})
\intertext{and}
\xdot{\hat{c}}{i+1}
&\equiv
f(x^{i+1}) - \frac{\phi_1(\hat{C}^i) \cdot g(x^{i+1})}{r(x^{i+1})} g(x^{i+1})
\end{align*}

As in the case of the local truncation error result, we will work with two changes of variables that we choose to be convenient.
These changes of coordinates are
\begin{align}
\label{eqn:lip_tilde_chng_var}
x = x^{i+1} + \tilde{V} \tildexi
& &
\tilde{V}^1 = \frac{\xdot{c}{i+1}}{\norm{\xdot{c}{i+1}}}
\intertext{and}
\label{eqn:lip_hat_chng_var}
x = x^{i+1} + \hat{V} \hatxi
& &
\hat{V}^1 = \frac{\xdot{\hat{c}}{i+1}}{\norm{\xdot{\hat{c}}{i+1}}}
\end{align}
We denote the approximate optimal cost and optimal control polynomials under the two changes of variables as
\begin{align*}
\tildepi^{i+1}(\tildexi)
& &
\hatpi^{i+1}(\hatxi)
\\
\tildekappa^{i+1}(\tildexi)
& &
\hatkappa^{i+1}(\hatxi)
\end{align*}
and we denote the problem data functions under the two changes of variables as
\begin{align}
\label{eqn:lip_prob_data_chng_vars}
\begin{aligned}
\tilde{f}(\tildexi) &\equiv \tilde{V}^T f(x^{i+1} + \tilde{V} \tildexi) \\
\tilde{g}(\tildexi) &\equiv \tilde{V}^T g(x^{i+1} + \tilde{V} \tildexi) \\
\tilde{q}(\tildexi) &\equiv q(x^{i+1} + \tilde{V} \tildexi) \\
\tilde{r}(\tildexi) &\equiv r(x^{i+1} + \tilde{V} \tildexi) 
\end{aligned}
& &
\begin{aligned}
\hat{f}(\hat{\xi}) &\equiv \hat{V}^T f(x^{i+1} + \hat{V} \hat{\xi}) \\
\hat{g}(\hat{\xi}) &\equiv \hat{V}^T g(x^{i+1} + \hat{V} \hat{\xi}) \\
\hat{q}(\hat{\xi}) &\equiv q(x^{i+1} + \hat{V} \hat{\xi}) \\
\hat{r}(\hat{\xi}) &\equiv r(x^{i+1} + \hat{V} \hat{\xi}) 
\end{aligned}
\end{align}

In the course of proving Theorem \ref{thm:phi_2_is_lipschitz}, we will need a guarantee that certain quantities depending on the computed solution are finite.
Lemma \ref{lem:phi_2_lip_basic_infs} provides this guarantee.
\begin{lemma}
\label{lem:phi_2_lip_basic_infs}
If the optimal control problem satisfies the assumptions of \S \ref{sec:drvtn:opt_cntrl_ass}, then there exists an $\Mmax > 0$ and a maximum consecutive patch point distance $h_2>0$ such that, for all $M \le \Mmax$ and for all $x^i$ and $x^{i+1}$ in $\closedcompdomwoalb$, 
\begin{gather}
\begin{gathered}
 \inf \norm{\xdot{c}{i+1}} > 0
\\
 \inf \bigpars{\norm{\xdot{c}{i+1}} - \norm{\xdot{c}{i+1} - \xdot{\hat{c}}{i+1}} } > 0
\end{gathered}
\intertext{whenever}
\notag
\norm{x^{i+1} - x^i} \le h_2
\text{ and } 
\norm{C_j^i - \hat{C}_j^i} \le M \norm{x^{i+1} - x^i}^{d+2-j}
\text{ for } 0 \le j \le d+1
\end{gather}
where the infimums are taken over all $M \le \Mmax$ and all $x^{i+1}$ in $\closedcompdomwoalb$ such that $x^{i+1} - x^i \le h_2$. 
$C^i$ holds the exact partial derivatives of $\pi$ at $x^i \in \closedcompdomwoalb$.

\begin{proof}
The exact optimal direction at $x^{i+1}$ is $\dot{x} = f(x^{i+1}) + g(x^{i+1}) \kappa(x^{i+1})$.
$C^i$ holds the exact partial derivatives of the optimal cost at $x^i$ up to order $d+1$, 
so by Theorem \ref{thm:phi1_lte_bound}, 
\begin{equation}
\label{eqn:phi_2_lip_basic_infs_lim1}
\lim_{x^{i+1} \rightarrow x^i} \bigpars{(f(x^{i+1}) + g(x^{i+1}) \kappa(x^{i+1})) - \xdot{c}{i+1} }
= 
0
.
\end{equation}
The assumption that $\pi$ is a strict Lyapunov function guarantees that the infimum over $\closedcompdomwoalb$ of $\norm{\dot{x}}$ is strictly greater than zero. 
The limit in \eqref{eqn:phi_2_lip_basic_infs_lim1} is uniform, so for all $\epsilon_1$ in $0 < \epsilon_1 < 1$, there exists a $\delta_1 > 0$ independent of $x^i$ and $x^{i+1}$ such that
\begin{equation*}
\norm{f(x^{i+1}) + g(x^{i+1}) \kappa(x^{i+1}) - \xdot{c}{i+1} }
\le 
\epsilon_1 \inf_{\closedcompdomwoalb} \norm{\dot{x}}
\end{equation*}
Therefore
\begin{align*}
\norm{\xdot{c}{i+1}}
&\ge
\norm{f(x^{i+1}) + g(x^{i+1}) \kappa(x^{i+1})} 
  - \norm{f(x^{i+1}) + g(x^{i+1}) \kappa(x^{i+1}) -\xdot{c}{i+1} } \\
&\ge
(1- \epsilon_1 )\inf_{\closedcompdomwoalb} \norm{\dot{x}} \\
&>
0
\end{align*}
Thus we have established the first inequality of the lemma statement.

We will assume for the rest of the proof that $M \le \Mmax$.
By Theorem \ref{thm:phi_1_is_lipschitz}, if $\norm{C_j^i - \hat{C}_j^i} \le M \norm{x^{i+1} - x^i}^{d+2-j}$ for $0 \le j \le d+1$, then
\begin{equation}
\label{eqn:phi_2_lip_basic_infs_lim2}
\lim_{x^{i+1} \rightarrow x^i} \xdot{c}{i+1} - \xdot{\hat{c}}{i+1} 
= 
0
.
\end{equation}
The limit in \eqref{eqn:phi_2_lip_basic_infs_lim2} is uniform, so for all $\epsilon_2$ in $0 < \epsilon_2 < 1$, there exists a $\delta_2 > 0$ independent of $x^i$ and $x^{i+1}$ such that
\begin{gather*}
\norm{\xdot{c}{i+1} - \xdot{\hat{c}}{i+1} }
\le 
\epsilon_2 \inf_{\closedcompdomwoalb} \norm{\dot{x}}
\intertext{whenever}
\norm{x^{i+1} - x^{i}} \le \delta_2
\text{ and }
\norm{C_j^i - \hat{C}_j^i } \le M \norm{x^{i+1}-x^i}^{d+2-j}
\text{ for } 0 \le j \le d+1
.
\end{gather*}
Therefore, if $\norm{\xdot{c}{i+1}} \ge (1-\epsilon_1) \inf_{\closedcompdomwoalb}\norm{\dot{x}}$ where $0 < \epsilon_1 < 1$, then
\begin{gather*}
\begin{aligned}
\norm{\xdot{c}{i+1}} - \norm{\xdot{c}{i+1} - \xdot{\hat{c}}{i+1}}
&\ge
(1- \epsilon_1 )\inf_{\closedcompdomwoalb} \norm{\dot{x}}
- \epsilon_2 (1- \epsilon_1 )\inf_{\closedcompdomwoalb} \norm{\dot{x}} \\
&= (1- \epsilon_1 ) (1- \epsilon_2 ) \inf_{\closedcompdomwoalb} \norm{\dot{x}} \\
&>
0
\end{aligned}
\intertext{whenever}
\norm{x^{i+1} - x^{i}} \le \delta_1
\text{ and }
\norm{C_j^i - \hat{C}_j^i } \le M \norm{x^{i+1}-x^i}^{d+2-j}
\text{ for } 0 \le j \le d+1
\end{gather*}
so the second infimum of the lemma statement has been established.
We take $h_2$ to be the smaller of $\delta_1$ and $\delta_2$.
\end{proof}
\end{lemma}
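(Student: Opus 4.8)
The plan is to reduce both inequalities to the single fact, guaranteed by the strict Lyapunov assumption (Assumption \ref{ass:strict_lyup_pi}), that the exact optimal direction $\dot{x} = f(x^{i+1}) + g(x^{i+1})\kappa(x^{i+1})$ is bounded away from zero on the compact set $\closedcompdomwoalb$, which excludes the origin because the Al'brekht patch has been removed. Since $\dot{x}$ is continuous there, $\inf_{\closedcompdomwoalb}\norm{\dot{x}} > 0$. Both computed directions $\xdot{c}{i+1}$ and $\xdot{\hat{c}}{i+1}$ differ from $\dot{x}$, and from each other, by quantities I will show vanish uniformly as $h_2 \to 0$; so for $h_2$ small enough each stays within a fixed fraction of this positive infimum.

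First I would treat $\inf \norm{\xdot{c}{i+1}}$. Using $\kappa(x^{i+1}) = -r(x^{i+1})^{-1}\bigpars{\dpidx(x^{i+1})\cdot g(x^{i+1})}$ and the definition of $\xdot{c}{i+1}$, the difference $\dot{x} - \xdot{c}{i+1}$ is a scalar multiple of $g(x^{i+1})$ whose magnitude is controlled, via Cauchy-Schwarz and the bounded factor $\norm{g(x^{i+1})}^2/r(x^{i+1})$, by the gradient error $\norm{\dpidx(x^{i+1}) - \phi_1(C^i)}$, where $\phi_1(C^i)$ is the gradient computed at $x^{i+1}$ from the exact coefficients at $x^i$. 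By Theorem \ref{thm:phi1_lte_bound} (equivalently Corollary \ref{cor:phi1_lte_bound}) this error is $\mathcal{O}(\norm{x^{i+1}-x^i}^{d+1})$ with a constant uniform over the compact domain, so $\norm{\dot{x} - \xdot{c}{i+1}} \to 0$ uniformly. Given $\epsilon_1 \in (0,1)$, shrinking $h_2$ so that $\norm{\dot{x} - \xdot{c}{i+1}} \le \epsilon_1 \inf_{\closedcompdomwoalb}\norm{\dot{x}}$ and applying the reverse triangle inequality yields $\norm{\xdot{c}{i+1}} \ge (1-\epsilon_1)\inf_{\closedcompdomwoalb}\norm{\dot{x}} > 0$.

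For the second inequality I would control $\norm{\xdot{c}{i+1} - \xdot{\hat{c}}{i+1}}$, which by the same algebra is a multiple of $g(x^{i+1})$ governed by $\norm{\phi_1(C^i) - \phi_1(\hat{C}^i)}$. By Theorem \ref{thm:phi_1_is_lipschitz}, whenever $\norm{C_j^i - \hat{C}_j^i} \le M\norm{x^{i+1}-x^i}^{d+2-j}$ for all $j$ and $M \le \Mmax$, this is at most $L_1 M \norm{x^{i+1}-x^i}^{d+1}$, which again vanishes uniformly as $h_2 \to 0$. Choosing $\epsilon_2 \in (0,1)$ and shrinking $h_2$ so that $\norm{\xdot{c}{i+1} - \xdot{\hat{c}}{i+1}} \le \epsilon_2 \inf_{\closedcompdomwoalb}\norm{\dot{x}}$, I combine with the previous step to obtain
\begin{equation*}
\norm{\xdot{c}{i+1}} - \norm{\xdot{c}{i+1} - \xdot{\hat{c}}{i+1}} \ge (1-\epsilon_1)(1-\epsilon_2)\inf_{\closedcompdomwoalb}\norm{\dot{x}} > 0,
\end{equation*}
and take the final $h_2$ to be the smaller of the two thresholds.

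The main obstacle is uniformity: a single pair $(\Mmax, h_2)$ must serve every admissible $x^i$, $x^{i+1}$, and every $M \le \Mmax$ at once. This is precisely why I rely on Theorems \ref{thm:phi1_lte_bound} and \ref{thm:phi_1_is_lipschitz} rather than mere pointwise continuity: their constants $T$, $L_1$, and $\Mmax$ are already uniform over $\closedcompdomwoalb$, so both convergences $\norm{\dot{x} - \xdot{c}{i+1}} \to 0$ and $\norm{\xdot{c}{i+1} - \xdot{\hat{c}}{i+1}} \to 0$ are uniform in $x^i$ and $M$. A minor point to check is that the recurring factor $\norm{g(x^{i+1})}^2/r(x^{i+1})$ is bounded above on the compact domain (immediate since $r > 0$ and $g$ is continuous), so it cannot disrupt the uniform vanishing.
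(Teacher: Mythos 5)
Your proposal is correct and follows essentially the same route as the paper's proof: the strict Lyapunov assumption gives $\inf_{\closedcompdomwoalb}\norm{\dot{x}} > 0$ on the compact domain, Theorem \ref{thm:phi1_lte_bound} supplies the uniform vanishing of $\dot{x} - \xdot{c}{i+1}$ for the first infimum, Theorem \ref{thm:phi_1_is_lipschitz} supplies the uniform vanishing of $\xdot{c}{i+1} - \xdot{\hat{c}}{i+1}$ for the second, and $h_2$ is taken as the smaller of the two thresholds. Your added step making explicit that the direction differences are scalar multiples of $g(x^{i+1})$ controlled by the gradient errors via the bounded factor $\norm{g}^2/r$ is a detail the paper leaves implicit, but it does not change the argument.
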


Corollary \ref{cor:lip_nrmlzd_opt_drctn_bnd} provides a guarantee that any term depending on the difference between the normalized optimal directions at $x^{i+1}$ computed from $C^i$ and $\hat{C}^i$ does not affect the order of the main bound \eqref{eqn:lip2_main_diff_bound} in Theorem \ref{thm:phi_2_is_lipschitz}.
\begin{corollary}
\label{cor:lip_nrmlzd_opt_drctn_bnd}
If the optimal cost $\pi$ is a strict Lyapunov function on the computational domain $\compdom$, then there exists a Lipschitz constant $L'$, maximum multiplier $\Mmax > 0$ and a maximum consecutive patch point distance $h_2>0$ such that for all $x^i$ and $x^{i+1}$ in $\closedcompdomwoalb$ within a distance of $h_2$ of each other, and for all $M \le \Mmax$,
\begin{gather*}
\Biggnorm{
\frac{\xdot{c}{i+1}}{\norm{\xdot{c}{i+1}}} - \frac{\xdot{\hat{c}}{i+1}}{\norm{\xdot{\hat{c}}{i+1}}}
}
\le
L' M \norm{x^{i+1} - x^i}^{d+1}
\intertext{whenever}
\notag
\norm{C_j^i - \hat{C}_j^i} \le M \norm{x^{i+1} - x^i}^{d+2-j}
\text{ for } 0 \le j \le d+1
\end{gather*}
where $C^i$ holds the exact partial derivatives of $\pi$ at $x^i \in \closedcompdomwoalb$.
\begin{proof}
The bound in the corollary statement follows from the Lemma \ref{lem:phi_2_lip_basic_infs} and the inequality
\begin{equation*}
\Biggnorm{
\frac{\xdot{c}{i+1}}{\norm{\xdot{c}{i+1}}} - \frac{\xdot{\hat{c}}{i+1}}{\norm{\xdot{\hat{c}}{i+1}}}
}
\le
\frac{ \norm{\xdot{c}{i+1} - \xdot{\hat{c}}{i+1}} } { \norm{\xdot{c}{i+1}} - \norm{\xdot{c}{i+1} - \xdot{\hat{c}}{i+1}} }
.
\end{equation*}

\end{proof}
\end{corollary}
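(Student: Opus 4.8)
The plan is to reduce the corollary to two ingredients that are essentially already in place: an elementary bound on the distance between two normalized vectors, and the uniform lower bound on a denominator supplied by Lemma \ref{lem:phi_2_lip_basic_infs}. First I would invoke the elementary inequality
\begin{equation*}
\Biggnorm{\frac{v}{\norm{v}} - \frac{\hat{v}}{\norm{\hat{v}}}}
\le
\frac{\norm{v - \hat{v}}}{\norm{v} - \norm{v - \hat{v}}},
\end{equation*}
valid whenever $\norm{v - \hat{v}} < \norm{v}$ (the same inequality used in the proof of Corollary \ref{cor:canonical_mat_prod_iden_diff_v_bnd}), applied with $v = \xdot{c}{i+1}$ and $\hat{v} = \xdot{\hat{c}}{i+1}$. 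This splits the task into bounding the numerator $\norm{\xdot{c}{i+1} - \xdot{\hat{c}}{i+1}}$ from above and the denominator $\norm{\xdot{c}{i+1}} - \norm{\xdot{c}{i+1} - \xdot{\hat{c}}{i+1}}$ from below.

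For the numerator I would exploit the explicit formulas for the two computed optimal directions. Since both share the common term $f(x^{i+1})$, their difference collapses to
\begin{equation*}
\xdot{c}{i+1} - \xdot{\hat{c}}{i+1}
=
-\frac{\bigpars{\phi_1(C^i) - \phi_1(\hat{C}^i)} \cdot g(x^{i+1})}{r(x^{i+1})}\, g(x^{i+1}),
\end{equation*}
so by Cauchy--Schwarz
\begin{equation*}
\norm{\xdot{c}{i+1} - \xdot{\hat{c}}{i+1}}
\le
\frac{\norm{g(x^{i+1})}^2}{r(x^{i+1})}\,\norm{\phi_1(C^i) - \phi_1(\hat{C}^i)}.
\end{equation*}
Because $g$ is continuous and $r$ is continuous and strictly positive on the compact set $\closedcompdomwoalb$, the prefactor $\norm{g}^2/r$ is uniformly bounded there. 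Under exactly the hypotheses of this corollary, Theorem \ref{thm:phi_1_is_lipschitz} bounds $\norm{\phi_1(C^i) - \phi_1(\hat{C}^i)}$ by $L_1 M \norm{x^{i+1} - x^i}^{d+1}$, so the numerator is $\mathcal{O}\bigpars{M \norm{x^{i+1} - x^i}^{d+1}}$, uniformly over $\closedcompdomwoalb$.

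For the denominator, Lemma \ref{lem:phi_2_lip_basic_infs} provides precisely a strictly positive uniform lower bound on $\norm{\xdot{c}{i+1}} - \norm{\xdot{c}{i+1} - \xdot{\hat{c}}{i+1}}$, valid for all $M \le \Mmax$ and all consecutive patch points within $h_2$; the same bound also guarantees $\norm{\xdot{c}{i+1} - \xdot{\hat{c}}{i+1}} < \norm{\xdot{c}{i+1}}$, which is exactly the condition needed for the elementary inequality to apply. Combining the upper bound on the numerator with the lower bound on the denominator and absorbing the uniform constants into a single $L'$ yields the claim, with $\Mmax$ and $h_2$ taken to be compatible with (the intersection of) those in Lemma \ref{lem:phi_2_lip_basic_infs} and Theorem \ref{thm:phi_1_is_lipschitz}. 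The only genuinely delicate point, and the single step I would treat most carefully, is the denominator lower bound, since without it the ratio could blow up as $x^{i+1} \to x^i$; but this is precisely the content of Lemma \ref{lem:phi_2_lip_basic_infs}, so the remaining work is to cite it and verify that its hypotheses coincide with ours.
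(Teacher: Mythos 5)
Your proposal is correct and follows essentially the same route as the paper: the paper's proof consists precisely of the elementary normalized-difference inequality plus the denominator lower bound from Lemma \ref{lem:phi_2_lip_basic_infs}, with the numerator bound via $\phi_1(C^i)-\phi_1(\hat{C}^i)$ and Theorem \ref{thm:phi_1_is_lipschitz} left implicit. Your write-up simply makes that implicit step explicit.
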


As in the case of proving the local truncation error result of Theorem \ref{thm:opt_cost_local_trunc_err_higer_order}, we will pick the changes of coordinates in \eqref{eqn:lip_tilde_chng_var} and \eqref{eqn:lip_hat_chng_var} so 
that the orthogonal matrices $\tilde{V}$ and $\hat{V}$ are almost inverses of each other.
This is the content of Corollary \ref{cor:lip_cnvtn_mat_id_diff_bnd}.
\begin{corollary} 
\label{cor:lip_cnvtn_mat_id_diff_bnd}
If the optimal cost $\pi$ is a strict Lyapunov function on the computational domain $\compdom$, then there exists a Lipschitz constant $L'$, maximum multiplier $\Mmax > 0$ and a maximum consecutive patch point distance $h_2>0$, and orthogonal matrices $\hat{V}$ and $\tilde{V}$ such that their first columns are
\begin{align*}
\tilde{V}^1 = \frac{\xdot{c}{i+1}}{\norm{\xdot{c}{i+1}}}
& &
\hat{V}^1 = \frac{\xdot{\hat{c}}{i+1}}{\norm{\xdot{\hat{c}}{i+1}}}
\end{align*}
so that for all $x^i$ and $x^{i+1}$ in $\closedcompdomwoalb$ within a distance of $h_2$ of each other and all $M \le \Mmax$,
\begin{gather*}
\norm{\hat{V}^T \tilde{V} - I }
\le
L' M \norm{x^{i+1} - x^i}^{d+1}
\intertext{whenever}
\notag
\norm{C_j^i - \hat{C}_j^i} \le M \norm{x^{i+1} - x^i}^{d+2-j}
\text{ for } 0 \le j \le d+1
\end{gather*}
where $C^i$ holds the exact partial derivatives of $\pi$ at $x^i \in \closedcompdomwoalb$, then
\begin{proof}
For a sufficiently small maximum consecutive patch point distance $h_2$,  Lemma \ref{lem:phi_2_lip_basic_infs} guarantees $\xdot{c}{i+1}$ and $\xdot{\hat{c}}{i+1}$ are close enough so that Lemma \ref{lem:canonical_basis_exists}, and Corollaries \ref{cor:canonical_mat_prod_iden_diff_ip_bnd} and \ref{cor:canonical_mat_prod_iden_diff_v_bnd} imply the bound on $\hat{V}^T \tilde{V}-I$ of the corollary statement.
\end{proof}
\end{corollary}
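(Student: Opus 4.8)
The plan is to match the two computed optimal directions $\xdot{c}{i+1}$ and $\xdot{\hat{c}}{i+1}$ with the vectors $v$ and $\hat{v}$ of Corollary~\ref{cor:canonical_mat_prod_iden_diff_v_bnd} and then assemble the bound from results already in hand. First I would apply Lemma~\ref{lem:phi_2_lip_basic_infs} to fix a maximum consecutive patch point distance $h_2>0$ and maximum multiplier $\Mmax>0$ so that, uniformly over all $x^i,x^{i+1}$ in $\closedcompdomwoalb$ with $\norm{x^{i+1}-x^i}\le h_2$ and all $M\le\Mmax$ obeying $\norm{C_j^i-\hat{C}_j^i}\le M\norm{x^{i+1}-x^i}^{d+2-j}$, both $\norm{\xdot{c}{i+1}}$ and $\norm{\xdot{c}{i+1}}-\norm{\xdot{c}{i+1}-\xdot{\hat{c}}{i+1}}$ are bounded below by one positive constant $\epsilon$. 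In particular $\norm{\xdot{c}{i+1}-\xdot{\hat{c}}{i+1}}<\norm{\xdot{c}{i+1}}$, which is exactly the hypothesis of Corollary~\ref{cor:canonical_mat_prod_iden_diff_v_bnd}; shrinking $h_2$ further keeps the two directions close enough that $\xdot{c}{i+1}\cdot\xdot{\hat{c}}{i+1}>0$, the hypothesis needed for the construction in Lemma~\ref{lem:canonical_basis_exists}.

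Next I would invoke Corollary~\ref{cor:canonical_mat_prod_iden_diff_v_bnd} with $v=\xdot{c}{i+1}$ and $\hat{v}=\xdot{\hat{c}}{i+1}$ to produce orthogonal matrices $\tilde{V}$ and $\hat{V}$ whose first columns are the prescribed normalized optimal directions, together with
\[
\norm{\hat{V}^T\tilde{V}-I}\le 4\,\frac{\norm{\xdot{c}{i+1}-\xdot{\hat{c}}{i+1}}}{\norm{\xdot{c}{i+1}}-\norm{\xdot{c}{i+1}-\xdot{\hat{c}}{i+1}}}\le \frac{4}{\epsilon}\,\norm{\xdot{c}{i+1}-\xdot{\hat{c}}{i+1}},
\]
the last step using the uniform lower bound $\epsilon$ on the denominator from the previous paragraph. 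It then remains only to show that the numerator is $\mathcal{O}\bigpars{M\norm{x^{i+1}-x^i}^{d+1}}$.

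For the numerator I would subtract the defining formulas for the two computed optimal directions, which differ only through $\phi_1(C^i)$ versus $\phi_1(\hat{C}^i)$, to get
\[
\xdot{c}{i+1}-\xdot{\hat{c}}{i+1}=-\frac{\bigpars{\phi_1(C^i)-\phi_1(\hat{C}^i)}\cdot g(x^{i+1})}{r(x^{i+1})}\,g(x^{i+1}),
\]
whence $\norm{\xdot{c}{i+1}-\xdot{\hat{c}}{i+1}}\le \tfrac{\norm{g(x^{i+1})}^2}{r(x^{i+1})}\,\norm{\phi_1(C^i)-\phi_1(\hat{C}^i)}$. Since $g$ is continuous and $r$ is continuous and strictly positive on the compact set $\closedcompdomwoalb$, the prefactor $\norm{g}^2/r$ is bounded there, and Theorem~\ref{thm:phi_1_is_lipschitz} bounds $\norm{\phi_1(C^i)-\phi_1(\hat{C}^i)}$ by $L_1 M\norm{x^{i+1}-x^i}^{d+1}$ as soon as $h_2\le h_L$ and $M\le\Mmax$. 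Absorbing all constants into a single $L'$ yields the asserted estimate $\norm{\hat{V}^T\tilde{V}-I}\le L'M\norm{x^{i+1}-x^i}^{d+1}$.

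The heavy analysis has already been discharged by the earlier results, so the only genuine obstacle is organizational: one must choose a single pair $(h_2,\Mmax)$ that simultaneously satisfies the hypotheses of Lemma~\ref{lem:phi_2_lip_basic_infs}, Corollary~\ref{cor:canonical_mat_prod_iden_diff_v_bnd}, and Theorem~\ref{thm:phi_1_is_lipschitz}, and must verify that the denominator stays uniformly positive even as $M$ ranges up to $\Mmax$. Taking $h_2$ to be the minimum of the thresholds supplied by those results, and $\Mmax$ small enough that the direction difference stays a fixed fraction below $\inf\norm{\xdot{c}{i+1}}$, handles both points, exactly as arranged inside the proof of Lemma~\ref{lem:phi_2_lip_basic_infs}. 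A slightly shorter alternative would skip the explicit numerator computation and instead feed the normalized-direction bound of Corollary~\ref{cor:lip_nrmlzd_opt_drctn_bnd} into Corollary~\ref{cor:canonical_mat_prod_iden_diff_ip_bnd}, using $\sqrt{1-\tilde{V}^1\cdot\hat{V}^1}=\tfrac{1}{\sqrt{2}}\norm{\tilde{V}^1-\hat{V}^1}$ to convert that corollary's estimate into $\norm{\hat{V}^T\tilde{V}-I}\le 2\norm{\tilde{V}^1-\hat{V}^1}$.
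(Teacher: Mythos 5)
Your proposal is correct and follows essentially the same route as the paper: the paper's one-sentence proof simply cites Lemma \ref{lem:phi_2_lip_basic_infs}, Lemma \ref{lem:canonical_basis_exists}, and Corollaries \ref{cor:canonical_mat_prod_iden_diff_ip_bnd} and \ref{cor:canonical_mat_prod_iden_diff_v_bnd}, and you have filled in exactly the chain it intends, including the one detail the paper leaves implicit (bounding $\norm{\xdot{c}{i+1}-\xdot{\hat{c}}{i+1}}$ via the formula for the computed directions and the Lipschitz bound on $\phi_1$ from Theorem \ref{thm:phi_1_is_lipschitz}).
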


As in the proof of Theorem \ref{thm:opt_cost_local_trunc_err_higer_order}, we bound the differences between the computed partial derivatives of the optimal cost with respect to the two changes of coordinates, and then show that the order of this bound does not change after recovering the two sets of partial derivatives with respect to the original state space coordinates and compare their differences.
Lemma \ref{lem:lip_nonchar_prtl_deriv_diff_bnd} provides a bound on the difference between the characteristic partial derivatives, which are computed by inheritance.
\begin{lemma}
\label{lem:lip_nonchar_prtl_deriv_diff_bnd}
Suppose the orthogonal matrices $\tilde{V}$ and $\hat{V}$ from the changes of coordinates in \eqref{eqn:lip_tilde_chng_var} and \eqref{eqn:lip_hat_chng_var} are the ones from Corollary \ref{cor:lip_cnvtn_mat_id_diff_bnd}. 
If the optimal cost $\pi$ is a strict Lyapunov function on the computational domain $\compdom$, then there exists a Lipschitz constant $\Linh' < \infty$,  $\Mmax > 0$ and a maximum consecutive patch point distance $h_2>0$ such that the partial derivatives computed by inheritance satisfy
\begin{gather*}
\Bigabs{
\pderivelip{\tildepi^{i+1}}{k}{\tildexi_{j_1}}{}{\tildexi_{j_k}}{}(0)
-
\pderivelip{\hatpi^{i+1}}{k}{\hatxi_{j_1}}{}{\hatxi_{j_k}}{}(0)
}
\le
\Linh' \norm{x^{i+1}-x^i}^{d+2-k}
\intertext{whenever}
\notag
\norm{x^{i+1} - x^i} \le h_2
\text{ and } 
\norm{C_j^i - \hat{C}_j^i} \le M \norm{x^{i+1} - x^i}^{d+2-j}
\text{ for } 0 \le j \le d+1
\end{gather*}

\begin{proof}
$P$ is linear in its first argument, and $\kronderivelip{x}{x}$ is linear, so by Lemma \ref{lem:poly_lip}
\begin{multline*}
\Bignorm{
\kronderivelip{x}{x} [P(C^i,x-x^i)]_{x=x^{i+1}}
\\
- \kronderivelip{x}{x} [P(\hat{C}^i,x-x^i)]_{x=x^{i+1}} 
} \\
\begin{aligned}
&=
\Bignorm{ \kronderivelip{x}{x} [P(C^i-\hat{C}^i,x)]_{x=x^{i+1}-x^i} } \\
&\le
n^{k/2} \Linh M \norm{x^{i+1} - x^i}^{d+2-k}
\end{aligned}
\end{multline*}
where $\kronderivelip{x}{x}$ is repeated $k$ times.

It follows from the local truncation error result of Theorems \ref{thm:phi1_lte_bound} and \ref{thm:opt_cost_local_trunc_err_higer_order} that there is some $T < \infty$ such that 
\begin{multline*}
\Bignorm{\kronderivelip{x}{x} [P(C^i,x-x^i) - \pi(x) + \pi(x)]_{x=x^{i+1}}} \\
\le  T \norm{x^{i+1}-x^i}^{d+2-k} + \sup_{\closedcompdomwoalb} \Bignorm{\kronderivelip{x}{x} [\pi(x)]_{x=x^{i+1}}}
\end{multline*}
where the supremum is finite since $\pi$ is smooth by assumption on the compact computational domain.
It follows from Corollary \ref{cor:lip_cnvtn_mat_id_diff_bnd} and Lemma \ref{lem:kron_prd_id_diff} that
\begin{equation*}
\Bignorm{\kronprodelip{\tilde{V} \hat{V}^T}{\tilde{V} \hat{V}^T} - I} \le (2^k-1) L_1' M \norm{x^{i+1} -x^i}^{d+1}
\end{equation*}
The conclusion of the lemma follows by applying the previous three inequalities to the following bound on the difference between the two computed characteristic partial derivatives of the optimal cost.
\begin{multline*}
\Bigabs{
\pderivelip{\tildepi^{i+1}}{k}{\tildexi_{j_1}}{}{\tildexi_{j_k}}{}(0)
-
\pderivelip{\hatpi^{i+1}}{k}{\hatxi_{j_1}}{}{\hatxi_{j_k}}{}(0)
} \\
\begin{aligned}
&=
\Bigabs{
\kronderivelip{x}{x} [P(C^i,x)]_{x=x^{i+1}} \kronprodelip{\tilde{V}^{j_1}}{\tilde{V}^{j_k}}
\\
&\phantom{= \|}
- 
\kronderivelip{x}{x} [P(\hat{C}^i,x)]_{x=x^{i+1}} \kronprodelip{\hat{V}^{j_1}}{\hat{V}^{j_k}} 
} \\
&\le
\Bignorm{
\kronderivelip{x}{x} [P(C^i - \hat{C}^i,x)]_{x=x^{i+1}} }\\
&\phantom{= \|}
+\Bignorm{\kronderivelip{x}{x} [P(C^i,x)]_{x=x^{i+1}} \bigpars{\kronprodelip{\tilde{V} \hat{V}^T}{\tilde{V} \hat{V}^T} - I}
} \\
\end{aligned}
\end{multline*}
\end{proof}
\end{lemma}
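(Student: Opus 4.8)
The plan is to express each inherited partial derivative as a Kronecker derivative of the inherited polynomial contracted against a column of the relevant rotation matrix, then split the difference into a \emph{coefficient} term and a \emph{rotation} term and bound each separately. Since both $\pderivelip{\tildepi^{i+1}}{k}{\tildexi_{j_1}}{}{\tildexi_{j_k}}{}(0)$ and $\pderivelip{\hatpi^{i+1}}{k}{\hatxi_{j_1}}{}{\hatxi_{j_k}}{}(0)$ are computed by inheritance, each equals the $k$-fold Kronecker derivative of the inherited polynomial $P$ evaluated at $x^{i+1}$, contracted against the appropriate Kronecker product of columns of $\tilde{V}$ or $\hat{V}$: the first is $\kronderivelip{x}{x}[P(C^i,x-x^i)]_{x=x^{i+1}}(\kronprodelip{\tilde{V}^{j_1}}{\tilde{V}^{j_k}})$ and the second is the analogous expression built from $\hat{C}^i$ and $\hat{V}$. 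Since $\hat{V}$ is orthogonal, each column obeys $\tilde{V}^{j_\ell}=(\tilde{V}\hat{V}^T)\hat{V}^{j_\ell}$, so I can rewrite the first term with a factor $\kronprodelip{\tilde{V}\hat{V}^T}{\tilde{V}\hat{V}^T}$ acting on $\kronprodelip{\hat{V}^{j_1}}{\hat{V}^{j_k}}$, and then add and subtract $\kronderivelip{x}{x}[P(C^i,x-x^i)]_{x=x^{i+1}}$ to decouple the two sources of discrepancy.

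After this decomposition, a single application of Cauchy--Schwarz (the contracted column vector $\kronprodelip{\hat{V}^{j_1}}{\hat{V}^{j_k}}$ has unit norm, being a column of an orthogonal matrix) bounds the difference by $\Bignorm{\kronderivelip{x}{x}[P(C^i-\hat{C}^i,x-x^i)]_{x=x^{i+1}}}$ plus $\Bignorm{\kronderivelip{x}{x}[P(C^i,x-x^i)]_{x=x^{i+1}}\bigpars{\kronprodelip{\tilde{V}\hat{V}^T}{\tilde{V}\hat{V}^T}-I}}$. The first (coefficient) term is handled directly by Lemma~\ref{lem:poly_lip}: by translation it equals $\kronderivelip{x}{x}[P(C^i-\hat{C}^i,x)]_{x=x^{i+1}-x^i}$, and linearity of $P$ and of the Kronecker derivative in the coefficient argument, together with the hypothesis $\norm{C_j^i-\hat{C}_j^i}\le M\norm{x^{i+1}-x^i}^{d+2-j}$, yields a bound of order $n^{k/2}\Linh M\norm{x^{i+1}-x^i}^{d+2-k}$, which is exactly the target order.

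For the rotation term I would again use Cauchy--Schwarz to factor it as the product of $\Bignorm{\kronderivelip{x}{x}[P(C^i,x-x^i)]_{x=x^{i+1}}}$ and $\Bignorm{\kronprodelip{\tilde{V}\hat{V}^T}{\tilde{V}\hat{V}^T}-I}$. The second factor is controlled by Corollary~\ref{cor:lip_cnvtn_mat_id_diff_bnd}, which gives $\norm{\hat{V}^T\tilde{V}-I}\le L'M\norm{x^{i+1}-x^i}^{d+1}$, followed by Lemma~\ref{lem:kron_prd_id_diff}, producing $(2^k-1)L'M\norm{x^{i+1}-x^i}^{d+1}$. The first factor must be shown to stay bounded by a constant: because $C^i$ holds the \emph{exact} derivatives of $\pi$ at $x^i$, $P(C^i,x-x^i)$ is the Taylor polynomial of $\pi$ centered at $x^i$, so writing $P(C^i,x-x^i)=\pi(x)-\bigpars{\pi(x)-P(C^i,x-x^i)}$ and applying the local-truncation estimates of Theorems~\ref{thm:phi1_lte_bound} and \ref{thm:opt_cost_local_trunc_err_higer_order} to the remainder, together with the smoothness of $\pi$ on the compact set $\closedcompdomwoalb$ to bound $\sup\Bignorm{\kronderivelip{x}{x}[\pi(x)]}$, shows this norm is at most a constant once $h_2$ is small.

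Combining the two estimates, the rotation term is of order $\norm{x^{i+1}-x^i}^{d+1}$; since $k\ge2$ gives $d+1\ge d+2-k$, this is dominated by $\norm{x^{i+1}-x^i}^{d+2-k}$ as soon as $h_2\le1$, so the full difference is $O(\norm{x^{i+1}-x^i}^{d+2-k})$, and choosing $\Linh'$ to absorb the constants (including the factor $M\le\Mmax$) completes the argument. The main obstacle is the clean decoupling of the coefficient and rotation contributions in the Kronecker algebra, and in particular verifying that the rotation-induced term---which naturally carries $\norm{x^{i+1}-x^i}^{d+1}$ rather than $\norm{x^{i+1}-x^i}^{d+2-k}$---is of no worse order than the target; the uniform boundedness of $\Bignorm{\kronderivelip{x}{x}[P(C^i,x-x^i)]_{x=x^{i+1}}}$, which rests on the earlier local-truncation results and the smoothness of $\pi$, is the one place where the prior machinery is genuinely indispensable.
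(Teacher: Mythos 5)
Your proposal is correct and follows essentially the same route as the paper's proof: the identical decomposition into a coefficient term (bounded via Lemma \ref{lem:poly_lip} and linearity of $P$ in its coefficient argument) and a rotation term (bounded via Corollary \ref{cor:lip_cnvtn_mat_id_diff_bnd}, Lemma \ref{lem:kron_prd_id_diff}, and uniform boundedness of the Kronecker derivative of $P(C^i,\cdot)$ obtained from the local truncation results and smoothness of $\pi$). Your explicit remark that the rotation term's order $\norm{x^{i+1}-x^i}^{d+1}$ dominates the target order $\norm{x^{i+1}-x^i}^{d+2-k}$ for $k \ge 2$ makes precise a step the paper leaves implicit, but the argument is the same.
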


We are now in a position to prove Theorem \ref{thm:phi_2_is_lipschitz}.
\begin{proof}[Proof of Theorem \ref{thm:phi_2_is_lipschitz}]
The proof of the theorem mirrors the proof of the local truncation error result in  Theorem \ref{thm:opt_cost_local_trunc_err_higer_order}. 
We may assume that the orthogonal matrices $\tilde{V}$ and $\hat{V}$ appearing in the changes of coordinates of \eqref{eqn:lip_tilde_chng_var} and \eqref{eqn:lip_hat_chng_var} are convenient, that is they are taken from Corollary \ref{cor:lip_cnvtn_mat_id_diff_bnd}.
Starting with the computed second order partial derivatives, we will show that the main bound \eqref{eqn:lip2_main_diff_bound} of the theorem holds, by bounding the individual entries of $\phi_k(C^i)-\phi_k(\hat{C}^i)$.
We do this by considering the cases of the computed characteristic and non characteristic partial derivatives of the optimal cost independently, and then proceed by induction for the higher order partial derivatives.

We denote the computed optimal cost polynomials computed from the coefficient sets $C^i$ and $\hat{C}^i$ and  centered at $x^{i+1}$  as $\pi_{c}^{i+1}$ and $\pi_{\hat{c}}^{i+1}$.
They are given by the formulas
\begin{align*}
\pi_{c}^{i+1}(x) \equiv P(\phi(C^i), x-x^{i+1})
& &
\pi_{\hat{c}}^{i+1}(x) \equiv P(\phi(\hat{C}^i), x-x^{i+1})
\end{align*}

At $x^{i+1}$, the difference between a $\ith{p}$ order partial derivative of the two computed optimal cost polynomials is bounded by
\begin{multline}
\label{eqn:lip2_pth_rcvrd_prtl_diff_bnd}
\Bigabs{
\pderivelip{\pi_{c}^{i+1}}{p}{x_{j_1}}{}{x_{j_p}}{}(x^{i+1})
-
\pderivelip{\pi_{\hat{c}}^{i+1}}{p}{x_{j_1}}{}{x_{j_p}}{}(x^{i+1})
}\\
\begin{aligned}
&\le
\Bignorm{
\kronderivelip{\tildexi}{\tildexi} [\tildepi^{i+1}(\tildexi)]_{\tildexi=0} 
}
\,
\Bignorm{
\kronprodelip{\tilde{V}\hat{V}^T}{\tilde{V}\hat{V}^T} - I
}
\\
&\phantom{\le \|}
+\Bignorm{
\kronderivelip{\tildexi}{\tildexi} [\tildepi^{i+1}(\tildexi)]_{\tildexi=0} 
-
\kronderivelip{\hatxi}{\hatxi} [\hatpi^{i+1}(\hatxi)]_{\hatxi=0} 
}
\end{aligned}
\end{multline}
The following two inequalities will imply that the first term on the right hand side of \eqref{eqn:lip2_pth_rcvrd_prtl_diff_bnd} has an upper bound that is proportional to $M \norm{x^{i+1} - x^i}^{d+2-k}$ for any $k \ge 2$.
Since $\tildepi^{i+1}$ is computed from the set of coefficients that are exact at the previous patch point, it follows from Theorem \ref{thm:opt_cost_local_trunc_err_higer_order} that there is some $T < \infty$ such that
\begin{equation}
\label{eqn:lip_kron_deriv_c_opt_cost_bnd}
\Bignorm{
\kronderivelip{\tildexi}{\tildexi} [\tildepi^{i+1}(\tildexi)]_{\tildexi=0}
}
\le 
\sup_{\closedcompdomwoalb}\Bignorm{\kronderivelip{x}{x} [\pi(x)]}
+ T \norm{x^{i+1}-x^i}^{d+2-k}
.
\end{equation}
By Corollary \ref{cor:lip_cnvtn_mat_id_diff_bnd} and Lemma \ref{lem:kron_prd_id_diff},
\begin{equation}
\label{eqn:lip_kron_tildev_hat_v_id_diff_bnd}
\Bignorm{
\kronprodelip{\tilde{V}\hat{V}^T} {\tilde{V}\hat{V}^T} - I
}
\le
(2^k-1) L_1' M \norm{x^{i+1}-x^i}^{d+1}
.
\end{equation}

We now prove that the second term on the right hand side of \eqref{eqn:lip2_pth_rcvrd_prtl_diff_bnd} has an upper bound that is proportional to $M \norm{x^{i+1}-x^i}^{d+1}$.
Each entry in
\begin{equation*}
\kronderivelip{\tildexi}{\tildexi} [\tildepi^{i+1}(\tildexi)]_{\tildexi=0} 
-
\kronderivelip{\hatxi}{\hatxi} [\hatpi^{i+1}(\hatxi)]_{\hatxi=0} 
\end{equation*}
is either the difference between characteristic or non characteristic partial derivatives.
In the case of the non characteristic partial derivatives, Lemma \ref{lem:lip_nonchar_prtl_deriv_diff_bnd} guarantees that the difference is at most
\begin{equation*}
\Bigabs{
\pderivelip{\tildepi^{i+1}}{k}{\tildexi_{j_1}}{}{\tildexi_{j_k}}{}(0)
-
\pderivelip{\hatpi^{i+1}}{k}{\hatxi_{j_1}}{}{\hatxi_{j_k}}{}(0)
}
\le
\Linh' M \norm{x^{i+1}-x^i}^{d+2-k}
\end{equation*}
so the non characteristic partials make a contribution to the right hand side of \eqref{eqn:lip2_pth_rcvrd_prtl_diff_bnd} that is at most $\Linh' M \norm{x^{i+1}-x^i}^{d+2-k}$.
We now consider the contribution to \eqref{eqn:lip2_pth_rcvrd_prtl_diff_bnd} from the characteristic partial derivatives.
We start with the second order partial derivatives and proceed by induction.
At $x^{i+1}$, a characteristic partial derivative of the optimal cost is calculated from the coefficients held in $C^i$ from the formula
\begin{multline}
\label{eqn:lip_2nd_ord_c_opt_cost_prtl}
\ppderiv{\tilde{\pi}}{2}{\xi_j}{}{\xi_1}{}(0)
=
-\frac{1}{\norm{\xdot{c}{i+1}}}
\Biggbracks{
\pderiv{\tildepi^{i+1}}{}{\tildexi}(0)
\Bigpars{
\pderiv{\tildef}{}{\tildexi_j}(0) + \pderiv{\tildeg}{}{\tildexi_j}(0) \tildekappa^{i+1}(0)
\\
+
\pderiv{\tildeq}{}{\tildexi_j}(0) + \frac{1}{2} \pderiv{\tilder}{}{\tildexi_j}(0) \tildekappa^{i+1}(0)^2
}
}
\end{multline}
and the corresponding characteristic partial derivative computed from the coefficients in $\hat{C}^i$ are computed by an analogous formula.
The task of showing that the difference between the second order computed characteristic partial derivatives makes no more than an $\mathcal{O}(M \norm{x^{i+1}-x^i}^d)$ contribution to the right hand side of \eqref{eqn:lip2_pth_rcvrd_prtl_diff_bnd} reduces to showing that the difference between the partial derivatives of the problem data functions with respect to the two changes of variables
\begin{align}
\label{eqn:lip_prob_data_chng_vars_diff}
\begin{aligned}
\pderiv{\tildef}{}{\tildexi}(0) &- \pderiv{\hatf}{}{\hat{\xi}}(0)
\\
\pderiv{\tildeq}{}{\tildexi}(0) &- \pderiv{\hatq}{}{\hat{\xi}}(0)
\end{aligned}
& &
\begin{aligned}
\pderiv{\tildeg}{}{\tildexi}(0) &- \pderiv{\hatg}{}{\hat{\xi}}(0) 
\\
\pderiv{\tilder}{}{\tildexi}(0) &- \pderiv{\hatr}{}{\hat{\xi}}(0) 
\end{aligned}
\end{align}
and the difference between each of the computed quantities
\begin{align}
\label{eqn:lip_soln_chng_vars_diff}
\pderiv{\tildepi^{i+1}}{}{\tildexi}(0) - \pderiv{\hatpi^{i+1}}{}{\hat{\xi}}(0)
& &
\tildekappa^{i+1}(0) - \hatkappa^{i+1}(0)
& &
\frac{1}{\norm{\xdot{c}{i+1}}} - \frac{1}{\norm{\xdot{\hat{c}}{i+1}}}
\end{align}
are all proportional to $M \norm{x^{i+1}-x^i}^{d}$ or higher.
In fact, they are all proportional to $M \norm{x^{i+1}-x^i}^{d+1}$.
Lemma \ref{lem:prtl_drv_chng_crds_diff} bounds the difference between the partial derivatives of the problem data functions in terms of $\hat{V}^T \tilde{V} - I$.
Corollary \ref{cor:lip_cnvtn_mat_id_diff_bnd} then implies that for some Lipschitz constant $L < \infty$
\begin{gather*}
\Bignorm{
\pderiv{\tildeg}{}{\tildexi}(0) - \pderiv{\hatg}{}{\hat{\xi}}(0) 
}
\le
L M \norm{x^{i+1} - x^i}^{d+1}
\end{gather*}
and and the same bound bounds the other differences in \eqref{eqn:lip_prob_data_chng_vars_diff}.

We now show that there is some Lipschitz constant $L>0$ such that each difference in \eqref{eqn:lip_soln_chng_vars_diff} is bounded by $L M \norm{x^{i+1}-x^i}^{d+1}$.
By Theorem \ref{thm:phi_1_is_lipschitz}, there exists an $L$ such that
\begin{equation*}
\Bignorm{
\pderiv{\pi_c^{i+1}}{}{x}(x^{i+1})
-
\pderiv{\pi_{\hat{c}}^{i+1}}{}{x}(x^{i+1})
}
\le
L M \norm{x^{i+1} - x^i}^{d+1}
\end{equation*}
and by the inequalities \eqref{eqn:lip_kron_deriv_c_opt_cost_bnd} and \eqref{eqn:lip_kron_tildev_hat_v_id_diff_bnd}, there is some other $L$
\begin{equation*}
\Bignorm{
\pderiv{\pi_c^{i+1}}{}{x}(x^{i+1}) ( \tilde{V} \hat{V}^T - I )
}
\le
L M \norm{x^{i+1} - x^i}^{d+1}
.
\end{equation*}
It then follows from
\begin{align*}
\Bignorm{
\pderiv{\tildepi^{i+1}}{}{\tildexi}(0)
-
\pderiv{\hatpi^{i+1}}{}{\hatxi}(0)
}
&=
\Bignorm{
\pderiv{\pi_c^{i+1}}{}{x}(x^{i+1}) \tilde{V}
-
\pderiv{\pi_{\hat{c}}^{i+1}}{}{x}(x^{i+1}) \hat{V}
} \\
&\le
\Bignorm{
\pderiv{\pi_c^{i+1}}{}{x}(x^{i+1})
-
\pderiv{\pi_{\hat{c}}^{i+1}}{}{x}(x^{i+1})
}
\\
&\phantom{\le \|}
+
\Bignorm{
\pderiv{\pi_c^{i+1}}{}{x}(x^{i+1}) ( \tilde{V} \hat{V}^T - I )
}
\end{align*}
that for some $L < \infty$,
\begin{equation}
\label{eqn:lip_1st_prtls_opt_cost_diff_bnd}
\Bignorm{
\pderiv{\tildepi^{i+1}}{}{\tildexi}(0)
-
\pderiv{\hatpi^{i+1}}{}{\hatxi}(0)
}
\le
L M \norm{x^{i+1} - x^i}^{d+1}
.
\end{equation}
The differences between the computed optimal controls is at most
\begin{align}
\begin{split}
\abs{
\tildekappa^{i+1}(0) - \hatkappa^{i+1}(0)
}
&=
\frac{1}{r(x^{i+1})}
\Bigabs{
\Bigpars{
\pderiv{\pi_c^{i+1}}{}{x}(x^{i+1}) -\pderiv{\pi_{\hat{c}}^{i+1}}{}{x}(x^{i+1})
}
g(x^{i+1})
} \\
&\le
\sup_{\closedcompdomwoalb} \frac{\norm{g(x)}}{r(x)}
\Bignorm{ \pderiv{\pi_c^{i+1}}{}{x}(x^{i+1}) -\pderiv{\pi_{\hat{c}}^{i+1}}{}{x}(x^{i+1}) } \\
&\le
\sup_{\closedcompdomwoalb} \frac{\norm{g(x)}}{r(x)} L M \norm{x^{i+1} - x^i}^{d+1}
\end{split}
\end{align}
Lemma \ref{lem:phi_2_lip_basic_infs} guarantees that for all $x^i$ and $x^{i+1}$ in $\closedcompdomwoalb$,
\begin{gather}
\inf_{\norm{x^{i+1}-x^i} \le h_2}
 \norm{\xdot{c}{i+1}}( \norm{\xdot{c}{i+1}} - \norm{\xdot{c}{i+1} - \xdot{\hat{c}}{i+1}} ) > 0
\intertext{whenever} \notag 
\norm{x^{i+1} - x^i} \le h_2
\text{ and } 
\norm{C_k^i - \hat{C}_k^i} \le M \norm{x^{i+1} - x^i}^{d+2-k}
\text{ for } 0 \le k \le d+1
\end{gather}
Therefore, for some $L$, the final difference in \eqref{eqn:lip_soln_chng_vars_diff} is bounded by
\begin{multline}
\Bigabs{\frac{1}{\norm{\xdot{c}{i+1}}} - \frac{1}{\norm{\xdot{\hat{c}}{i+1}}}}
\\
\begin{aligned}
&\le
\frac{\sup \norm{g(x)}^2/r(x)}
{\inf \bigpars{ \norm{\xdot{c}{i+1}}( \norm{\xdot{c}{i+1}} - \norm{\xdot{c}{i+1} - \xdot{\hat{c}}{i+1}} )}}
 \Bignorm{
\pderiv{\pi_c^{i+1}}{}{x}(x^{i+1}) -\pderiv{\pi_{\hat{c}}^{i+1}}{}{x}(x^{i+1})
} \\
&\le
L M \norm{x^{i+1} - x^i}^{d+1}
\end{aligned}
\end{multline}
where the supremum is taken over $\closedcompdomwoalb$, and the infimum is taken over all $x^i$ and $x^{i+1}$ in $\closedcompdomwoalb$ within a distance of $h_2$ of each other.
Their ratio is then absorbed into $L$.

The proof that the difference between every corresponding characteristic partial derivatives of order $k>2$ shares a single upper bound that is proportional to $M \norm{x^{i+1}-x^i}^{d+2-k}$ proceeds by induction and follows the same form as the $k=2$ case.

\end{proof}

\section{Numerical results}
\label{chap:nmrcl_rslts}
We tested the patchy algorithm on the following nonlinear test problem.
\begin{gather*}
\intertext{Find}
\pi(x^0) = \min_u \frac{1}{2} \int_0^{\infty} 
  \Bigl( 
     \sin^2(x_1) + \Bigl( x_2 - \frac{1}{3} x_1^3 \Bigr)^2 + u^2
  \Bigr) dt
\intertext{subject to the dynamics}
\begin{aligned}
\dot{x}_1 &= \Bigl( x_2 - \frac{1}{3} x_1^3 \Bigr) \sec(x_1) \\
\dot{x}_2 &= \Bigl( x_1^2 x_2 - \frac{1}{3} x_1^5 \Bigr) \sec(x_1) + u
\end{aligned}
\end{gather*}
For any integer $k$, the optimal dynamics have a rest point at $(x_1, x_2) = (k \pi, \tfrac{1}{3}(k \pi)^3)$, and the optimal cost is zero, and the presence of $\sec(x_1)$ makes $\dot{x}$ blow up near $x_1 = -\pi/2$ and $x_1 = \pi/2$.

Under a change of coordinates, the test problem is the linear-quadratic regulator problem
\begin{gather*}
\intertext{Find}
\pi(y^0) = \min_u \frac{1}{2} \int_0^{\infty} \bigl( y^T y + u^2 \bigr) dt
\intertext{subject to the dynamics}
\begin{aligned}
\dot{y}_1 &= y_2 \\
\dot{y}_2 &= u
\end{aligned}
\end{gather*}
and the change of variables is
\begin{align}
\label{eqn:test_prob_chng_vars}
\begin{split}
y_1
&= 
\sin(x_1)
\\
y_2 
&= 
x_2 - \frac{1}{3} x_1^3
.
\end{split}
\end{align}

We chose this nonlinear test problem because we can calculate the exact solution to the linear-quadratic regulator problem exactly by solving the associated algebraic Riccati matrix equation \cite[p. 252]{krener_navasca_patchy}.
We can then calculate the exact solution to the nonlinear test problem by applying the change of coordinates to the solution of the linear-quadratic regulator problem.
Doing so gives us the solution to the nonlinear optimal control problem
\begin{equation*}
\pi(x)
= 
\frac{\sqrt{3}}{2}
\Bigbracks{
\Bigpars{ \sin(x_1) + \bigpars{x_2 - \frac{1}{3}x_1^3}}^2 + \frac{2}{3} \bigpars{x_2 - \frac{1}{3}x_1^3}^2
}
\end{equation*}
The problem data functions $f$, $g$, $q$, and $r$ and the solution $\pi$ are all smooth on $\{ (x_1,x_2) \mspace{4 mu} \bigl\vert \mspace{4 mu} -\pi/2 < x_1 < \pi/2 , -\infty < x_2 < \infty \}$.
Furthermore, $\pi$ is a strict Lyapunov function on this set.
If we denote the change of variables \eqref{eqn:test_prob_chng_vars} $y = T(x)$, and if the matrix $\tpderiv{T}{}{x}(x)$ is invertible at $x$, then $\tpderiv{\pi}{}{x}(x) \dot{x}$ reduces to the following quadratic form in $T(x)$
\begin{equation*}
\pderiv{\pi}{}{x}(x) \dot{x}
=
T(x)^T 
\begin{bmatrix}
-1 & 0 \\
-\sqrt{3} & -2
\end{bmatrix}
T(x)
,
\end{equation*}
which is negative definite.
Therefore, the computed solutions $\pi^i$ obey the error bound of Theorem \ref{thm:cmptd_exct_err_bnd}.

We computed the patchy solution on 73 patches including the Al'brekht patch.
The maximum consecutive patch point distance was $h \approx .54$.
The degree of the computed optimal cost and optimal control polynomials was four and three.
To compute the absolute error, we created a $100 \times 100$ grid of equally spaced points in the square $-1 \le x_1, x_2 \le 1$, and then computed the difference between the exact and computed optimal cost at each grid point that fell inside a patch.
The maximum absolute error at a grid point was approximately $5 \times 10^{-3}$.
The theoretical absolute error bound grows along a sequence of consecutive patch points as $K(\tfrac{L^i-1}{L-1}+1)h^5 \approx K(\tfrac{L^i-1}{L-1}+1)(4.6 \times 10^{-2})$.
The computed and exact solutions are displayed in Figure \ref{fig:cmptd_exct_soln}.
The absolute error is displayed in Figure \ref{fig:abs_err}.

To verify the validity of the analytical error bound, we computed the patchy solution on five concentric level sets of the computed cost.
We doubled the number of patch points on each level set, starting with eight points on the boundary of the Al'brekht patch.
We then computed the absolute error of optimal cost at each patch point and then found the sequence of patch points that terminates with the greatest absolute error.
The level sets of the optimal cost behave badly near the singularities at $x_1 = \pm \pi/2$,
which limited the size of the computational domain for this test problem.
In this case, $h \approx .5$, so the absolute error bound grows as $K(\tfrac{L^i-1}{L-1}+1)h^5 \approx K(\tfrac{L^i-1}{L-1}+1)(3.1 \times 10^{-2})$.
The absolute error and the sequence of patch points yielding the worst case error are shown in Figure \ref{fig:err_growth}.

\begin{figure}[hb]
\fbox{\subfloat[Patchy Optimal Cost]{\includegraphics[scale=.5, angle=270]{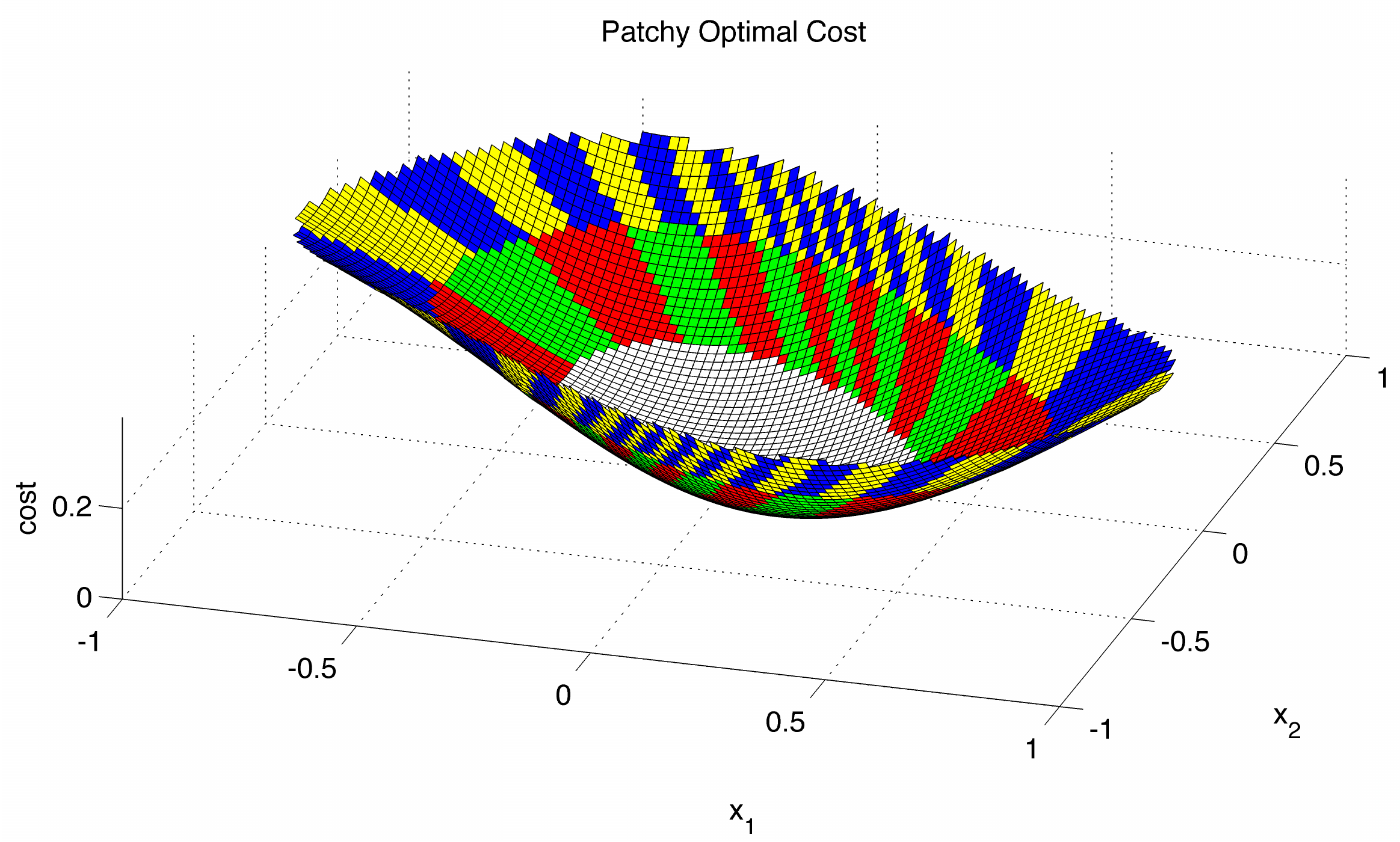}}}
\fbox{\subfloat[Exact Optimal Cost]{\includegraphics[scale=.5, angle=270]{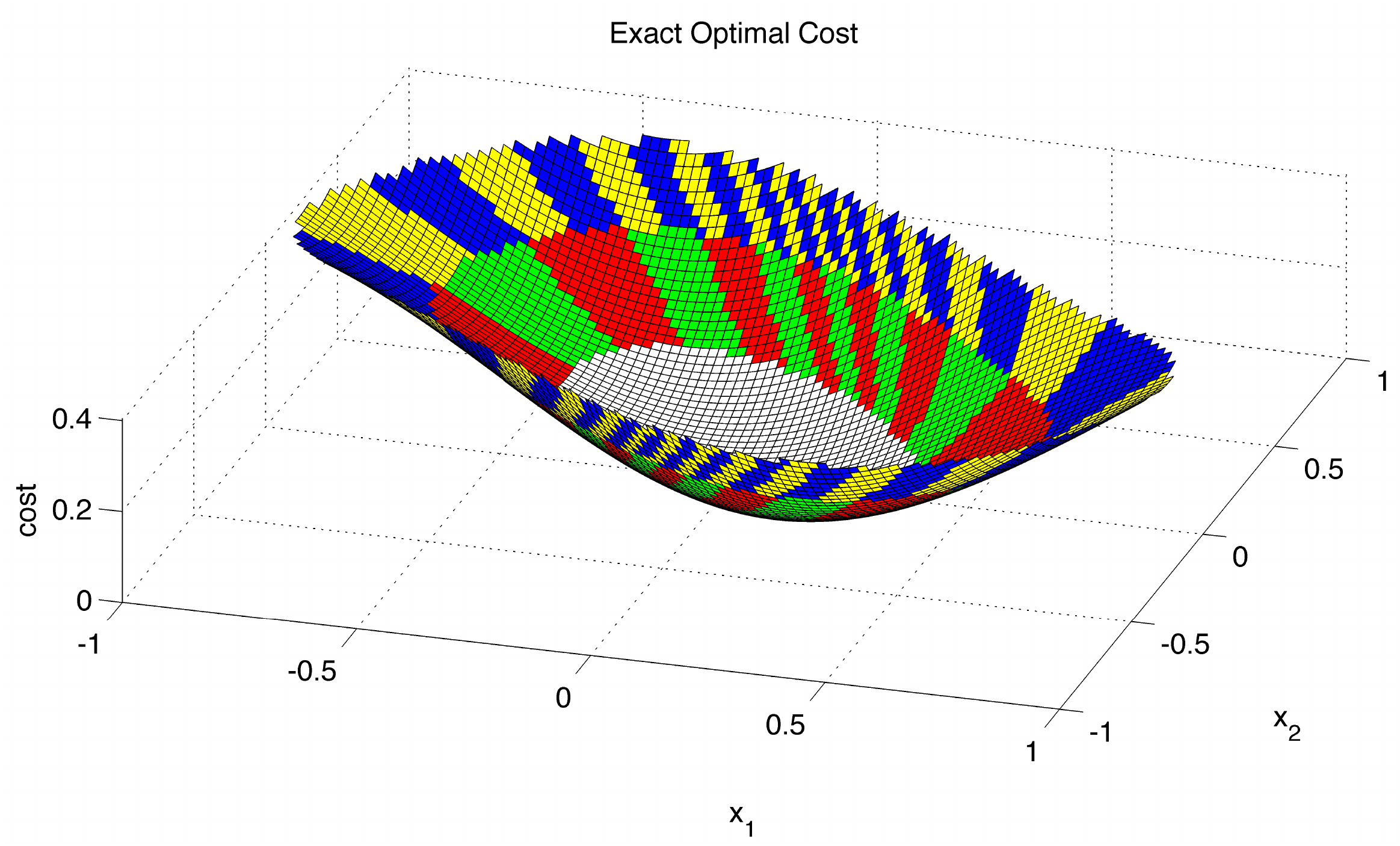}}}
\caption{Patchy and exact optimal costs}
\label{fig:cmptd_exct_soln}
\end{figure}

\begin{figure}[ht]
\centering
\fbox{\includegraphics[scale=.7, angle=270]{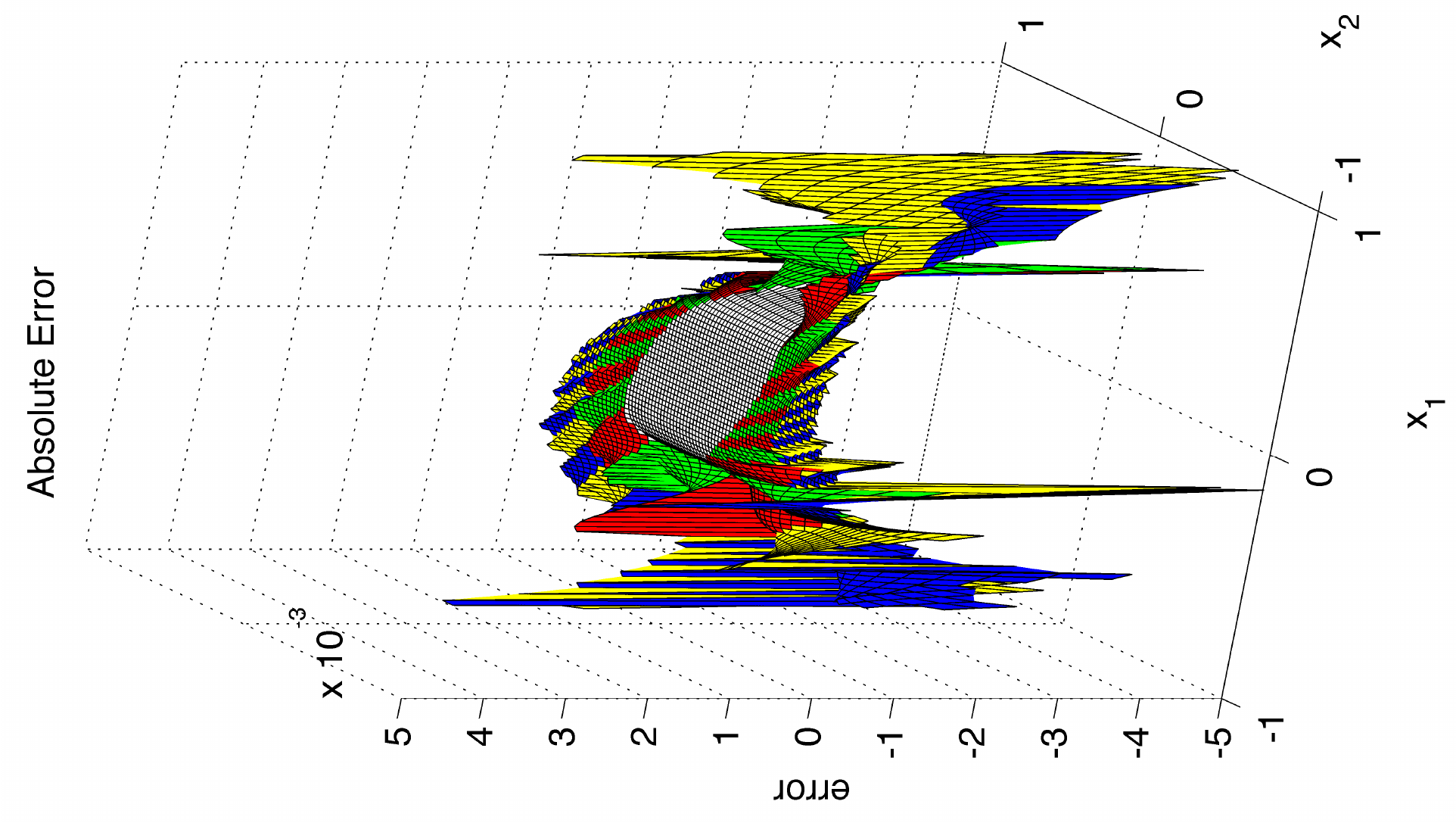}}
\caption{Absolute Error $\pi(x) - \pi^i(x)$}
\label{fig:abs_err}
\end{figure}

\begin{figure}[hb]
\fbox{\subfloat[Optimal cost absolute error at sequence of consecutive patch points.]{\includegraphics[width=4.5in]{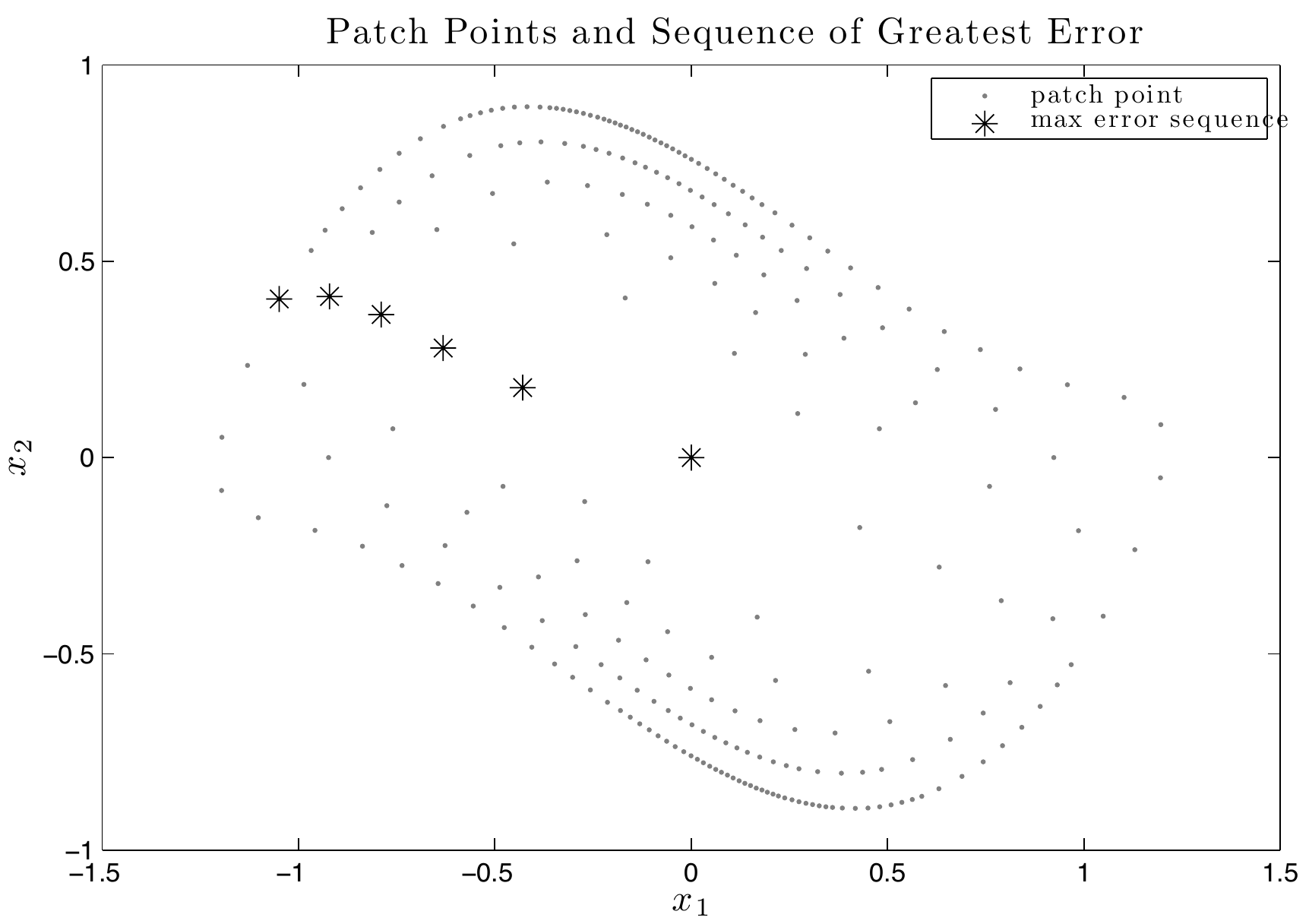}}}
\\
\fbox{\subfloat[Patch points and patch point sequence of greatest error]{\includegraphics[width=4.5in]{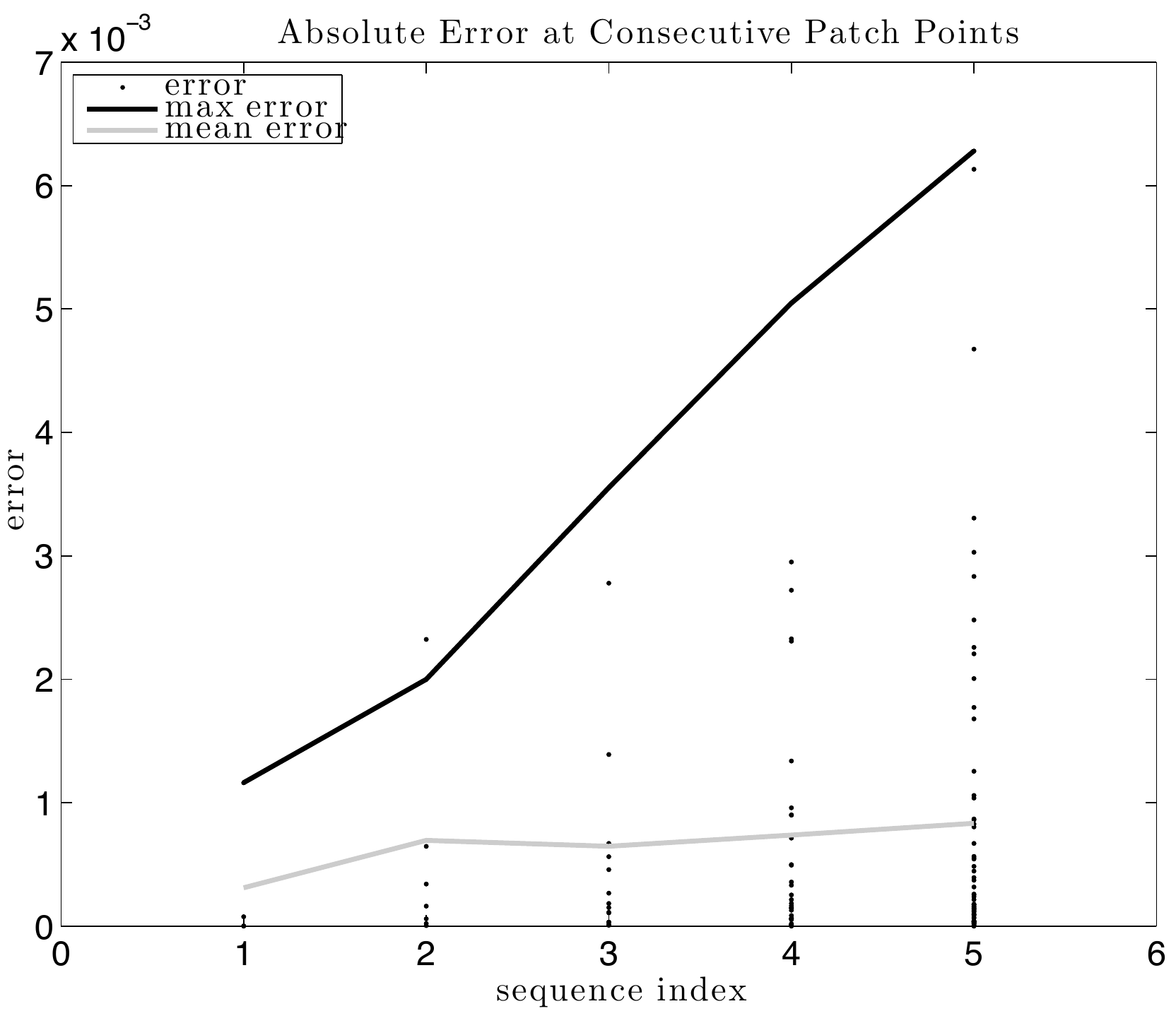}}}
\caption{Optimal cost absolute error at sequences of patch points. $h \approx .5$}
\label{fig:err_growth}
\end{figure}

\clearpage

\section{Appendix}

\subsection{Kronecker derivative notation}
\label{sec:kron_diff_op}
\begin{definition}
\label{defn:kron_diff_op}
Let $s : \mathbb{R}^n \rightarrow \mathbb{R}^{1 \times m}$ %
\begin{equation*}
s(x)
=
\begin{bmatrix}
s_1(x) & s_2(x) & \cdots & s_m(x)
\end{bmatrix}
,
\end{equation*}
The differential operator is $\tpderiv{}{}{x} \otimes s(x)$ is shorthand for the $1 \times mn$ row vector
\begin{equation*}
\pderiv{}{}{x} \otimes s(x)
\equiv 
\begin{bmatrix}
\pderiv{s}{}{x_1}(x) & \pderiv{s}{}{x_2}(x) & 
\cdots 
& \pderiv{s}{}{x_n}(x)
\end{bmatrix}
\end{equation*}
The higher order operator $\tpderiv{}{}{x} \otimes \cdots \otimes \tpderiv{}{}{x}$ is defined recursively in the natural way.
In the special case where $s : \mathbb{R}^2 \rightarrow \mathbb{R}$, then $\tpderiv{s}{}{x}  : \mathbb{R}^2 \rightarrow \mathbb{R}^{1 \times 2} $, and
\begin{align*}
 \pderiv{}{}{x} [s(x)]
&=
\begin{bmatrix}
\pderiv{s}{}{x_1}(x) & 
\pderiv{s}{}{x_2}(x) 
\end{bmatrix}
\\
\pderiv{}{}{x} \otimes \pderiv{}{}{x} [s(x)]
&\equiv 
\begin{bmatrix}
\pderiv{s}{2}{x_1}(x) & 
\ppderiv{s}{2}{x_1}{}{x_2}{}(x) & 
\ppderiv{s}{2}{x_2}{}{x_1}{}(x) &
\pderiv{s}{2}{x_2}(x) 
\end{bmatrix}
\\
\pderiv{}{}{x} \otimes \pderiv{}{}{x} \otimes \pderiv{}{}{x} [s(x)]
&\equiv 
\begin{matrix}
\Bigl[ 
\pderiv{s}{3}{x_1}(x) & 
\ppderiv{s}{3}{x_1}{2}{x_2}{}(x) & 
\ppderiv{s}{3}{x_1}{2}{x_2}{}(x) & 
\ppderiv{s}{3}{x_1}{}{x_2}{2}(x) & 
\cdots \\
\mspace{5 mu}\ppderiv{s}{3}{x_1}{2}{x_2}{}(x) & 
\ppderiv{s}{3}{x_1}{}{x_2}{2}(x) & 
\ppderiv{s}{3}{x_1}{}{x_2}{2}(x) & 
\ppderiv{s}{3}{x_2}{}{x_1}{}(x) &
\cdots \\
\pderiv{s}{3}{x_2}(x) 
\Bigr]
\end{matrix}
\\
&\mspace{11 mu} \vdots
\end{align*}
\end{definition}

\subsection{Partial derivative formulas for the optimal control and characteristic partial derivatives of the optimal cost} \label{sec:deriv_formulas}
\begin{align*}
\pppderiv{\hat{\pi}^{i+1}}{3}{\hatxi_{j_1}}{}{\hatxi_{j_2}}{}{\hatxi_1}{}(0)
=
-\frac{1}{\norm{\dot{x}^{i+1}}}
\Biggl[
&
\ppderiv{\hat{\pi}^{i+1}}{2}{\hatxi_{j_1}}{}{\hatxi}{}(0)
\Bigpars{
\pderiv{\hat{f}}{}{\hatxi_{j_2}}(0) + \pderiv{\hat{g}}{}{\hatxi_{j_2}}(0) \hat{\kappa}^{i+1}(0)}
\\
&
\begin{aligned}
&%
+ \ppderiv{\hat{\pi}^{i+1}}{2}{\hatxi_{j_2}}{}{\hatxi}{}(0)
\Bigpars{
\pderiv{\hat{f}}{}{\hatxi_{j_1}}(0) + \pderiv{\hat{g}}{}{\hatxi_{j_1}}(0) \hat{\kappa}^{i+1}(0)
}
\\
&
+ \pderiv{\hatpi^{i+1}}{}{\hatxi}(0)
\Bigpars{
\ppderiv{\hatf}{2}{\hatxi_{j_1}}{}{\hatxi_{j_2}}{}(0) + \ppderiv{\hatg}{2}{\hatxi_{j_1}}{}{\hatxi_{j_2}}{}(0) \hatkappa^{i+1}(0)
}
\\
&
+ \ppderiv{\hatq}{2}{\hatxi_{j_1}}{}{\hatxi_{j_2}}{}(0) 
+ \frac{1}{2} \ppderiv{\hatr}{2}{\hatxi_{j_1}}{}{\hatxi_{j_2}}{}(0) \hatkappa^{i+1}(0)^2
\\
&
- \pderiv{\hatkappa}{}{\hatxi_{j_1}}(0) \pderiv{\hatkappa}{}{\hatxi_{j_2}}(0)
\Biggr]
\end{aligned}
\end{align*}

\begin{align*}
\ppderiv{\hatkappa^{i+1}}{2}{\hatxi_{j_1}}{}{\hatxi_{j_2}}{}(0)
&=
-\frac{1}{\hatr(0)}
\Biggbracks{
\pppderiv{\hatpi^{i+1}}{3}{\hatxi_{j_1}}{}{\hatxi_{j_2}}{}{\hatxi}{}(0) \hatg(0)
+ \ppderiv{\hatpi^{i+1}}{2}{\hatxi_{j_1}}{}{\hatxi}{}(0) \pderiv{\hatg}{}{\hatxi_{j_2}}(0)
\\
&\phantom{=-\frac{1}{\norm{\dot{x}^{i+1}}} \Biggl[ \;} 
+ \ppderiv{\hatpi^{i+1}}{2}{\hatxi_{j_2}}{}{\hatxi}{}(0) \pderiv{\hatg}{}{\hatxi_{j_1}}(0)
\\
&\phantom{=-\frac{1}{\norm{\dot{x}^{i+1}}} \Biggl[ \;} 
+ \pderiv{\hatpi^{i+1}}{}{\hatxi}(0) \ppderiv{\hatg}{2}{\hatxi_{j_1}}{}{\hatxi_{j_2}}{}(0)
+ \ppderiv{r}{2}{\hatxi_{j_1}}{}{\hatxi_{j_2}}{}(0) \hatkappa^{i+1}(0)
\\
&\phantom{=-\frac{1}{\norm{\dot{x}^{i+1}}} \Biggl[ \;} 
+ \pderiv{\hatr}{}{\hatxi_{j_1}}(0) \hatkappa^{i+1}(0)
\\
&\phantom{=-\frac{1}{\norm{\dot{x}^{i+1}}} \Biggl[ \;} 
+ \pderiv{\hatr}{}{\hatxi_{j_1}}(0) \pderiv{\hatkappa^{i+1}}{}{\hatxi_{j_2}}(0)
+ \pderiv{\hatr}{}{\hatxi_{j_2}}(0) \pderiv{\hatkappa^{i+1}}{}{\hatxi_{j_1}}(0)
}
\end{align*}
\begin{multline*}
\frac{\partial^4 \hat{\pi}^{i+1}}{\partial \hatxi_{j_1} \partial \hatxi_{j_2} \partial \hatxi_{j_3} \partial \hatxi_1}(0)
= \\
\begin{aligned}
-\frac{1}{\norm{\dot{x}^{i+1}}}
\Biggl[
&
\pppderiv{\hat{\pi}^{i+1}}{3}{\hatxi_{j_1}}{}{\hatxi_{j_2}}{}{\hatxi}{}(0)
\Bigpars{
\pderiv{\hatf}{}{\hatxi_{j_3}}(0) + \pderiv{\hatg}{}{\hatxi_{j_3}}(0) \hatkappa^{i+1}(0)
} \\
&
+
\pppderiv{\hat{\pi}^{i+1}}{3}{\hatxi_{j_1}}{}{\hatxi_{j_2}}{}{\hatxi}{}(0)
\Bigpars{
\pderiv{\hatf}{}{\hatxi_{j_3}}(0) + \pderiv{\hatg}{}{\hatxi_{j_3}}(0) \hatkappa^{i+1}(0)
+\hatg(0) \pderiv{\hatkappa^{i+1}}{}{\hatxi_{j_2}}(0)
} \\
&
+
\ppderiv{\hat{\pi}^{i+1}}{2}{\hatxi_{j_1}}{}{\hatxi}{}(0)
\Bigpars{
\ppderiv{\hatf}{2}{\hatxi_{j_2}}{}{\hatxi_{j_3}}{}(0)
+\ppderiv{\hatg}{2}{\hatxi_{j_2}}{}{\hatxi_{j_3}}{}(0) \hatkappa^{i+1}(0)
+\pderiv{\hatg}{}{\hatxi_{j_3}}{}(0) \pderiv{\hatkappa^{i+1}}{}{\hatxi_{j_2}}{}(0) \\
& \phantom{\ppderiv{\hat{\pi}^{i+1}}{2}{\hatxi_{j_1}}{}{\hatxi}{}(0) \Bigl( \mspace{20 mu}}
+\hatg(0) \ppderiv{\hatkappa^{i+1}}{2}{\hatxi_{j_2}}{}{\hatxi_{j_3}}{}(0)
} \\
&
+
\ppderiv{\hat{\pi}^{i+1}}{2}{\hatxi_{j_3}}{}{\hatxi}{}(0)
\Bigpars{
\ppderiv{\hatf}{2}{\hatxi_{j_1}}{}{\hatxi_{j_2}}{}(0)
+\ppderiv{\hatg}{2}{\hatxi_{j_1}}{}{\hatxi_{j_2}}{}(0) \hatkappa^{i+1}(0)
} \\
&
+
\ppderiv{\hat{\pi}^{i+1}}{2}{\hatxi_{j_2}}{}{\hatxi}{}(0)
\Bigpars{
\ppderiv{\hatf}{2}{\hatxi_{j_1}}{}{\hatxi_{j_3}}{}(0)
+\ppderiv{\hatg}{2}{\hatxi_{j_1}}{}{\hatxi_{j_3}}{}(0) \hatkappa^{i+1}(0)
} \\
&
+
\pderiv{\hat{\pi}^{i+1}}{}{\hatxi}{}(0)
\Bigpars{
\pppderiv{\hatf}{3}{\hatxi_{j_1}}{}{\hatxi_{j_2}}{}{\hatxi_{j_3}}{}(0)
+\pppderiv{\hatg}{3}{\hatxi_{j_1}}{}{\hatxi_{j_2}}{}{\hatxi_{j_2}}{}(0) \hatkappa^{i+1}(0)
} \\
&
+ 
\pppderiv{\hatq}{3}{\hatxi_{j_1}}{}{\hatxi_{j_2}}{}{\hatxi_{j_3}}{}(0)
+ \frac{1}{2} \pppderiv{\hatr}{3}{\hatxi_{j_1}}{}{\hatxi_{j_2}}{}{\hatxi_{j_3}}{}(0) \bigpars{\hatkappa^{i+1}(0)}^2
\\
&
+
\ppderiv{\hatr}{2}{\hatxi_{j_1}}{}{\hatxi_{j_2}}{}(0) \hatkappa^{i+1}(0) \pderiv{\hatkappa^{i+1}}{}{\hatxi_{j_3}}{}(0)
+ \pderiv{\hatr}{}{\hatxi_{j_3}}{}(0) \pderiv{\hatkappa^{i+1}}{}{\hatxi_{j_1}}{}(0) \pderiv{\hatkappa^{i+1}}{}{\hatxi_{j_2}}{}(0) \\
&
-
\hatr(0) \ppderiv{\hatkappa^{i+1}}{2}{\hatxi_{j_1}}{}{\hatxi_{j_3}}{}(0) \pderiv{\hatkappa^{i+1}}{}{\hatxi_{j_2}}{}(0)
- \hatr(0) \pderiv{\hatkappa^{i+1}}{}{\hatxi_{j_1}}{}(0) \ppderiv{\hatkappa^{i+1}}{2}{\hatxi_{j_2}}{}{\hatxi_{j_3}}{}(0) \\
&
-
\Bigpars{
\pderiv{\hatr}{}{\hatxi_{j_2}}{}(0) \pderiv{\hatkappa^{i+1}}{}{\hatxi_{j_1}}{}(0)
+ \hatr(0) \ppderiv{\hatkappa^{i+1}}{2}{\hatxi_{j_1}}{}{\hatxi_{j_2}}{}(0) \pderiv{\kappa^{i+1}}{}{\hatxi_{j_3}}{}(0)
}
\Biggr]
\end{aligned}
\end{multline*}
\begin{multline*}
\pppderiv{\hatkappa^{i+1}}{3}{\hatxi_{j_1}}{}{\hatxi_{j_2}}{}{\hatxi_{j_3}}{}(0)
=
\\
\begin{aligned}
-\frac{1}{\hatr(0)}
\Biggl[
&
\frac{\partial^4 \hat{\pi}^{i+1}}{\partial \hatxi_{j_1} \partial \hatxi_{j_2} \partial \hatxi_{j_3} \partial \hatxi_{j_4}}(0) \hatg(0)
\\
&
+ 
\pppderiv{\hatpi^{i+1}}{3}{\hatxi_{j_1}}{}{\hatxi_{j_2}}{}{\hatxi}{}(0) \pderiv{\hatg}{}{\hatxi_{j_3}}(0)
+ \pppderiv{\hatpi^{i+1}}{3}{\hatxi_{j_1}}{}{\hatxi_{j_3}}{}{\hatxi}{}(0) \pderiv{\hatg}{}{\hatxi_{j_2}}(0)
\\
&
+ \ppderiv{\hatpi^{i+1}}{2}{\hatxi_{j_1}}{}{\hatxi}{}(0) \ppderiv{\hatg}{2}{\hatxi_{j_2}}{}{\hatxi_{j_3}}{}(0)
+ \ppderiv{\hatpi^{i+1}}{2}{\hatxi_{j_3}}{}{\hatxi}{}(0) \ppderiv{\hatg}{2}{\hatxi_{j_1}}{}{\hatxi_{j_2}}{}(0)
\\
&
+ \pderiv{\hatpi^{i+1}}{}{\hatxi}(0) \pppderiv{\hatg}{3}{\hatxi_{j_1}}{}{\hatxi_{j_2}}{}{\hatxi_{j_3}}{}(0) \\
&
+
\ppderiv{\hatr}{2}{\hatxi_{j_2}}{}{\hatxi_{j_3}}{}(0) \pderiv{\hatkappa^{i+1}}{}{\hatxi_{j_1}}(0)
+ \pderiv{\hatr}{}{\hatxi_{j_2}}(0) \ppderiv{\hatkappa^{i+1}}{2}{\hatxi_{j_2}}{}{\hatxi_{j_3}}{}(0) 
+ \pderiv{\hatr}{}{\hatxi_{j_3}}(0) \ppderiv{\hatkappa^{i+1}}{2}{\hatxi_{j_1}}{}{\hatxi_{j_2}}{}(0) 
\\
&
+ \hatr(0) \pppderiv{\hatkappa^{i+1}}{3}{\hatxi_{j_1}}{}{\hatxi_{j_2}}{}{\hatxi_{j_3}}{}(0)
\Biggr]
\end{aligned}
\end{multline*}

\clearpage

\bibliographystyle{siam}
\bibliography{continuous_time_patchy_references}

\end{document}